\newtheorem{theorem}{Theorem}[section]
\newtheorem{theoremletter}{Theorem}
\newtheorem{proposition}[theorem]{Proposition}
\newtheorem{corollary}[theorem]{Corollary}
\newtheorem{lemma}[theorem]{Lemma}
\newtheorem{conjecture}[theorem]{Conjecture}
\theoremstyle{definition}
\newtheorem{definition}[theorem]{Definition}
\newtheorem{example}[theorem]{Example}
\newtheorem{remark}[theorem]{Remark}
\newcommand{\CC}{\mathbb{C} }
\newcommand{\PP}{\mathbb{P} }
\newcommand{\QQ}{\mathbb{Q} }
\newcommand{\RR}{\mathbb{R} }
\newcommand{\ZZ}{\mathbb{Z} }
\newcommand{\cA}{\mathcal{A} }
\newcommand{\cE}{\mathcal{E} }
\newcommand{\cF}{\mathcal{F} }
\newcommand{\cH}{\mathcal{H} }
\newcommand{\cM}{\mathcal{M} }
\newcommand{\cO}{\mathcal{O} }
\newcommand{\cV}{\mathcal{V} }
\newcommand{\rD}{\mathrm{D} }
\newcommand{\rH}{\mathrm{H} }
\newcommand{\rM}{\mathrm{M} }
\newcommand{\rN}{\mathrm{N} }
\newcommand{\ba}{\mathbf{a} }
\newcommand{\be}{\mathbf{e} }
\newcommand{\bm}{\mathbf{m} }
\newcommand{\bn}{\mathbf{n} }
\newcommand{\bx}{\mathbf{x} }
\newcommand{\spHom}{\mathrm{SParHom}}
\def\rHom{\mathrm{Hom} }
\def\CParHom{\mathcal{P}ar\mathcal{H}om }
\def\SL{\mathrm{SL}}
\def\Gr{\mathrm{Gr}}
\def\Ext{\mathrm{Ext} }
\def\git{/\!/ }
\def\Pic{\mathrm{Pic} }
\def\rk{\mathrm{rank}\, }
\def\im{\mathrm{im}\, }
\def\Quot{\mathrm{Quot}}
\def\proj{\mathrm{Proj}\;}
\def\Eff{\mathrm{Eff}}
\def\pdeg{\mathrm{pardeg}}
\def\Det{\mathrm{Det}}
\def\wt{\mathrm{wt}}
\begin{document}

\title[Derived category and ACM bundles of moduli space]{Derived category and ACM bundles of \\ moduli space of vector bundles on a curve}
\date{\today}

\dedicatory{Dedicated to the memory of M. S. Narasimhan}

\author{Kyoung-Seog Lee}
\address{Kyoung-Seog Lee, Institute of the Mathematical Sciences of the Americas, University of Miami, 1365 Memorial Drive, Ungar 515, Coral Gables, FL 33146}
\email{kyoungseog02@gmail.com}

\author{Han-Bom Moon}
\address{Han-Bom Moon, Department of Mathematics, Fordham University, New York, NY 10023}
\email{hmoon8@fordham.edu}

\maketitle

\begin{abstract}
We show that the derived category of a curve is embedded into the derived category of the moduli space of vector bundles on the curve of coprime rank and degree. We also generalize the semiorthogonal decomposition constructed by Narasimhan and Belmans-Mukhopadhyay. Finally, we produce a one-dimensional family of ACM bundles over the moduli space. 
\end{abstract}

\section{Introduction}\label{sec:intro}

The purpose of this paper is to give a complete affirmative answer to two problems on the moduli space of vector bundles on a curve. One is the embedding problem between derived categories, and the other is the construction of nontrivial arithmetically Cohen-Macaulay (ACM) bundles on the moduli space. 

Let $X$ be a smooth projective curve of genus $g \ge 2$. Fix two positive integers $r, d$ such that $(r, d) = 1$ and $0 < d < r$, and fix $L \in \Pic^{d}(X)$. The moduli space $\rM(r, L)$ of rank $r$, determinant $L$ stable vector bundles on $X$ is an $(r^{2}-1)(g-1)$-dimensional smooth Fano variety of index two. Let $\cE$ be the normalized Poincar\'e bundle on $X \times \rM(r, L)$. 

\subsection{Embedding of derived category}

We study the Fourier-Mukai transform $\Phi_{\cE} : \rD^{b}(X) \to \rD^{b}(\rM(r, L))$ with the kernel $\cE$. Narasimhan proved that $\Phi_{\cE}$ is an embedding when $r = 2$ in \cite{Nar17, Nar18}, by studying the Hecke correspondence. Fonarev and Kuznetsov proved the same result for a general $X$ using different techniques \cite{FK18}. Belmans and Mukhopadhyay extended Narasimhan's method and proved the embedding result for $r \ge 2$, $d = 1$ and $g \ge r + 3$ in \cite{BM19}. In this paper, we lift all the assumptions on the rank, degree and genus by employing birational geometry of moduli spaces of parabolic bundles, and the theory of derived categories of variation of GIT quotients, developed by Halpern-Leistner in \cite{HL15} and Ballard, Favero and Katzarkov in \cite{BFK19}.

\begin{theoremletter}\label{thm:mainthmembedding}
The functor $\Phi_{\cE} : \rD^{b}(X) \to \rD^{b}(\rM(r, L))$ is fully faithful. 
\end{theoremletter}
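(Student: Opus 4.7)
The plan is to combine the parabolic bundle wall-crossing strategy of \cite{LM21} with the VGIT derived category machinery of Halpern-Leistner \cite{HL15} and Ballard-Favero-Katzarkov \cite{BFK19}, the latter allowing one to dispense with the genus bound $g \ge r + 3$ that appeared in the earlier geometric arguments.

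I would fix a point $x \in X$ and work on a moduli space $\rM^{\alpha}(r, L; x)$ of $\alpha$-stable parabolic bundles of rank $r$ and determinant $L$ carrying a full flag of quotients at $x$, where $\alpha$ is a tuple of weights. For a sufficiently small weight $\alpha_{0}$, parabolic stability coincides with stability of the underlying bundle, so the forgetful map $\rM^{\alpha_{0}}(r, L; x) \to \rM(r, L)$ is a full flag bundle and $\rD^{b}(\rM(r, L))$ sits inside $\rD^{b}(\rM^{\alpha_{0}}(r, L; x))$ as an Orlov block. Crossing finitely many walls in the space of parabolic weights then brings one to a chamber in which the forgetful morphism factors through a Hecke-type correspondence fibred over $X$; this is the geometric source of a copy of $\rD^{b}(X)$.

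At each wall I would realise $\rM^{\alpha_{+}}(r, L; x)$ and $\rM^{\alpha_{-}}(r, L; x)$ as GIT quotients of a common master parameter space and apply the magic window theorem of \cite{HL15, BFK19}, yielding a semiorthogonal decomposition
\[
\rD^{b}(\rM^{\alpha_{+}}(r, L; x)) = \langle \cA_{w}, \, \rD^{b}(\rM^{\alpha_{-}}(r, L; x)) \rangle
\]
(or its reverse, depending on whether the wall is balanced), whose extra block $\cA_{w}$ is controlled by the $T$-weights on the conormal bundle of the Kempf-Ness stratum that gets flipped. Arranging the walls in the right order, the cumulative effect is an embedding of $\rD^{b}(X)$ into $\rD^{b}(\rM(r, L))$.

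The final step is to identify this embedding with $\Phi_{\cE}$. Since every VGIT functor in the HL-BFK framework is itself a Fourier-Mukai transform, one can trace the universal parabolic sheaf through the chain of chambers and verify that its restriction to the stratum responsible for the $\rD^{b}(X)$-block is, up to a cohomological twist, the normalized Poincar\'e bundle $\cE$. I expect the main obstacle to be the weight bookkeeping at each wall: one must verify the window inequalities of \cite{HL15, BFK19} for every relevant Kempf-Ness one-parameter subgroup uniformly for $g \ge 2$, by computing the weights on the normal bundle and on the canonical bundle of each unstable stratum. This numerical verification is what should replace, and improve upon, the codimension-comparison arguments that forced $g \ge r + 3$ in \cite{BM19, LM21}.
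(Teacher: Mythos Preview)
Your proposed route is genuinely different from the paper's, and it contains a conceptual gap.

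The paper does not try to produce $\rD^{b}(X)$ as a block of a VGIT semiorthogonal decomposition. Instead it verifies the Bondal--Orlov criterion directly. Items (1) and (2) of that criterion (the conditions at $x_{1}=x_{2}$) were already handled in \cite{BM19,NR75}, so the only new input needed is
\[
\rH^{i}(\rM(r,L),\,\cE_{x_{1}}\otimes\cE_{x_{2}}^{*})=0\quad\text{for all }x_{1}\ne x_{2}\text{ and all }i.
\]
To compute this the paper passes to the moduli space $\rM(r,L,\be)$ of parabolic bundles with \emph{two} parabolic points $x_{1},x_{2}$ and multiplicities $(r-1,1)$; for small $\be$ this is $\PP(\cE_{x_{1}})\times_{\rM(r,L)}\PP(\cE_{x_{2}}^{*})$, and the group in question becomes $\rH^{i}(\rM(r,L,\be),\cO(1,1))$. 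One then runs the \emph{diagonal} wall-crossing $a_{1}=a_{2}$ and checks, for each wall $\Delta(s,e,(s,1))$, that the $\lambda_{-}$-weight of $\cO(1,1)$ (which equals $\chi$) lies in the quantization window $(-\eta_{+},\eta_{-})$; this is an elementary inequality valid for every $g\ge 2$. The quantization theorem (not the full window SOD) then identifies $\rH^{i}(\cO(1,1))$ on the small-weight model with the same group on a model where $\cO(1,1)\otimes\omega^{-1}=\cO(r+1,r+1)\otimes\Theta^{2}$ is nef and big, and Kawamata--Viehweg together with the observation that $\cO(1,1)\notin\Eff$ finishes.

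Your plan, by contrast, fixes a single point $x$ and works on a full-flag parabolic moduli $\rM^{\alpha}(r,L;x)$. The gap is this: for a fixed $x$, the wall-crossing centres that arise are fibre products (over a Jacobian) of lower-rank parabolic moduli spaces still attached to the same point $x$. None of them is fibred over $X$, so the HL--BFK blocks $\cA_{w}$ have no reason to contain $\rD^{b}(X)$. The curve only enters once the parabolic point is allowed to vary; this is exactly why the paper works with two distinct points (so that $\cO(1,1)$ already encodes $\cE_{x_{1}}\otimes\cE_{x_{2}}^{*}$), and why the Hecke-correspondence arguments in \cite{Nar17,BM19} take place over $X\times\rM(r,L)$. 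Your sentence ``brings one to a chamber in which the forgetful morphism factors through a Hecke-type correspondence fibred over $X$'' is the step that fails as written: no chamber of $\rM^{\alpha}(r,L;x)$ with $x$ fixed admits a fibration over $X$. Consequently the subsequent identification with $\Phi_{\cE}$ never gets off the ground, since at a fixed $x$ you only ever see $\cE_{x}$, not the family $\cE$.
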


\subsection{Semiorthogonal decomposition}

To put our results in context, we explain a brief history of the question. After establishing his embedding theorem, Narasimhan made the following conjecture, which was announced in \cite{Lee18}. In the paper, as numerical evidence, Lee proved that the motive of $\rM(r,L)$ admits a compatible motivic decomposition. Further evidence was also provided by G\'omez and Lee in \cite{GL20}. Belmans-Galkin-Mukhopadhyay also independently proposed the same conjecture in \cite{BGM18}, with another numerical evidence in \cite{BGM20}.

\begin{conjecture}\label{conj:Narasimhan}
The category $\rD^{b}(\rM(2,L))$ has a semiorthogonal decomposition
\[
	\rD^b(\rM(2,L)) = \langle \{\rD^b(X_{k}), \rD^{b}(X_{k})\}_{0 \le k \le g-2}, \rD^b(X_{g-1}) \rangle,
\]
where $X_{k} = X^{k}/S_{k}$ is the $k$-th symmetric product of $X$.
\end{conjecture}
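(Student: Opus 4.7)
The plan is to combine the Fourier--Mukai embedding of Theorem~A with iterated applications of the Hecke correspondence used by Narasimhan in \cite{Nar17, Nar18}, and then check completeness against the motivic decomposition from \cite{Lee18}.

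First, the plan is to realize each claimed summand as the image of an explicit Fourier--Mukai functor whose kernel is built from the normalized Poincar\'e bundle $\cE$. For $k=1$, the two copies of $\rD^{b}(X)$ arise as $\Phi_{\cE}$ itself and a twist of it by a line bundle pulled back from $\rM(2,L)$; both are fully faithful by Theorem~A together with the projection formula. For $k \ge 2$ I take kernels on $X_{k} \times \rM(2,L)$ obtained by first forming $\cE^{\boxtimes k}$ on $X^{k} \times \rM(2,L)$, twisting by an $S_{k}$-equivariant line bundle which reflects the tautological bundle on $X_{k}$, and then descending along the symmetrization map.

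Second, I would exploit the Hecke correspondence $H \xrightarrow{\pi_1} \rM(2,L)$ and $H \xrightarrow{\pi_2} X \times \rM(2, L(-x))$, in which each $\pi_{i}$ is a $\PP^{1}$-bundle, and apply Orlov's projective bundle formula in both directions. Comparing the two resulting two-step decompositions of $\rD^{b}(H)$ expresses $\rD^{b}(\rM(2,L))$ in terms of $\rD^{b}(X \times \rM(2, L(-x)))$ modulo an explicit $\rD^{b}(X)$-summand. Iterating and then passing to $S_{k}$-invariants with respect to the successive choices of Hecke points, the symmetric products $X_{k}$ emerge inductively as the natural invariant factors. This bootstraps the $r=2$ case of the wall-crossing arguments of \cite{LM21}.

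Third, semiorthogonality among the constructed subcategories reduces, via base change to $X^{k} \times \rM(2,L)$ and relative Serre duality on the Hecke $\PP^{1}$-bundles, to vanishing of $\Ext$-groups between certain tensor products of $\cE$; these vanishings should follow from the index-two Fano structure of $\rM(2,L)$ together with Theorem~A. Completeness of the decomposition would then be cross-checked against the motivic decomposition of $\rM(2,L)$ from \cite{Lee18}: the proposed right-hand side already has the correct class in the Grothendieck ring, so it suffices to verify that its smallest triangulated envelope is all of $\rD^{b}(\rM(2,L))$, which one can try to argue by exhibiting generating determinant line bundles arising from the constructed kernels.

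The main obstacle is the inductive step for $k$ close to $g-1$: the iterated Hecke construction degenerates on the diagonals of $X^{k}$, where the parabolic weight polytope used implicitly in \cite{LM21} meets a wall and the naive $\PP^{1}$-bundle structure of the correspondence breaks down. Controlling these degenerations and ensuring that the resulting sequence of mutations assembles into precisely the bracket in the statement, rather than an equivalent rearrangement, is what I expect to be the hardest combinatorial and geometric step, and it is exactly the place where the partial results of \cite{Nar17, BM19, LM21} stop short of the full conjecture.
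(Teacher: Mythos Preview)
The statement you are attempting to prove is Conjecture~\ref{conj:Narasimhan}; it is stated in the paper \emph{as a conjecture} and the paper does not claim to prove it. In fact the paper explicitly records that, after the work of Tevelev--Torres and Xu--Yau, the remaining open point is exactly whether the listed blocks generate all of $\rD^{b}(\rM(2,L))$. So there is no proof in the paper to compare against, and your proposal should be read as an outline toward an open problem rather than a reconstruction of something the paper establishes.

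On the substance of your outline, the first two parts (constructing the functors and checking semiorthogonality) are close in spirit to what is already in the literature you cite, and you correctly identify that the hard part is the last step. However, your proposed completeness argument has a genuine gap: agreement of classes in the Grothendieck ring of varieties (or of motives) does \emph{not} imply that a semiorthogonal collection generates the whole derived category. Phantom and quasi-phantom categories---admissible subcategories with trivial Hochschild homology, vanishing $K_{0}$, and trivial class in the Grothendieck ring---are known to exist, so ``the motivic class matches'' is simply insufficient to conclude that ${}^{\perp}\cA = 0$. Likewise, ``exhibiting generating determinant line bundles'' is circular unless you already know those bundles lie in the subcategory you have built, which is precisely the question. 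The successful approaches (e.g.\ Tevelev--Torres for the embeddings) rely on explicit resolutions or mutations, not on motivic bookkeeping, and even they do not settle generation.

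Finally, your inductive Hecke scheme has a structural problem you partly anticipate: iterating the correspondence does not produce $\rD^{b}(X_{k})$ by taking $S_{k}$-invariants in any straightforward way, because $\rD^{b}(X \times \rM(2,L(-x)))$ is not $\rD^{b}(X) \otimes \rD^{b}(\rM(2,L(-x)))$ in a sense that would let you peel off symmetric-product factors; the kernel of the would-be functor on $X_{k}$ must be built globally on $X_{k}\times\rM(2,L)$, and the diagonal degenerations you flag are not merely combinatorial but change the rank of the relevant Ext-bundles. This is exactly where all existing partial results stop, and your sketch does not add a new idea to get past it.
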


Toward the proof of Conjecture \ref{conj:Narasimhan}, Lee and Narasimhan showed that, by analyzing the Hecke correspondence, $\rD^{b}(X_{2})$ is embedded \cite{LN21} when $X$ is non-hyperelliptic and $g \geq 16$. After this, Tevelev-Torres and Xu-Yau showed that the above building blocks are embedded in $\rD^b(\rM(2, L))$, with entirely different approaches \cite{TT21, XY21}. At this point, the remaining part is whether these blocks span $\rD^{b}(\rM(2, L))$ or not. 

It is natural to guess the existence of a similar decomposition for $\rD^b(\rM(r,L))$ for general $r$ and $d$. Based on \cite{GL20}, where the motivic decomposition of $\rM(r, L)$ is studied, we expect the following statement. A more explicit version of the conjecture for $r = 3$ and its evidence can be found in \cite[Conjecture 1.9]{GL20}. 

\begin{conjecture}\label{conj:GomezLee}
The category $\rD^{b}(\rM(r,L))$ has a semi\-orthogonal decomposition where each indecomposable component is the derived category of products of $X_k$ and $\mathrm{Jac}(X)$. 
\end{conjecture}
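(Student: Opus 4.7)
The plan is to combine the embedding functor of Theorem~\ref{thm:mainthmembedding} with iterated wall-crossing on parabolic moduli spaces in order to produce a full semiorthogonal decomposition of $\rD^{b}(\rM(r, L))$, using the motivic decomposition of \cite{GL20} as a combinatorial blueprint for the expected pieces. Note that even for $r=2$ (Conjecture~\ref{conj:Narasimhan}) the analogous statement is still open, so what follows is a road map rather than a complete argument.

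First, for each summand of the form $\rD^{b}(\Sym^{k_{1}}X \times \cdots \times \Sym^{k_{s}}X \times \mathrm{Jac}(X)^{t})$ predicted by \cite{GL20}, construct a Fourier-Mukai kernel on its product with $\rM(r,L)$ out of universal parabolic bundles with carefully chosen weights, in the spirit of the wall-crossing constructions of \cite{LM21}. Fully faithfulness of each such Fourier-Mukai embedding should then follow by the same Halpern-Leistner / Ballard-Favero-Katzarkov window-subcategory mechanism that gives Theorem~\ref{thm:mainthmembedding}, since each kernel can be assembled by pulling back and symmetrizing the normalized Poincar\'e bundle $\cE$ and tensoring with appropriate Poincar\'e line bundles on the Jacobian factor.

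Next, establish mutual semiorthogonality. By adjunction and base change this reduces to a family of vanishing statements for $R\Gamma$ of tensor products of the relevant kernels on $\rM(r,L)$. These vanishings should be attacked by realizing the supports as incidence correspondences between symmetric products inside $\rM(r,L)$ and then combining the index-two Fano property of $\rM(r,L)$ with the Mumford relations that govern its cohomology; in favorable cases one can further invoke Kodaira-type vanishing on the ambient VGIT models.

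The final and decisive step --- showing that the images of these embeddings span $\rD^{b}(\rM(r,L))$ --- is the main obstacle and is where every prior approach has stalled. The plan would be to run a Thaddeus-style chain of VGIT wall-crossings through parabolic moduli spaces connecting $\rM(r,L)$ to a simpler birational model (for example, a projective or Grassmannian bundle over a product of Jacobians), apply the window package of \cite{HL15, BFK19} at each wall to peel off one new semiorthogonal summand, and then inductively identify the wall strata with products of symmetric products by matching their numerical invariants against the motivic decomposition of \cite{GL20}. The essential difficulty is that once $r \ge 3$ the wall strata cease to be projective bundles, so Orlov's projective-bundle formula no longer produces an immediate identification of the stratum with a symmetric-product derived category; bridging this gap requires a finer understanding of the local structure of these higher-rank Quot-type strata than is presently available, and will likely need an induction on $r$ that feeds lower-rank instances of Conjecture~\ref{conj:GomezLee} back into the wall-crossing machinery.
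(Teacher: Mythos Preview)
The statement you are attempting is Conjecture~\ref{conj:GomezLee}, and the paper does \emph{not} prove it. It is stated explicitly as an expectation (``we expect the following statement''), motivated by the motivic decomposition of \cite{GL20}; even the $r=2$ special case (Conjecture~\ref{conj:Narasimhan}) is recorded in the paper as open, with only the spanning step left after \cite{TT21, XY21}. So there is no proof in the paper to compare your proposal against.

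To your credit, you say as much yourself: you call the proposal ``a road map rather than a complete argument'' and flag the spanning step as the place where all known approaches stall. That is an accurate assessment of the state of the art, and the outline you give --- build kernels from symmetrized Poincar\'e bundles, prove full faithfulness via windows as in Theorem~\ref{thm:mainthmembedding}, reduce semiorthogonality to cohomology vanishing, and attempt spanning by a VGIT chain --- is a reasonable sketch of how one might attack the problem. But it is not a proof, and the specific obstacles you name (non--projective-bundle wall strata for $r\ge 3$, the need for an inductive input from lower rank) are genuine and unresolved. In particular, your second step (semiorthogonality ``should be attacked'' by Mumford relations and Kodaira-type vanishing) is not substantiated: already for the four-block decomposition of Theorem~\ref{thm:SOD} the paper needs a genus bound and Sommese vanishing, and there is no indication those techniques scale to the full conjectural list of pieces.

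In short: the paper offers no proof of this conjecture, your proposal is a plausible strategy rather than an argument, and the gap you identify in the final step is real and currently open.
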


As we can see from Brill-Noether theory, the geometry of curves is very complicated in general. However, in the study of $\rD^b(\rM(r, L))$, we expect a uniform decomposition, which does not depend on each curve.

Except for $g = r = 2$, it seems that $\rD^b(\rM(r,L))$ contains at least two copies of $\rD^b(\mathrm{pt})$ and two copies of $\rD^b(X)$. In \cite[Theorem B]{BM19}, the authors proved this is the case for $d = 1$ and (roughly) $g \ge 3r+4$. We extend this result for arbitrary coprime degree and give a constant genus bound. 

\begin{theoremletter}\label{thm:SOD}
If $g \ge 6$, there is a semiorthogonal decomposition
\[
	\rD^{b}(\rM(r, L)) = \langle \cA, {}^{\perp}\cA\rangle
\]
where $\cA = \langle \cO, \Phi_{\cE}(\rD^{b}(X)), \Theta, \Phi_{\cE}(\rD^{b}(X))\otimes \Theta\rangle$, where $\Theta$ is the ample generator of $\Pic(\rM(r, L))$. 
\end{theoremletter}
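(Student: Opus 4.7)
The argument splits naturally into two parts: (a) each of the four blocks $\cO$, $\Phi_{\cE}(\rD^{b}(X))$, $\Theta$, $\Phi_{\cE}(\rD^{b}(X))\otimes\Theta$ sits fully embedded in $\rD^{b}(\rM(r,L))$, and (b) the four blocks satisfy the required semiorthogonality in the listed order. Part (a) is immediate: $\cO$ and $\Theta$ are exceptional because $\rM(r,L)$ is a smooth simply connected Fano variety with $\rH^{*}(\rM,\cO) = \CC$, and the two Fourier--Mukai blocks are fully faithful by Theorem~\ref{thm:mainthmembedding} combined with the autoequivalence $-\otimes\Theta$. Admissibility of the resulting subcategory $\cA$, and hence the decomposition $\rD^{b}(\rM) = \langle \cA, {}^{\perp}\cA\rangle$, is then automatic on the smooth projective variety $\rM$.

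For part (b), tensoring with $\Theta^{-1}$ consolidates the six pairwise semiorthogonality conditions into five vanishings, valid for arbitrary $F, F' \in \rD^{b}(X)$: $R\Gamma(\rM,\Theta^{-1}) = 0$; $R\Gamma(\rM,\Phi_{\cE}(F)\otimes\Theta^{-1}) = 0$; $\rHom_{\rM}(\Phi_{\cE}(F),\cO) = 0$; $\rHom_{\rM}(\Phi_{\cE}(F),\Theta^{-1}) = 0$; and $\rHom_{\rM}(\Phi_{\cE}(F),\Phi_{\cE}(F')\otimes\Theta^{-1}) = 0$. The first is Kodaira vanishing, using $\omega_{\rM} = \Theta^{-2}$ and ampleness of $\Theta$. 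The three single-factor conditions translate, via the projection formula and the adjunction to the right adjoint $\Phi_{\cE}^{R}$ (whose kernel is $\cE^{\vee}\otimes\pi_{\rM}^{*}\omega_{\rM}[\dim\rM]$), into the vanishing on $X$ of derived pushforwards of twists of $\cE$ and $\cE^{\vee}$; by base change these reduce further to fiberwise vanishings of the form $R\Gamma(\rM,\cE_{x}\otimes\Theta^{-k})$ and $R\Gamma(\rM,\cE_{x}^{\vee}\otimes\omega_{\rM}\otimes\Theta^{-k})$ for the slice bundle $\cE_{x} = \cE|_{\{x\}\times\rM}$, and I would verify them by adapting the cohomological estimates of \cite{BM19} (established in the $d=1$ case) to arbitrary coprime $(r,d)$, aided by the wall-crossing techniques of \cite{LM21}.

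The remaining two-parameter vanishing is the heart of the argument. Via base change on $X\times X\times\rM$ it amounts to showing that $R(\pi_{12})_{*}(\pi_{13}^{*}\cE^{\vee}\otimes \pi_{23}^{*}\cE\otimes\pi_{3}^{*}(\omega_{\rM}\otimes\Theta^{-1}))$ vanishes on $X\times X$. Away from the diagonal $\Delta \subset X\times X$ the integrand splits as an external tensor product, and the vanishing then follows from the single-parameter statements above. On $\Delta$, however, the integrand becomes $\cEnd(\cE_{x})\otimes\omega_{\rM}\otimes\Theta^{-1}$, and the vanishing of its higher cohomology on $\rM$ is the main obstacle. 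The plan for this diagonal contribution is to combine the well-known decomposition of $\cEnd(\cE_{x})$ into the trace and trace-free parts with a Leray spectral sequence analysis of $\cE\boxtimes\cE^{\vee}$ on $X\times\rM$; the genus hypothesis $g\ge 6$ should emerge as the tightest numerical constraint across all five vanishings. The key improvement over the bound $g \ge 3r+4$ of \cite{BM19} is that Theorem~\ref{thm:mainthmembedding} supplies the fully faithful embedding without any structural restriction, leaving only the intrinsic numerical bounds from the vanishing statements, which are themselves independent of $r$.
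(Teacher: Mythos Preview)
Your overall scaffolding in part (a) and the reduction of the single-factor conditions in (b) to fiberwise vanishings are aligned with the paper's argument. The paper handles $\rH^{i}(\rM,\cE_{x}\otimes\Theta^{-1})$ and $\rH^{i}(\rM,\cE_{x}^{*})$ via the wall-crossing machinery culminating in Theorem~\ref{thm:vanishingoftwistedbundle}, together with the observation that $\cE_{x}^{*}\otimes\Theta$ is itself a normalized Poincar\'e bundle on $\rM(r,L^{*}(r))$; your plan there is on track, if not fully spelled out.

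The genuine gap is in your treatment of the two-parameter condition $\rHom_{\rM}(\Phi_{\cE}(F),\Phi_{\cE}(F')\otimes\Theta^{-1})=0$. Your claim that ``away from the diagonal the integrand splits as an external tensor product, and the vanishing then follows from the single-parameter statements'' is incorrect: the fiber of $R(\pi_{12})_{*}$ over $(x_{1},x_{2})$ with $x_{1}\ne x_{2}$ computes $\rH^{*}(\rM,\cE_{x_{1}}^{\vee}\otimes\cE_{x_{2}}\otimes\omega_{\rM}\otimes\Theta^{-1})$, where both $\cE_{x_{1}}$ and $\cE_{x_{2}}$ are rank-$r$ bundles on the \emph{same} copy of $\rM$. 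No K\"unneth-type splitting is available, and the single-factor vanishings say nothing about this rank-$r^{2}$ bundle. The paper in fact treats $x_{1}=x_{2}$ and $x_{1}\ne x_{2}$ uniformly in Theorem~\ref{thm:orthogonality}(3) and remarks explicitly that the wall-crossing/Kawamata--Viehweg method fails for this item, since $\cO(r+1,r+1)\otimes\Theta$ lies outside $\Eff(\rM(r,L,\ba))$.

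The paper's actual tool here is Sommese's vanishing theorem for $k$-ample bundles: one shows (Proposition~\ref{prop:kamplenessEtensorE}) that $\cE_{x_{1}}\otimes\cE_{x_{2}}^{*}\otimes\Theta$ is $(g-1)\ell(r-\ell)$-ample by analyzing the first wall of the one-point parabolic moduli space, and then Sommese vanishing together with Serre duality kills $\rH^{i}(\rM,\cE_{x_{1}}\otimes\cE_{x_{2}}^{*}\otimes\Theta^{-1})$ from both ends of the degree range. The bound $g\ge 6$ is exactly the inequality $\frac{2r^{2}-1}{r^{2}-1-2\ell(r-\ell)}\le g-1$ needed for the two ranges to overlap; it does not arise from any trace/trace-free or Leray-type analysis of $\cEnd(\cE_{x})$.
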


We expect that Theorem \ref{thm:SOD} is true without the genus restriction (except $g = r = 2$ case), but we have only partial result for $g \le 5$ (Theorem \ref{thm:threefactors} and Remark \ref{rmk:smallgenus}). 

\subsection{ACM bundles}

We also discover a family of ACM bundles on $\rM(r, L)$. For an $n$-dimensional projective variety $V$ with an ample line bundle $A$, a vector bundle $F$ is called \emph{ACM} if $\rH^{i}(V, F \otimes A^{j}) = 0$ for all $j \in \ZZ$ and $0 < i < n$. An ACM bundle $F$ is \emph{Ulrich} if $\rH^{0}(V, F \otimes A^{-1}) = 0$ and $\rH^{0}(V, F) = \mathrm{rank}\; F \cdot \deg V$. ACM bundles naturally appear in matrix factorization \cite{Eis80} and correspond to maximal Cohen-Macaulay modules in commutative algebra \cite{Yos90}. Ulrich bundles enable us to compute their associated Chow forms, and Eisenbud and Schreyer conjectured that every projective variety admits an Ulrich sheaf \cite{ES03}. However, since the above strong cohomology vanishing is difficult to expect and hard to verify, despite many works (see \cite{Bea18, CMRPL21, Fae13} and references therein), very few general results are known for higher dimensional varieties, even for the existence of ACM bundles except some trivial examples. 

\begin{theoremletter}\label{thm:ACM}
The restricted Poincar\'e bundle $\cE_x$ is ACM with respect to $\Theta$. Thus, there is a one-dimensional family of ACM bundles on $\rM(r, L)$, parametrized by $X$.
\end{theoremletter}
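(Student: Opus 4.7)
ACM with respect to $\Theta$ amounts to $H^{i}(\rM(r,L), \cE_{x} \otimes \Theta^{m}) = 0$ for all $0 < i < N := (r^{2}-1)(g-1)$ and $m \in \ZZ$. My plan mirrors the strategy of Theorems A and B: combine the parabolic wall-crossing at $x$ with the window theorem of Halpern-Leistner and Ballard-Favero-Katzarkov to transport the cohomology computation to a simpler birational model. A first reduction, using Serre vanishing applied to $\cE_{x} \otimes \Theta^{m}$ for $m \gg 0$ and, via Serre duality together with $\omega_{\rM(r,L)} = \Theta^{-2}$, to $\cE_{x}^{\vee} \otimes \Theta^{-m-2}$ for $m \ll 0$, leaves only a bounded set of critical twists $m$ to check.

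To handle this finite list, consider parabolic moduli $\rM^{\alpha^{\pm}}$ for parabolic weights on either side of the first wall from $\alpha = 0$, realized as adjacent GIT quotients of a common master stack $[\cY/G]$ with a distinguished cocharacter $\lambda$ whose fixed locus describes the unstable (Kempf-Ness) stratum being swapped. On $\rM^{\alpha^{+}}$ (close to $\alpha = 0$) the bundle $\cE_{x}$ and $\Theta$ admit explicit descriptions in terms of parabolic universal bundles and parabolic theta line bundles, so that $H^{*}(\rM(r,L), \cE_{x} \otimes \Theta^{m})$ is computed by the cohomology of an explicit object $\mathcal{F}_{m}$ on $\rM^{\alpha^{+}}$. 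By the window theorem, a subcategory $\cW \subset \rD^{b}([\cY/G])$, cut out by a range of $\lambda$-weights, restricts to equivalences with both $\rD^{b}(\rM^{\alpha^{\pm}})$. For each critical $m$, check that $\mathcal{F}_{m}$ lifts into $\cW$ by bounding its $\lambda$-weights, and transport it across the wall to $\rM^{\alpha^{-}}$. The latter admits a natural projection onto a lower-dimensional moduli space with flag- or projective-type fibers, on which the transported object becomes tractable; intermediate cohomology then vanishes by Borel-Weil-Bott along the fibers combined with Serre vanishing on the base.

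The main obstacle is the weight estimate: for each of the finitely many critical $m$, the $\lambda$-weights of $\mathcal{F}_{m}$ must lie inside the window range dictated by the codimension of the unstable stratum. This computation is analogous to, but distinct from, the one carried out in the proof of Theorem B, and the extra flexibility of the window method over the direct parabolic wall-crossing of \cite{LM21} is precisely what removes the $g \geq 3$ hypothesis of that paper. Once the weight bound is established for all critical $m$, the one-dimensional family statement is immediate: the construction is uniform in the point $x \in X$, so the family $\{\cE_{x}\}_{x \in X}$ produces a one-parameter family of ACM bundles on $\rM(r,L)$.
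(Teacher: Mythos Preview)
Your overall shape---parabolic wall-crossing plus the quantization/window theorem---matches the paper, but two steps in your plan do not work as written. The central one is the claim that after crossing the first wall, $\rM^{\alpha^{-}}$ ``admits a natural projection onto a lower-dimensional moduli space with flag- or projective-type fibers'' on which Borel--Weil--Bott applies. Crossing a single wall from $\PP(\cE_{x})$ is a flip; the far side has no such fibration. The paper proceeds quite differently: it works with \emph{two} parabolic points of multiplicities $(r-1,1)$, identifies $\rH^{i}(\rM(r,L),\cE_{x}\otimes\Theta^{j})$ with $\rH^{i}(\rM(r,L,\be),\cO(1,0)\otimes\Theta^{j})$, and runs a \emph{nearly diagonal} wall-crossing through possibly many simple walls until reaching a chamber where $\cO(r+1,r)\otimes\Theta^{j+2}=\cO(1,0)\otimes\Theta^{j}\otimes\omega^{-1}$ is nef and big; the vanishing there is Kawamata--Viehweg, and the window theorem guarantees the cohomology is unchanged across every wall (this is Theorem~\ref{thm:vanishingoftwistedbundle}). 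The one-point setup you describe is essentially what \cite{LM21} used, and as the paper notes, that method does not reach $g=2$. The reduction to finitely many twists is also done more sharply in the paper: since $\cE_{x}^{*}\otimes\Theta$ is again a normalized Poincar\'e bundle on $\rM(r,L^{*}(r))\cong\rM(r,L)$, Serre duality reduces everything to $j\ge -1$.

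Your last sentence is also a gap: uniformity in $x$ gives you a family over $X$, but to conclude it is genuinely one-dimensional you need $\cE_{x_{1}}\not\cong\cE_{x_{2}}$ for $x_{1}\ne x_{2}$. This is not automatic; the paper invokes the theorem of Lange--Newstead \cite{LN05} for it.
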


\subsection{Structure of the paper}

Sections \ref{sec:parabolicbundle} and \ref{sec:wallcrossing} review several basic results about the moduli space of parabolic bundles on a curve. In Section \ref{sec:nef}, we investigate the positivity of the restricted Poincar\'e bundle. In Section \ref{sec:mainexample}, we explicitly compute the wall-crossings in the case of two parabolic points. The result is essential in the following sections. Section \ref{sec:cohomologyderivedcategory} is devoted to calculating cohomology groups of certain line bundles via derived categories of the variation of GIT. Three main theorems are proved in the remaining sections.

\subsection*{Conventions}

We work over $\CC$. In this paper, $X$ denotes a smooth connected projective curve of genus $g \ge 2$. The moduli space of rank $r$, determinant $L$ (resp. degree $d$) semistable vector bundles is denoted by $\rM(r, L)$ (resp. $\rM(r, d)$). Unless stated explicitly, we assume $(r, d) = 1$, so $\rM(r, L)$ is a smooth projective variety. Let $\ell$ be the unique integer such that $\ell d \equiv 1 \;\mathrm{mod}\; r$ and $0 < \ell < r$. Let $\Theta$ be the ample generator on $\Pic(\rM(r, L))$. Let $\cE$ be the normalized Poincar\'e bundle on $X \times \rM(r, L)$ such that for each $x \in X$, its restriction $\cE_{x}$ to $x \times \rM(r, L) \cong \rM(r, L)$ has the determinant $\Theta^{\ell}$. For a vector space $W$, $\PP(W)$ is the projective space of one-dimensional quotients of $W$. Every algebraic stack is defined over the fppf topology. 

\subsection*{Acknowledgements}

The authors thank M. S. Narasimhan for drawing their attention to this problem, sharing his idea, and providing valuable suggestions about this and related projects. Especially the first author would like to express his deepest gratitude to him for his invaluable teaching and warm encouragement for many years. He also thanks Ludmil Katzarkov and Simons Foundation for partially supporting this work via Simons Investigator Award-HMS. Part of this work was done while the second author was visiting Stanford University. He gratefully appreciates the hospitality during his visit.


\section{Moduli spaces of parabolic bundles and their birational geometry}\label{sec:parabolicbundle}

This section explains the notion of parabolic vector bundles and their moduli space. This paper only considers the parabolic structure with at most one flag for each parabolic point. Fix a smooth connected projective curve $X$ and a finite ordered set $\bx := (x_{1}, x_{2}, \cdots, x_{k})$ of distinct closed points of $X$, so $(X, \bx) \in \cM_{g,n}$.  

\begin{definition}\label{def:parabolicbundle}
A rank $r$ \emph{parabolic bundle} over $(X, \bx)$ is a collection of data $(E, V_{\bullet})$ where
\begin{enumerate}
\item $E$ is a rank $r$ vector bundle over $X$;
\item $V_{\bullet} = (V_{1}, V_{2}, \cdots, V_{k})$ where $V_{i}$ is a subspace of $E|_{x_{i}}$. The dimension of $V_{i}$ is called the \emph{multiplicity} of $V_{i}$ and denoted by $m_{i}$. 
\end{enumerate}
The sequence $\bm = (m_{1}, m_{2}, \cdots, m_{k})$ is called the \emph{multiplicity} of $(E, V_{\bullet})$. 
\end{definition}

\begin{definition}\label{def:modulistackofparabolicbundle}
Let $\cM_{(X, \bx)}(r, L, \bm)$ (resp. $\cM_{(X, \bx)}(r, d, \bm)$) be the moduli stack of parabolic bundles $(E, V_{\bullet})$ over $(X, \bx)$ of rank $r$, determinant $L$ (resp. degree $d$), and multiplicity $\bm$. If there is no confusion, we use $\cM(r, L, \bm)$ (resp. $\cM(r, d, \bm)$). 
\end{definition}

This Artin stack is highly non-separated. To obtain a projective coarse moduli space that enables us to do projective birational geometry, we need to introduce a stability condition. 

For a parabolic bundle $(E, V_{\bullet})$, a \emph{parabolic subbundle} $(F, W_{\bullet})$ is a pair such that $F \subset E$ is a subbundle and $W_{i} = F|_{i}\cap V_{i}$. A \emph{parabolic quotient bundle} is defined as a parabolic bundle $(E/F, Y_{\bullet})$ such that $Y_{i} = \im (V_{i} \to E/F|_{i})$. A \emph{parabolic weight} $\ba = (a_{1}, a_{2}, \cdots, a_{k})$ is a sequence of rational numbers such that $0 < a_{i} < 1$. Intuitively, we may regard $\ba$ as extra weight for the parabolic flags. For a parabolic bundle $(E, V_{\bullet})$, its \emph{parabolic degree} is $\pdeg (E, V_{\bullet}) := \deg E + \sum_{1\le i \le k}m_{i}a_{i}$. The same parabolic weight can induce the parabolic degree for parabolic subbundles and parabolic quotient bundles of $(E, V_{\bullet})$. The \emph{parabolic slope} is $\mu(E, V_{\bullet}) := \pdeg (E, V_{\bullet})/\rk E$. 

\begin{definition}\label{def:stability}
Fix a parabolic weight $\ba$. A parabolic bundle $(E, V_{\bullet})$ is \emph{$\ba$-(semi)stable} if for every parabolic subbundle $(F, W_{\bullet})$, $\mu(F, W_{\bullet}) \;(\le ) < \mu(E, V_{\bullet})$. A parabolic weight $\ba$ is \emph{general} if the $\ba$-semistability coincides with the $\ba$-stability. 
\end{definition}

\begin{definition}\label{def:modulispaceparabolicbundle}
Let $(X, \bx) \in \cM_{g,n}$ with $g \ge 2$. Let $\cM(r, L, \bm, \ba)$ (resp. $\cM(r, d, \bm, \ba)$) be the moduli stack of rank $r$, determinant $L$ (resp. degree $d$), $\ba$-semistable parabolic bundles over $(X, \bx)$. Let $\rM(r, L, \bm, \ba)$ (resp. $\rM(r, d, \bm, \ba)$) be its good moduli space, which is a normal projective variety of dimension $(r^{2}-1)(g-1) + \sum m_{i}(r-m_{i})$ (resp. $r^{2}(g-1)+1 + \sum m_{i}(r-m_{i})$). When $\ba$ is general, both $\rM(r, L, \bm, \ba)$ and $\rM(r, d, \bm, \ba)$ are nonsingular.
\end{definition}

\begin{remark}
When $g \le 1$, the moduli space behaves differently. For instance, if $g = 0$, depending on $\ba$, $\cM(r, L, \bm, \ba)$ may be empty. Consult \cite{MY21}. 
\end{remark}

\begin{example}\label{ex:smallweight}
The inequality $\mu(F, W_{\bullet}) \le \mu(E, V_{\bullet})$ defining the $\ba$-semistability can be understood as a perturbation of the inequality $\mu(F) \le \mu(E)$ for the semistability of the underlying bundle. If $(r, d = \det L) = 1$, the inequality is strict for all $F \subset E$, and each coefficient of $\ba$ is sufficiently small and general, then $\ba$ does not affect the stability. Therefore, a parabolic bundle $(E, V_{\bullet})$ is $\ba$-stable if and only if the underlying bundle $E$ is stable. Thus, the forgetful morphism $\cM(r, L, \bm, \ba) \to \cM(r, L)$ induces a map between coarse moduli spaces 
\[
	\pi : \rM(r, L, \bm, \ba) \to \rM(r, L)
\]
and $\pi$ is a $\times \Gr(m_{i}, r)$-fibration. Indeed, for a fixed Poincar\'e bundle $\cE$ over $X \times\rM(r, L)$, 
\[
	\rM(r, L, \bm, \ba) \cong \times_{\rM(r, L)}\Gr(m_{i}, \cE_{x_{i}}).
\]
\end{example}

\begin{example}\label{ex:forgetfulmap}
More generally, if $\ba = (a_{i})$ is general and one $a_{i}$ is sufficiently small, then forgetting one flag does not affect on the stability calculation. Thus, there is a forgetful morphism 
\[
	\pi : \rM_{(X, \bx)}(r, L, \bm, \ba) \to \rM_{(X, \bx')}(r, L, \bm', \ba')
\]
where $\bx' = \bx \setminus \{x_{i}\}$, $\bm' = \bm \setminus \{m_{i}\}$, and $\ba' = \ba \setminus \{a_{i}\}$. This is a $\Gr(m_{i}, r)$-fibration. 
\end{example}

\begin{example}\label{ex:topandzerodimflags}
Fix $(X, \bx) \in \cM_{g,n}$. Let $\bx' := \bx \setminus \{x_{k}\}$. Let $\bm' = (m_{i})_{1 \le i \le k-1}$ and $\ba' = (a_{i})_{1 \le i \le k-1}$. Suppose that $m_{k} = 0$ or $r$. Then 
\[
	\rM_{(X, \bx)}(r, L, \bm, \ba) \cong \rM_{(X, \bx')}(r, L, \bm', \ba'). 
\]
\end{example}

When one of $\ba$ is sufficiently close to one, there is another contraction. 

\begin{proposition}\label{prop:generalizedHeckemodification}
We use the notation in Example \ref{ex:topandzerodimflags}. For a general parabolic weight $\ba = (a_{i})$, assume that $a_{k}$ is sufficiently close to one. Then there exists a morphism 
\[
	\pi_{1} : \rM_{(X, \bx)}(r, L, \bm, \ba) \to \rM_{(X, \bx')}(r, L(-(r-m_{k})x_{k}), \bm', \ba').
\]
\end{proposition}

\begin{proof}
It is sufficient to construct a morphism 
\[
	\cM_{(X, \bx)}(r, L, \bm, \ba) \to \cM_{(X, \bx')}(r, L(-(r-m_{k})x_{k}), \bm', \ba')
\]
between algebraic stacks. 

Let $\widetilde{\bm} = (\widetilde{m}_{i})$ be a multiplicity such that $\widetilde{m}_{i} = m_{i}$ for $1 \le i \le k-1$ and $\widetilde{m}_{k} = r$. By Example \ref{ex:topandzerodimflags}, there is a functorial isomorphism $\cM_{(X, \bx)}(r, L(-(r-m_{k})x_{k}), \widetilde{\bm}, \ba) \cong \cM_{(X, \bx')}(r, L(-(r-m_{k})x_{k}), \bm', \ba')$. Thus, it is sufficient to show that there is a morphism 
\[
	\cM(r, L, \bm, \ba) \to \cM(r, L(-(r-m_{k})x_{k}), \widetilde{\bm}, \ba).
\]

For a stable bundle $(E, V_{\bullet}) \in \cM(r, L, \bm, \ba)$, let $E'$ be the kernel of the restriction map $E \to E|_{x_{k}} \to E|_{x_{k}}/V_{x_{k}}$. Then for each $i \ne k$, $E'|_{x_{i}}$ can be identified with $E|_{x_{i}}$. Set $V_{i}' = V_{i}$ under this identification. On the other hand, the restriction $f : E'|_{x_{k}} \to E|_{x_{k}}$ is a linear map with image $V_{k}$. We set $V_{k}' := f^{-1}(V_{k}) = E'|_{x_{k}}$. Then we obtain a parabolic bundle $(E', V_{\bullet}') \in \cM(r, L(-(r-m_{k})x_{k}), \widetilde{\bm}, \ba)$. Thus, we have a morphism 
\begin{equation}\label{eqn:generalizedHecke}
\begin{split}
\cM(r, L, \bm, \ba) &\to \cM(r,L(-(r-m_{k})x_{k}), \widetilde{\bm})\\
(E, V_{\bullet}) & \mapsto (E', V_{\bullet}').
\end{split}
\end{equation}

We claim that $(E', V_{\bullet}')$ is $\ba$-semistable. Then the morphism in Equation \eqref{eqn:generalizedHecke} factors through $\cM(r,L(-(r-m_{k})x_{k}), \widetilde{\bm}, \ba)$. 

Suppose not. Then there is a parabolic subbundle $(F', W_{\bullet}')$ of $(E', V_{\bullet}')$ such that $\mu(F', W_{\bullet}') > \mu(E', V_{\bullet}')$. Let $\rk F' = s$, $\deg F' = e$, and $n_{i} = \dim W_{i}'$. Note that $n_{k} = s$. 

Set $d = \det L$. Then
\begin{equation}\label{eqn:slopecomparison}
\begin{split}
	\mu(E, V_{\bullet}) - \mu(E', V_{\bullet}') &= \frac{d + \sum m_{i}a_{i}}{r} - \frac{d - (r-m_{k}) + \sum_{i \ne k}m_{i}a_{i} + ra_{k}}{r}\\ 
	&= \frac{(r-m_{k})(1-a_{k})}{r}.
\end{split}
\end{equation}

In general, $F'$ is not a subbundle of $E$. But there is a subbundle $F$ of $E$ such that $F/F'$ is a sheaf supported on $x_{k}$ and $\dim (F/F')|_{x_{k}} = s - c$, where $c := \dim F|_{x_{k}} \cap V_{x_{k}}$. For the induced parabolic subbundle $(F, W_{\bullet})$ of $(E, V_{\bullet})$, 
\begin{equation}\label{eqn:slopecomparisonsubbundle}
\begin{split}
	\mu(F, W_{\bullet}) - \mu(F', W_{\bullet}') &= \frac{e + (s-c) + \sum_{i\ne k}a_{i}n_{i} + a_{k}c}{s} - \frac{e + \sum_{i \ne k}a_{i}n_{i} + a_{k}s}{s}\\
	&= \frac{(s-c)(1-a_{k})}{s}.
\end{split}
\end{equation}

By combining \eqref{eqn:slopecomparison} and \eqref{eqn:slopecomparisonsubbundle}, we have 
\[
	\mu(E, V_{\bullet}) - \mu(F, W_{\bullet}) = \mu(E', V_{\bullet}') - \mu(F', W_{\bullet}') + (1-a_{k})\left(\frac{r-m_{k}}{r} - \frac{s - c}{s}\right).
\]
Note that $\mu(E', V_{\bullet}') - \mu(F', W_{\bullet}')$ is independent from $a_{k}$, as the coefficient of $a_{k}$ in each term is one. Thus, if $a_{k}$ is sufficiently close to one, then the last term is negligible. By the assumption, $\mu(E', V_{\bullet}') - \mu(F', W_{\bullet}') < 0$ and hence the left hand side is also negative. It violates the stability of $(E, V_{\bullet})$ and obtain a contradiction. 
\end{proof}

\begin{remark}\label{rmk:Hecke}
The morphism in Proposition \ref{prop:generalizedHeckemodification} can be understood as a \emph{generalized Hecke correspondence}. When $d = k = 1$ and $m = r-1$, up to taking the dual bundle, we obtain the classical Hecke correspondence in the sense of \cite[Section 4]{NR75}. A difference in the $d > 1$ case is that $\rM(r, L, m, a)$ does not admit morphisms to both $\rM(r, L)$ and $\rM(r, L(-x))$, so we need a birational modification on $\rM(r, L, m, a)$. It can be explained in terms of parabolic wall-crossing as Section \ref{sec:wallcrossing} below. 
\end{remark}


\section{Wall-crossing}\label{sec:wallcrossing}

This section reviews how $\rM(r, L, \bm, \ba)$ changes as $\ba$ varies. 

\subsection{General theory}\label{ssec:generalthy}

Let $k$ be the number of parabolic points. Recall that a parabolic weight is, a length $k$ sequence of rational number $\ba = (a_{i})$ with $0 < a_{i} < 1$. The closure of the set of parabolic weights is $[0, 1]^{k} \subset \RR^{k}$. 

There is a wall-chamber decomposition of $[0, 1]^{k}$. A parabolic bundle $(E, V_{\bullet}) \in \rM(r, L, \bm, \ba)$ is strictly semi-stable if and only if there is a maximal destabilizing subbundle $(F, W_{\bullet})$ such that $\mu(F, W_{\bullet}) = \mu(E, V_{\bullet})$. More explicitly, this is true only if
\begin{equation}\label{eqn:wallequation}
	\frac{e+\sum n_{i}a_{i}}{s} = \frac{d+\sum m_{i}a_{i}}{r}
\end{equation}
for some $0 < s < r$, $e \in \ZZ$, and $\bn = (n_{i})$. Here $s$ is the rank, $e$ is the degree, and $\bn$ is the multiplicity of $(F, W_{\bullet})$. So we require that $n_{i} \le \mathrm{min}\;\{s, m_{i}\}$. Let $\Delta(s, e, \bn)$ be the set of weights that satisfy \eqref{eqn:wallequation}. Note that this is an intersection of a hyperplane and $[0, 1]^{k}$. We call $\Delta(s, e, \bn)$ a \emph{wall} if it is nonempty. We also obtain 
\begin{equation}\label{eqn:wallduality}
	\Delta(s, e, \bn) = \Delta(r-s, d-e, \bm - \bn).
\end{equation}
Note that $\Delta(s, e, \bn) = \Delta(ks, ke, k\bn)$ if $ks < r$ for some $k > 1$. We call such a wall a \emph{multiple wall}, and otherwise, it is a \emph{simple wall}. A wall $\Delta(s, e, \bn)$ is simple if and only if $\{s, e, n_{i}\}$ are coprime and $\{r-s, d-e, m_{i} - n_{i}\}$ are coprime. 

The stability changes only if a parabolic weight $\ba$ lies on one of the walls. So for each open chamber $C \subset [0, 1]^{k}$, for any $\ba, \ba' \in C$, $\rM(r, L, \bm, \ba) \cong \rM(r, L, \bm, \ba')$. The stability coincides with the semistability if $\ba \in (0, 1)^{k} \setminus \bigcup \Delta(s, e, \bn)$. 

Let 
\begin{equation}\label{eqn:wallcrossing}
	\xymatrix{\rM(r, L, \bm, \ba^{-}) \ar@{<-->}[rr] \ar[rd]_{\pi_{-}}&& \rM(r, L, \bm, \ba^{+}) \ar[ld]^{\pi_{+}}\\
	&\rM(r, L, \bm, \ba)}
\end{equation}
be a wall-crossing. Suppose that $\ba$ is a general point of $\Delta(s, e, \bn)$, and $\ba^{-}$ and $\ba^{+}$ are two very close weights on the opposite chambers. The contraction maps $\pi_{\pm}$ are birational surjections. Let $Y^{\pm}$ be the exceptional locus on $\rM(r, L, \bm, \ba^{\pm})$ and let $Y := \pi_{\pm}(Y^{\pm})$. The subvarieties $Y^{\pm}$ are called the \emph{wall-crossing centers}. For our purpose, we need a lower bound of the codimension of $Y^{\pm}$. Observe that the parabolic bundles in $Y^{-}$ are stable with respect to $\ba^{-}$, but unstable with respect to $\ba^{+}$. Thus, $Y^{-}$ parametrizes unstable parabolic bundles with respect to some weight. The codimension of the unstable locus is estimated in \cite{Sun00}. For an outline of the proof, see also \cite[Section 3.2]{MY20}.

\begin{theorem}[\protect{\cite[Proposition 5.1]{Sun00}}]\label{thm:codimunstable}
In $\cM(r, L, \bm)$, the codimension of the unstable locus with respect to a weight $\ba$ is at least $(r-1)(g-1) + 1$. 
\end{theorem}

\begin{corollary}\label{cor:codimcenter}
The codimension of the wall-crossing center is at least $(r-1)(g-1) + 1$. In particular, if $g \ge 2$, every wall-crossing is a flip. 
\end{corollary}

We say a wall-crossing is \emph{simple} if:
\begin{enumerate}
\item The wall $\Delta(s, e, \bn)$ is a simple wall and;
\item $\ba \in \Delta(s, e, \bn)$ is on a unique wall. 
\end{enumerate}
A simple wall-crossing has an explicit description. The wall-crossing centers $Y^{\pm}$ are irreducible and their image $Y \cong \rM(s, e, \bn, \ba) \times_{\mathrm{Pic}(X)}\rM(r-s, d-e, \bm - \bn, \ba)$ is a smooth variety. For $(E, V_{\bullet}) \in Y^{-}$, there is a unique maximal $\ba$-destabilizing subbundle $(E^{-}, V_{\bullet}^{-}) \in \rM(s, e, \bn, \ba)$, which fits into an exact sequence 
\[
	0 \to (E^{-}, V_{\bullet}^{-}) \to (E, V_{\bullet}) \to (E^{+}, V_{\bullet}^{+}) \to 0
\]
of parabolic bundles. The map $\pi_{-}$ is restricted to the map $Y^{-} \to Y$, which sends $(E, V_{\bullet})$ to the $S$-equivalence class of $(E, V_{\bullet})$, which is the class of $(E^{-}, V_{\bullet}^{-})\oplus(E^{+}, V_{\bullet}^{+})$. We denote this class by $((E^{-}, V_{\bullet}^{-}), (E^{+}, V_{\bullet}^{+}))$. Conversely, if $ x := ((E^{-}, V_{\bullet}^{-}), (E^{+}, V_{\bullet}^{+}))$ is a general point in $Y$ so that both $(E^{-}, V_{\bullet}^{-})$ and $(E^{+}, V_{\bullet}^{+})$ are stable, then the fiber $\pi_{-}^{-1}(x)$ is a projective space $\PP \Ext^{1}((E^{+}, V_{\bullet}^{+}), (E^{-}, V_{\bullet}^{-}))$ (see \cite[Section 1]{Yok95} for the derived functors on the category of parabolic sheaves). A functorial description is possible. Let $(\cE^{-}, \cV_{\bullet}^{-})$ (resp. $(\cE^{+}, \cV_{\bullet}^{+})$) be the Poincar\'e family over $\rM(s, e, \bn, \ba)$ (resp. $\rM(r-s, d-e, \bm- \bn, \ba)$). The standard GIT construction and the descent method imply the existence of Poincar\'e bundle (\cite[Chapter 5]{New78}, \cite[Section 4.6]{HL10}). Then $Y^{-} \cong \PP R^{1}\pi_{- *}\CParHom((\cE^{+}, \cV_{\bullet}^{+}), (\cE^{-}, \cV_{\bullet}^{-}))$ and $Y^{+} \cong \PP R^{1}\pi_{+ *}\CParHom((\cE^{-}, \cV_{\bullet}^{-}), (\cE^{+}, \cV_{\bullet}^{+}))$. In particular, they are projective bundles over $Y$. Finally, it is well-known that the blow-up of $\rM(r, L, \bm, \ba^{-})$ along $Y^{-}$ is isomorphic to the blow-up of $\rM(r, L, \bm, \ba^{+})$ along $Y^{+}$.

\subsection{GIT construction of moduli space}\label{ssec:GITconstruction}

The moduli spaces $\rM(r, L, \bm, \ba)$ can be constructed by GIT and each wall-crossing is indeed obtained by the variation of GIT. We review a GIT construction of $\rM(r, L, \ba)$ after Bhosle (\cite{Bho89}). 

We fix a degree one line bundle $\cO(1)$ on $X$. Fix an integer $m \gg 0$ such that $\rH^{1}(E(m)) = 0$ and $E(m)$ is globally generated for every $(E, V_{\bullet}) \in \rM(r, L, \ba)$. (Indeed, we may find such an $m$ that works for all $\ba$.) Let $\chi := \rH^{0}(E(m)) = d + r(m + 1-g)$ and let $Q := \Quot(\cO_{X}^{\chi})$ be the quot scheme parametrizing quotients of $\cO_{X}^{\chi}$ whose Hilbert polynomial is that of $E(m)$. Let $R \subset Q$ be a locally closed subscheme parametrizing the quotients $\cO_{X}^{\chi} \stackrel{\varphi}{\to} F \to 0$ such that $\rH^{1}(F) = 0$, $\rH^{0}(\cO_{X}^{\chi}) \stackrel{\varphi}{\cong} \rH^{0}(F)$, $F$ is locally free, and $\det F \cong L(rm)$. Let $\cO_{R \times X}^{\chi} \to \cF \to 0$ be the universal quotient over $R \times X$. For $x_1, x_2, \cdots, x_k \in X$, let
\[
	\widetilde{R} := \times_{R} \mathrm{Gr}(m_i, \cF|_{x_i})
\]
be the fiber product of Grassmannian bundles over $R$. There is a natural $\SL_{\chi}$-action on $\widetilde{R}$. Note that $\widetilde{R}$ parametrizes pairs $([\cO_{X}^{\chi} \stackrel{\varphi}{\to} F \to 0], \{V_{i} \subset F|_{x_{i}}\})$. 

We can make an explicit $\SL_{\chi}$-equivariant embedding of $\widetilde{R}$ into a product of elementary varieties as the following. Let $Z := \PP\rHom(\wedge^{r}\CC^{\chi}, \rH^{0}(L(rm)))^{*}$. Then for any $[\cO_{X}^{\chi} \stackrel{\varphi}{\to} F \to 0] \in R$, 
\[
	\wedge^{r}\CC^{\chi} \stackrel{\wedge^{r}\varphi}{\cong} \wedge^{r}\rH^{0}(F) \to \rH^{0}(\wedge^{r}F) \cong \rH^{0}(L(rm))
\]
gives a point in $Z$. Furthermore, for each $x_{i}$, by taking the inverse image $\psi_{i}^{-1}(V_{i})$ for 
\[
	\psi_{i} : \CC^{\chi} \cong \rH^{0}(F) \to F|_{x_{i}},
\]
we obtain an element in $\Gr(\chi - r + m_i, \chi)$ for $x_k$. Therefore, we have an $\SL_{\chi}$-equivariant morphism 
\begin{equation}\label{eqn:embeddingofZ}
	\widetilde{R} \to \times_Z \Gr(\chi - r + m_i, \chi)
\end{equation}
\[
	([\cO_{X}^{\chi} \stackrel{\varphi}{\to} F \to 0], \{V_{i} \subset F|_{x_{i}}\})
	\mapsto (\wedge^{r}\varphi, \{\psi_{i}^{-1}(V_{i})\}).
\]
In \cite[Section 7]{Tha96}, it was shown that this morphism is indeed an embedding. In \cite{Bho89}, Bhosle calculated an explicit linearization $A(\ba)$, depending on $\ba$, which gives $\widetilde{R}^{ss}(A(\ba))/\SL_{\chi} \cong \rM(r, L, \ba)$. 

In summary, the wall-crossing of $\ba$-stability is obtained by the variation of GIT. 

\subsection{Mori's program}

The wall-crossing picture can be incorporated with projective birational geometry of $\rM(r, L, \bm, \ba)$ in the nicest way. Let $\ba$ be a general parabolic weight. Then every rational contraction of $\rM(r, L, \bm, \ba)$ can be obtained in terms of wall-crossings, forgetful maps, and generalized Hecke correspondences. Proposition \ref{prop:weightanddivisor} can be recovered from \cite[Section 5]{MY20}, but for the readers' convenience, we give the proof here. 

\begin{proposition}\label{prop:weightanddivisor}
Let $\ba \in (0, 1)^{k}$ be a general parabolic weight. Then there is a linear isomorphism between a cone over $[0, 1]^{k}$ and the effective cone $\Eff(\rM(r, L, \bm, \ba))$ of divisors. 
\end{proposition}

\begin{proof}
By the GIT construction of $\rM(r, L, \bm, \ba)$ as an $\SL_{\chi}$-quotient in Section \ref{ssec:GITconstruction}, all of them can be constructed as a GIT quotient of the same smooth variety $\widetilde{R}$ with various linearizations. Furthermore,  the parabolic weights depend linearly on the choice of linearization. In particular, there is a linear embedding $(0, 1)^{k} \to \rN^{1, \SL_{\chi}}(\widetilde{R})_{\RR}$, where $\rN^{1, \SL_{\chi}}(\widetilde{R})$ is the space of numerical classes of $\SL_{\chi}$-linearized line bundles on $\widetilde{R}$. Since the character group of $\SL_{\chi}$ is trivial and $\widetilde{R}$ is normal, $\rN^{1, \SL_{\chi}}(\widetilde{R})_{\RR} \to \rN^{1}(\widetilde{R})_{\RR}$ is bijective (\cite[Corollary 1.6]{MFK94}). Applying Kempf's descent lemma (\cite[Theorem 2.3]{DN89}), we have a surjective linear map $\rN^{1}(\widetilde{R})_{\RR} \cong \rN^{1, \SL_{\chi}}(\widetilde{R})_{\RR} \to \rN^1(\widetilde{R}\git_{L}\SL_{\chi})_{\RR} = \rN^{1}(\rM(r, L, \bm, \ba))_{\RR}$. This map is bijective because the unstable locus has codimension $\ge 2$ (Theorem \ref{thm:codimunstable}). In summary, there is a linear embedding $(0, 1)^{k} \to \rN^{1, \SL_{\chi}}(\widetilde{R})_{\RR} \to \rN^1(\rM(r, L, \bm, \ba))_{\RR}$, which induces a linear embedding of a cone over $[0,1]^k$ to $\rN^1(\rM(r, L, \bm, \ba))_{\RR}$.

Now we show that the cone over the closure $[0, 1]^{k}$ of $(0, 1)^{k}$ can be identified with $\Eff(\rM(r, L, \bm, \ba))$. Recall that for any effective divisor $D$ (or equivalently, a line bundle $\cO(D)$) of a normal $\QQ$-factorial projective variety $V$, we may associate a rational contraction $V \dashrightarrow V(D)$ where 
\[
	V(D) := \proj \bigoplus_{m \ge 0}\rH^{0}(V, \cO(mD)).
\]
Conversely, any rational contraction of $V$ can be obtained in this way. If $D \in \mathrm{int}\;\Eff(V)$, then $V \dashrightarrow V(D)$ is a birational map and if $D \in \partial \Eff(V)$, $V \dashrightarrow V(D)$ is a contraction with positive dimensional general fibers. 

Note that on the boundary $\partial [0, 1]^{k}$, one of the coordinates must be either zero or one. In the first case, we can obtain a rational contraction $\rM(r, L, \bm, \ba) \to \rM(r, L, \bm', \ba')$ in Example \ref{ex:forgetfulmap}. In the latter case, we have a generalized Hecke modification in Proposition \ref{prop:generalizedHeckemodification}. All of them are contractions with positive dimensional fibers, so they must be associated with divisors on the boundary of the effective cone. Since the effective cone is convex, this is sufficient to obtain the result. 
\end{proof}


\section{Nef vector bundles}\label{sec:nef}

Let $\cE$ be the normalized Poincar\'e bundle over $X \times \rM(r, L)$. Recall that for any $x \in X$, $\cE_{x}$ is the vector bundle on $\rM(r, L)$ obtained by restricting $\cE$ on $x \times \rM(r, L)$. We prove the nefness of $\cE_{x}$ and some other positivity results. A key ingredient is the birational geometry of the moduli space of parabolic bundles with one parabolic point.

\begin{theorem}\label{thm:nef}
The restricted Poincar\'e bundle $\cE_{x}$ is a strictly nef vector bundle. 
\end{theorem}

\begin{remark}\label{rmk:nef}
The case of $d = 1$ of Theorem \ref{thm:nef} is shown in \cite[Proposition 3.3]{Nar17} and \cite[Lemma 13]{BM19}. So we assume $d > 1$. When $d = 1$, the numerical computation in Lemma \ref{lem:1stwallcrossing} is still valid. But we have $\ell = 1$ and thus, $a = 1$. Therefore, the first wall-crossing is precisely the fibration $\rM(r, L, r-1, \epsilon) \to \rM(r, L(-x))$ in Proposition \ref{prop:generalizedHeckemodification}, that is, a contraction in the Hecke correspondence. In particular, there is no flip. 
\end{remark}

We obtain another strictly nef bundle immediately. 

\begin{corollary}\label{cor:nef}
The vector bundle $\cE_{x}^{*}\otimes \Theta$ is strictly nef. 
\end{corollary}

\begin{proof}
Fix a line bundle $A$ of degree $1$ on $X.$ Consider the vector bundle $\cE^* \otimes p^* A \otimes q^* \Theta$ on $X \times \rM(r,L)$, where $p : X \times \rM(r, L) \to X$ and $q : X \times \rM(r, L) \to \rM(r, L)$ are two projections.  From the isomorphism $\rM(r,L) \cong \rM(r,L^{*}) \cong \rM(r, A^{r} \otimes L^{*}),$ we see that $\cE^* \otimes p^* A\otimes q^* \Theta$ is the normalized Poincar\'e bundle on $X \times \rM(r,A^{r} \otimes L^*) \cong X \times \rM(r, L)$. The restriction of  $\cE^* \otimes p^* A\otimes q^* \Theta$ to $x \times \rM(r,L)$ is isomorphic to $\cE_x^* \otimes \Theta$. From Theorem \ref{thm:nef}, we see that $\cE_{x}^{*}\otimes \Theta$ is strictly nef. 
\end{proof}

From now on, we prove the nefness of $\cE_{x}$. By definition, we need to show that $\cO_{\PP (\cE_{x})}(1)$ is nef. Observe that $\PP (\cE_{x}) \cong \rM(r, L, r-1, \epsilon)$ for some very small $\epsilon > 0$ (Example \ref{ex:smallweight}).  We use $\rM(r, L, r-1, a)$ for the place $\rM(r, L, (r-1), (a))$. 

We explicitly analyze the first wall-crossing of the moduli space $\rM(r, L, r-1, \epsilon)$ by increasing $\epsilon \to 1$. Recall that $\ell$ is a positive integer such that $\ell d \equiv 1 \;\mbox{mod} \;r$ and $0 < \ell < r$. 

\begin{lemma}\label{lem:1stwallcrossing}
Let $a$ be the smallest parabolic weight on a wall. Then $a = 1/\ell$. Furthermore, a maximal destabilizing subbundle has rank $k\ell$ and degree $ke$ for some $k \in \ZZ$ and an integer $e$ satisfying $\ell d - re = 1$. 
\end{lemma}

\begin{proof}
Let $\Delta(s, e, n)$ be a wall. Note that $n$ is either $s$ or $s-1$. By Equation \eqref{eqn:wallduality}, exchanging $s$ by $r-s$ if necessary, we may assume that $n = s$. Then from $(e+sa)/s = (d+(r-1)a)/r$, we have $a = (sd - re)/s$. Since $(r, d) = 1$, we can find a unique positive $0 < s < r$ and $e \in \ZZ$ such that $sd - re = 1$, which is $\ell$. 

We claim that $a = (\ell d - re)/\ell = 1/\ell$ provides the first wall. Consider a wall $a' = (s'd - re')/s'$. Setting $k:=s'd - re'$, $s'd \equiv k \;\mathrm{mod}\; r$. On the other hand, $k\ell d \equiv k \;\mathrm{mod}\; r$. So if $k\ell < r$, from the invertibility of $d$ in $\ZZ/r\ZZ$, $s' = k\ell$ and $e' = ke$. Then $a' = k/k\ell = 1/\ell = a$. If $k\ell \ge r$, there is a unique positive integer $t$ such that $0 < s' = k\ell - tr < r$. Then $a' = k/s' = k/(k\ell-tr) > k/k\ell = 1/\ell$. 

This computation tells us that $\Delta(\ell, e, \ell) = \Delta(s', e', s')$ only if $k\ell < r$ and $(s', e')=(k\ell, ke)$. So we obtain the last assertion. 
\end{proof}

We have the following diagram:
\begin{equation}\label{eqn:twocontractions}
	\xymatrix{&\PP(\cE_{x}) = \rM(r, L, r-1, \epsilon) \ar[ld]_{\pi} \ar[rd]^{\pi_{-}}\\
	\rM(r, L) && \rM(r, L, r-1, 1/\ell)}
\end{equation}
The first map $\pi$ is a projective bundle and $\pi_{-}$ is a small contraction by Corollary \ref{cor:codimcenter}. And $\rho(\PP(\cE_{x})) = \rho(\rM(r, L)) + 1 = 2$. Since $\rho(\rM(r, L, r-1, 1/\ell)) < \rho(\rM(r, L, r-1, \epsilon)) = 2$, $\rho(\rM(r, L, r-1, 1/\ell)) = 1$. Let $A$ be an ample generator of $\mathrm{Pic}(\rM(r, L, r-1, 1/\ell))$. Then $\pi^{*}\Theta$ and $\pi_{-}^{*}A$ generates $\rN^{1}(\PP(\cE_{x}))_{\RR}$. 

\begin{definition}\label{def:twocurves}
Fix a general point $((E^{-}, V^{-}), (E^{+}, V^{+}))$ in the component $\rM(\ell, e, \ell, 1/\ell) \times_{\mathrm{Pic}(X)}\rM(r-\ell, d-e, r-\ell-1, 1/\ell)$ of the wall-crossing center in $\rM(r, L, r-1, 1/\ell)$. Let $C$ be a line class in the fiber $\pi_{-}^{-1}(((E^{-}, V^{-}), (E^{+}, V^{+}))) \cong \PP \Ext^{1}((E^{+}, V^{+}), (E^{-}, V^{-}))$ (Section \ref{ssec:generalthy}).
\end{definition}

\begin{lemma}\label{lem:intersection2}
The intersection number $\cO_{\PP (\cE_{x})}(1) \cdot C$ is zero. 
\end{lemma}

\begin{proof}
The image $\pi(\PP \Ext^{1}((E^{+}, V^{+}), (E^{-}, V^{-}))) = \PP \Ext^{1}(E^{+}, E^{-}) =: \PP$ parametrizes isomorphism classes of extensions, and there is an exact sequence over $X \times \PP$
\[
	0 \to p^{*}E^{-} \otimes q^{*}\cO_{\PP}(1) \to E \otimes q^{*}\cO_{\PP}(m) \to p^{*}E^{+} \to 0
\]
(\cite[Lemma 2.3]{Ram73}, \cite[Example 2.1.12]{HL10}). Here $p : X \times \PP \to X$ and $q : X \times \PP \to \PP$ are two projections. If we restrict the exact sequence to $x \times C \cong x \times \PP^{1} \subset X \times \PP$, we have 
\[
	0 \to E^{-}_{x} \otimes \cO_{\PP^{1}}(1) \to E_{x} \otimes \cO_{\PP^{1}}(m) \to E^{+}_{x} \to 0.
\]
Since $\cE_{x}$ (and hence its restriction $E_{x}$) is normalized as $c_{1}(\cE_{x}) = \Theta^{\ell}$ where $0 < \ell < r$, and $E^{-}_{x}$ and $E^{+}_{x}$ are constant, $\ell = c_{1}(E^{-}_{x} \otimes \cO_{\PP^{1}}(1)) = c_{1}(E_{x} \otimes \cO_{\PP^{1}}(m)) = \ell + rm$. Thus, we have $m = 0$. Then $E_{x}|_{\pi(C)}$ fits in $0 \to \cO_{\PP^{1}}(1)^{\ell} \to E_{x} \to \cO_{\PP^{1}}^{r-\ell} \to 0$. A cohomology computation shows that this is a split extension. Therefore $\pi^{-1}(\pi(C)) = \PP(\cO_{\PP^{1}}(1)^{\ell} \oplus \cO_{\PP^{1}}^{r-\ell})$. The parabolic flag in $E_{x}$ is determined by that of $E^{+}_{x}$ and it is fixed over $C$. This implies that $C \cong \PP (\cO_{\PP^{1}}) \hookrightarrow \PP(\cO_{\PP^{1}}(1)^{\ell} \oplus \cO_{\PP^{1}}^{r-\ell})$. Therefore $\cO_{\PP(\cE_{x})}(1)|_{C} = \cO_{\PP (\cO_{\PP^{1}})}(1) = \cO_{\PP^{1}}$ and $\cO_{\PP (\cE_{x})}(1) \cdot C = 0$. 
\end{proof}

\begin{proof}[Proof of Theorem \ref{thm:nef}]
From $\rho(\PP (\cE_{x})) = 2$, $\pi_{-}^{*}A \cdot C = 0$, and Lemma \ref{lem:intersection2}, we can conclude that $\cO_{\PP (\cE_{x})}(1)$ and $\pi_{-}^{*}A$ are proportional. $\cO_{\PP(\cE_{x})}(1)$ is a positive multiple of $\pi_{-}^{*}A$ because it intersects with the line class in a fiber of $\pi : \PP (\cE_{x}) \to \rM(r, L)$ positively. Therefore $\cO_{\PP(\cE_{x})}(1)$ is semi-ample, so $\cO_{\PP(\cE_{x})}(1)$ and $\cE_{x}$ are nef. $\cE_{x}$ is strictly nef because $\cO_{\PP(\cE_{x})}(1)$ is not ample.
\end{proof}

The following is essentially the same computation with \cite[Proposition 3.1]{Nar17}. 

\begin{lemma}\label{lem:DetE}
Let $\cE$ be the normalized Poincar\'e bundle on $X \times \rM(r, L)$. Then 
\[
	\Det(\cE^{*}) := \det(Rq_{*}(\cE^{*}))^{-1} \cong \Theta^{\ell(1-g) - e}.
\]
\end{lemma}

\begin{proof}
For the notational simplicity, let $\rM := \rM(r, L)$ and $\rM' := \rM(r, L^{*})$. Then there is an isomorphism $\psi : \rM \to \rM'$. Since the isomorphism maps the unique ample generator $\Theta_{\rM'}$ to $\Theta_{\rM}$, by \cite[Proposition 2.1]{Nar17}, 
\[
	\Theta_{\rM} = \psi^{*}(\Theta_{\rM'}) = (\Det(\cE^{*}))^{r} \otimes (\det(\cE^{*}|_{\{x\} \times \rM}))^{-d+r(1-g)}
	= \Det(\cE^{*})^{r} \otimes \Theta_{\rM}^{-\ell(-d+r(1-g))}.
\]
Thus, $\Det(\cE^{*}) = \Theta_{\rM}^{\frac{1 + \ell (-d + r(1-g))}{r}} = \Theta_{\rM}^{-e + \ell(1-g)}$.
\end{proof}

\begin{remark}\label{rmk:suppressingpullback}
Once we fix the parabolic points and the multiplicity, $\rM(r, L, \bm, \ba)$ are all birational, and for any general $\ba$ and $\ba'$, $\rM(r, L, \bm, \ba)$ and $\rM(r, L, \bm, \ba')$ are connected by finitely many flips (Section \ref{sec:wallcrossing}). In particular, their Picard groups are identified. For a notational simplicity, we will suppress all pull-backs (by flips and regular contractions) of line bundles in our notation. For instance, when there is only one parabolic point $x$, there are two rational contractions $\pi : \rM(r, L, r-1, \epsilon) \to \rM(r, L)$ and $\pi_{1} : \rM(r, L, r-1, \epsilon) \dashrightarrow \rM(r, L, r-1, 1-\epsilon) \to \rM(r, L(-x))$. If there is no chance of confusion, we use $A \otimes B$ for $\pi^{*}A \otimes \pi_{1}^{*}B$. We denote $\cO_{\PP (\cE_{x})}(a)$ by $\cO(a)$. 
Later, when there are two parabolic points, we will set $\cO(a, b) := p_{1}^{*}\cO_{\PP(\cE_{x})}(a) \otimes p_{2}^{*}\cO_{\PP(\cE_{y}^{*})}(b)$ where $p_{1} : \PP(\cE_{x})\times_{\rM(r, L)}\PP(\cE_{y}^{*}) \to \PP(\cE_{x})$ and $p_{2} : \PP(\cE_{x})\times_{\rM(r, L)}\PP(\cE_{y}^{*}) \to \PP(\cE_{y}^{*})$.
\end{remark}

\begin{lemma}\label{lem:pullbackofample}
Let $k = (r, d-1)$. On $\rM(r, L, r-1, a)$, $\Theta_{\rM(r, L(-x))}^{k} = \cO_{\PP(\cE_{x})}(r) \otimes \Theta_{\rM(r, L)}^{1-\ell}$.
\end{lemma}

\begin{proof}
The proof is a careful refinement of \cite[Proposition 3.3]{Nar17}. We may assume that $a$ is sufficiently small, so $\rM(r, L, r-1, a) \cong \PP(\cE_{x})$. 

Let $p : X \times \PP(\cE_{x}) \to X$ and $q : X \times \PP(\cE_{x}) \to \PP(\cE_{x})$ be two projections and $\pi : X \times \PP(\cE_{x}) \to X \times \rM(r, L)$. Let $i_{x} : \PP (\cE_{x}) \cong x \times \PP(\cE_{x}) \hookrightarrow X \times \PP(\cE_{x})$. Recall that there are two exact sequences that appear on the construction of the Hecke correspondence:
\[
	0 \to H(\cE) \to \pi^{\#}(\cE) \to p^{*}\cO_{x} \otimes q^{*}\cO_{\PP (\cE_{x})}(1) \to 0
\]
and 
\begin{equation}\label{eqn:sesforpiE}
	0 \to \pi^{\#}(\cE^{*}) \to K(\cE) \to i_{x *}(\cO_{\PP(\cE_{x})}(-1)\otimes T_{x}) \to 0.
\end{equation}
Here $\pi^{\#}\cE$ is the pull-back of $\cE$ to $X \times \PP(\cE_{x})$ and $T_{x}$ is the tangent space of $X$ at $x$. 

By \cite[Proposition 2.1]{Nar17}, 
\[
	\Theta_{\rM(r, L(-x))}^{k} = \Theta_{\rM(r, L^{*}(x))}^{k} = \mathrm{Det}(K(\cE))^{r} \otimes (\det K(\cE)|_{z \times \PP(\cE_{x})})^{1-d+r(1-g)}
\]
for any $z \in X$. From \eqref{eqn:sesforpiE}, we have $\mathrm{Det}(\pi^{\#}(\cE^{*})) \otimes \cO_{\PP(\cE_{x})}(1) = \mathrm{Det}(K(\cE))$. Since $\mathrm{Det}(\pi^{\#}(\cE^{*})) = \pi^{\#}\mathrm{Det}(\cE)$ and $\pi^{\#}(\cE^{*})|_{z \times \PP(\cE_{x})} \cong K(\cE)|_{z \times \PP(\cE_{x})}$ for any $z \ne x$, 
\begin{equation}
\begin{split}
	&\mathrm{Det}(K(\cE))^{r} \otimes (\det K(\cE)|_{z \times \PP(\cE_{x})})^{1-d+r(1-g))}\\ 
	&= \mathrm{Det}(K(\cE))^{r}\otimes (\det \pi^{\#}(\cE^{*})|_{z \times \PP(\cE_{x})})^{1-d+r(1-g)}
	= \mathrm{Det}(K(\cE))^{r} \otimes \Theta_{\rM(r, L)}^{-\ell(1-d+r(1-g))}\\
	&= \pi^{\#}(\mathrm{Det}(\cE^{*}))^{r} \otimes \cO_{\PP(\cE_{x})}(r) \otimes \Theta_{\rM(r, L)}^{-\ell(1-d+r(1-g))}\\
	&= \Theta_{\rM(r, L)}^{r\ell(1-g)-re} \otimes \cO_{\PP(\cE_{x})}(r) \otimes \Theta_{\rM(r, L)}^{-\ell(1-d+r(1-g))}
	= \cO_{\PP (\cE_{x})}(r) \otimes \Theta_{\rM(r, L)}^{1-\ell}.
\end{split}
\end{equation}
The second and the fourth equalities follow from the normalization of $\cE$ and Lemma \ref{lem:DetE}, respectively. 
\end{proof}

\begin{corollary}\label{cor:pullbackofample}
Let $k = (r, d-(r-1))$. Then $\Theta_{\rM(r, L(-(r-1)y))}^{k} = \cO_{\PP(\cE_{y}^{*})}(r) \otimes \Theta_{\rM(r, L)}^{1+\ell}$.
\end{corollary}

\begin{proof}
Within the identification $\rM(r, L) \cong \rM(r, L^{*})$, the normalized Poincar\'e bundle over $\rM(r, L^{*})$ is $\cE^{*}\otimes \Theta_{\rM(r, L)}$, and $c_{1}(\cE^{*}\otimes \Theta_{\rM(r, L)}) = c_{1}(\Theta_{\rM(r, L)}^{r-\ell})$. So $\rM(r, L, 1, \epsilon) \cong \rM(r, L^{*}, r-1, \epsilon) \cong \PP(\cE_{y}^{*} \otimes \Theta_{\rM(r, L)})$. When $a \to 1$, we obtain a contraction $\rM(r, L^{*}, r-1, a) \to \rM(r, L^{*}(-y)) \cong \rM(r, L(y)) \cong \rM(r, L(-(r-1)y))$. By Lemma \ref{lem:pullbackofample}, 
\[
	\Theta_{\rM(r, L(-(r-1)y))}^{k} = \Theta_{\rM(r, L^{*})}^{1-(r-\ell)} \otimes\cO_{\PP (\cE_{y}^{*} \otimes \Theta)}(r) = \Theta_{\rM(r, L)}^{1 - (r-\ell)} \otimes \cO_{\PP(\cE_{y}^{*})}(r) \otimes \Theta_{\rM(r, L)}^{r} = \cO_{\PP(\cE_{y}^{*})}(r) \otimes \Theta_{\rM(r, L)}^{1+\ell}.
\]
\end{proof}

From \eqref{eqn:twocontractions}, we obtain the nef cones of $\PP(\cE_{x})$ and $\PP(\cE_{x}^{*})$. The bigness in the statement follows from Lemma \ref{lem:pullbackofample} and Corollary \ref{cor:pullbackofample}. 

\begin{corollary}\label{cor:nefcone}
\begin{enumerate}
\item The nef cone of $\PP(\cE_{x}) = \rM(r, L, r-1, \epsilon)$ is generated by $\pi^{*}\Theta$ and $\cO_{\PP(\cE_{x})}(1)$. If $d \ne 1$, $\cO_{\PP(\cE_{x})}(1)$ is big. 
\item The nef cone of $\PP(\cE_{x}^{*}) = \rM(r, L, 1, \epsilon)$ is generated by $\pi^{*}\Theta$ and $\cO_{\PP(\cE_{x}^{*})}(1)\otimes \pi^{*}\Theta$. If $d \ne r-1$, $\cO_{\PP(\cE_{x}^{*})}(1) \otimes \pi^{*}\Theta$ is big. 
\end{enumerate}
\end{corollary}


\section{Main example}\label{sec:mainexample}

From now on, we focus on the case that $k = 2$ and $\bm = (r-1, 1)$. We set $\bx = (x_1, x_2)$ and $\ba = (a_{1}, a_{2})$. We use $\rM(r, L, \ba)$ for $\rM(r, L, \bm, \ba)$.

\subsection{Effective cone}\label{ssec:effectivecone}

Let $\Delta(s, e, \bn)$ be a wall on $[0,1]^{2}$ and let $\ba$ be a general point on it. Let $(E, V_{\bullet}) \in Y \subset \rM(r, L, \ba)$ be a general polystable parabolic bundle on the wall-crossing center. Then $(E, V_{\bullet}) \cong (F_{1}, W_{1 \bullet}) \oplus (F_{2}, W_{2\bullet})$ and $\mu(E, V_{\bullet}) =\mu(F_{1}, W_{1\bullet}) = \mu(F_{2}, W_{2\bullet})$.

There are two possibilities. First of all, it is possible that one of $F_{i}$'s (say $F_{1}$) has the largest possible intersection with the flags of $E$. That means, $\dim F_{1}|_{x_1} \cap V_{1} = \dim F_{1}|_{x_1} = s$ and $\dim F_{1}|_{x_2} \cap V_{2} = \dim V_{2} = 1$. We have an equality 
\[
	\frac{e+sa_{1} + a_{2}}{s} = \frac{d + (r-1)a_{1} + a_{2}}{r},
\]
or equivalently, $sa_{1} + (r-s)a_{2} = sd - re$. The slope of the line on the $(a_{1}, a_{2})$-plane is negative, so we will call the wall a \emph{negative wall}. To intersect with the interior of $[0, 1]^{2}$, it is necessary that $0 < sd - re < r$. Since these walls are $\Delta(s, e, (s, 1)) = \Delta(r-s, d-e, (r-s-1, 0))$, they are simple walls (Section \ref{ssec:generalthy}).

The other case is that $\dim F_{1}|_{x_1} \cap V_{1} = \dim F_{1}|_{x_1} = s$ and $\dim F_{1}|_{x_2} \cap V_{2} = 0$. Then 
\[
	\frac{e+sa_{1}}{s} = \frac{d+(r-1)a_{1} + a_{2}}{r}, 
\]
so $sa_{1} - sa_{2} = sd - re$. The slope of the wall $\Delta(s, e, (s, 0))$ is one and we call it a \emph{positive wall}. The nonempty intersection with $(0, 1)^{2}$ is equivalent to $-s < sd - re < s$. Since $(r, d) = 1$, $sd - re \ne 0$ and there is no wall passing through the origin. See Figure \ref{fig:wallchamber} for an example of the wall-chamber decomposition.

\begin{figure}[!ht]
\begin{tikzpicture}[scale=7]
	\draw[line width = 1pt] (0, 0) -- (1, 0);
	\draw[line width = 1pt] (1, 0) -- (1, 1);
	\draw[line width = 1pt] (0, 0) -- (0, 1);
	\draw[line width = 1pt] (0, 1) -- (1, 1);
	\draw[line width = 1pt, color=cyan, -latex] (0, 0) -- (1, 1);
	\draw[line width = 1pt, color=cyan, -latex] (0, 0) -- (1, 5/6);
	\draw[line width = 1pt] (0.33, 0) -- (0, 0.5);
	\draw[line width = 1pt] (1, 0.25) -- (0, 0.5);
	\draw[line width = 1pt] (0.75, 0) -- (0.5, 1);
	\draw[line width = 1pt] (1, 0.66) -- (0.5, 1);
	\draw[line width = 1pt] (0.75, 0) -- (1, 0.25);
	\draw[line width = 2pt] (0, 0.5) -- (0.5, 1);
	\draw[line width = 1pt] (0.33, 0) -- (1, 0.66);

	\node at (-0.05, -0.05) {$0$};
	\node at (1.05, -0.05) {$1$};
	\node at (-0.05, 1.05) {$1$};
	\node at (-0.05, 0.5) {$\frac{1}{2}$};
	\node at (0.33, -0.05) {$\frac{1}{3}$};
	\node at (0.75, -0.05) {$\frac{3}{4}$};
	\node at (1.05, 0.25) {$\frac{1}{4}$};
	\node at (1.05, 0.66) {$\frac{2}{3}$};
	\node at (0.5, 1.05) {$\frac{1}{2}$};
	\node at (0.2, 0.2) {$\Delta(3, 1, (3, 1))$};
	\node at (0.3, 0.5) {$\Delta(1, 0, (1, 1))$};
	\node at (0.55, 0.65) {$\Delta(4, 1, (4, 1))$};
	\node at (0.8, 0.8) {$\Delta(2, 0, (2, 1))$};
	\node at (0.15, 0.75) {$\Delta(2, 1, (2, 0)) = \Delta(4, 2, (4, 0))$};
	\node at (0.85, 0.5) {$\Delta(3, 1, (3, 0))$};
	\node at (0.9, 0.15) {$\Delta(4, 1, (4, 0))$};
\end{tikzpicture}
\caption{The wall-chamber decomposition for $r = 5$ and $d = 2$. The thick line segment for $\Delta(2, 1, (2, 0))$ is a multiple wall. Two arrows denote diagonal and nearly diagonal wall crossing directions.}\label{fig:wallchamber}
\end{figure}
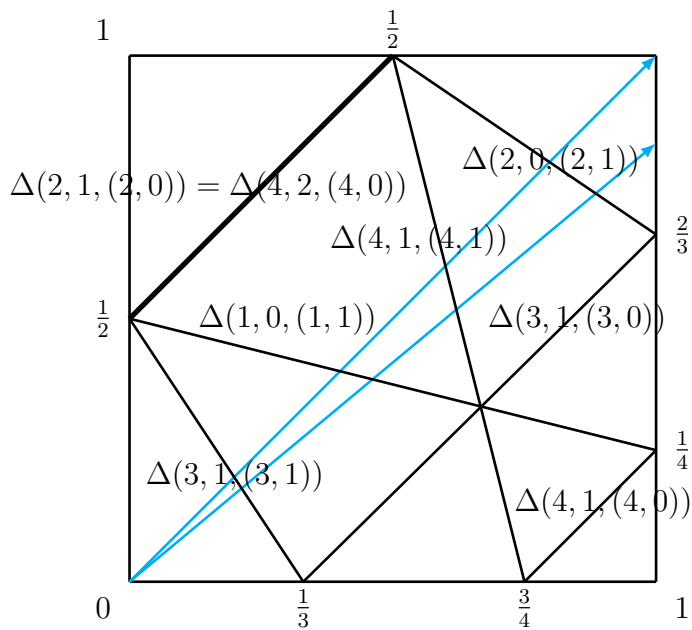

The line bundle $\Theta$ is the pull-back of $\Theta$ by $\rM(r, L, \ba) \dashrightarrow \rM(r, L)$. 

\begin{lemma}\label{lem:canonicaldivisor}
For a general weight $\ba$, the dualizing bundle of $\rM(r, L, \ba)$ is
\[
	\omega = \cO(-r, -r) \otimes \Theta^{-2}.
\]
\end{lemma}

\begin{proof}
We may assume that $\ba$ is sufficiently small and $\rM(r, L, \ba) \cong \PP(\cE_{x_1}) \times_{\rM(r, L)}\PP(\cE_{x_2}^{*})$. Apply the relative Euler sequence to $\PP(\cE_{x_1}) \to \rM(r, L)$ and $\PP(\cE_{x_1}) \times_{\rM(r, L)}\PP(\cE_{x_2}^{*}) \to \PP(\cE_{x_1})$. 
\end{proof}

\begin{proposition}\label{prop:effectivecone}
Let $\ba$ be a general weight. Then $\Eff(\rM(r, L, \ba))$ is generated by four extremal rays
\[
	\Theta, \cO(r, 0) \otimes \Theta^{1-\ell}, \cO(0, r) \otimes \Theta^{1+\ell}, \cO(r, r) \otimes \Theta.
\]
\end{proposition}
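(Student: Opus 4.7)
The approach is to combine two ingredients already recorded in the preceding subsection. First, by \cite[Proposition 3.4]{LM21}, the effective cone $\Eff(\rM(r,L,\ba))$ is identified, under the wall-crossing isomorphism $\Pic(\rM(r,L,\ba)) \cong \Pic(\rM(r,L,\be))$, with the affine cone over the weight polytope $[0,1]^2$ inside the three-dimensional real Picard space. Second, the dictionary stated just before equation (\ref{eqn:weighttobundle}) translates a generic weight $\ba' \in (0,1)^2$ into the divisor class $\cO(ra_1', ra_2') \otimes \Theta^{1 - \ell a_1' + \ell a_2'}$, up to positive scaling.

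Granted these two inputs, the proof becomes a direct computation. The cone over the square $[0,1]^2$ is a three-dimensional polyhedral cone whose extremal rays are in bijection with the four vertices $(0,0)$, $(1,0)$, $(0,1)$, $(1,1)$ of the square. Extending the weight-to-divisor dictionary continuously to the closure of the weight polytope and plugging in these four vertices returns exactly the four classes $\Theta$, $\cO(r,0) \otimes \Theta^{1-\ell}$, $\cO(0,r) \otimes \Theta^{1+\ell}$, and $\cO(r,r) \otimes \Theta$ listed in the statement. So it remains only to justify that the list is exhaustive and that each of the four rays is genuinely extremal.

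The substantive issue is the first ingredient, and in particular the inclusion $\Eff(\rM(r,L,\ba)) \subseteq \mathrm{cone}([0,1]^2)$. The reverse inclusion is almost immediate because any semiample class on a birational model $\rM(r,L,\ba')$ with $\ba' \in (0,1)^2$ remains effective after pullback to $\rM(r,L,\ba)$ through the wall-crossing flips. For the harder direction, one shows that each of the four candidate boundary divisors is the pullback of an ample class from a strictly lower-dimensional variety---the forgetful map $\rM(r,L,\ba) \to \rM(r,L)$ at the corner $(0,0)$, the generalized Hecke correspondences at $(1,0)$ and $(0,1)$ (cf.\ \cite[Proposition 2.9]{LM21}), and the composite at $(1,1)$---so each of them is not big, hence lies on the boundary of $\Eff$. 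Combined with $\dim \Pic \otimes \RR = 3$, this pins the effective cone down to the four-ray cone in the statement.

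I expect the last verification to be the main obstacle: controlling the boundary rational contractions and performing the intersection calculations needed to confirm that none of the four corner divisors admits a multiple that deforms into the interior of $\Eff$. Everything else is a routine check against the formula (\ref{eqn:weighttobundle}).
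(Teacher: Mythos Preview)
The paper does not prove this proposition; it is simply quoted from \cite[Proposition~5.8]{LM21} with no argument given here. So there is no in-paper proof to compare against.

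That said, your reconstruction is correct and is almost certainly what the proof in \cite{LM21} amounts to. Once \cite[Proposition~3.4]{LM21} identifies $\Eff(\rM(r,L,\ba))$ with the cone over $[0,1]^2$, the four extremal rays are forced to be the images of the four vertices under the dictionary $\ba' \mapsto \cO(ra_1', ra_2') \otimes \Theta^{1-\ell a_1' + \ell a_2'}$, and your evaluation at $(0,0)$, $(1,0)$, $(0,1)$, $(1,1)$ is correct on the nose. One small redundancy: your discussion of the ``harder direction'' (showing the corner divisors are not big via the forgetful and Hecke contractions) is really part of the content of \cite[Proposition~3.4]{LM21} itself, not of the present proposition. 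Given that result as a black box, the proposition is the pure computation you describe in your second paragraph, and nothing more.
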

\begin{proof}
By Proposition \ref{prop:weightanddivisor}, it is sufficient to find four divisors associated to four extremal parabolic weights. For any big $\QQ$-divisor $D \in \mathrm{int}\;\Eff(\rM(r, L, \ba))$, the associated birational model 
\[
	\rM(r, L, \ba)(D) := \proj \bigoplus_{m \ge 0}\rH^{0}(\rM(r, L, \ba), \cO(\lfloor mD\rfloor))
\] 
is $\rM(r, L, \ba')$. When $\ba' = (0, 0)$, the associated rational contraction is $\rM(r, L)$ and the associated divisor is a scalar multiple of $\Theta$. When $\ba' = (1/\ell, 0)$, by Section \ref{sec:nef}, the associated divisor is a multiple of $\cO(1, 0)$. When $\ba' = (1, 0)$, the associated rational contraction is $\rM(r, L(-x))$ and the associated divisor is a scalar multiple of $\cO(r, 0) \otimes \Theta^{1-\ell}$ by Lemma \ref{lem:pullbackofample}. For $\ba' = (0, 1/(r-\ell))$, we have a multiple of $\cO(0, 1) \otimes \Theta$. Finally, for $\ba' = (0, 1)$, a multiple of $\cO(0, r) \otimes \Theta^{1+\ell}$ is associated. 

By an elementary computation, for each point $\ba' = (a_1', a_2') \in [0, 1]^{2}$, the associated divisor $\cO(D) = \cO(c_1, c_2) \otimes \Theta^d$ can be written as a positive multiple of $\Theta \otimes (\cO(r, 0) \otimes \Theta^{-\ell})^{a_{1}'} \otimes (\cO(0, r) \otimes \Theta^{\ell})^{a_{2}'} = \cO(ra_{1}', ra_{2}') \otimes \Theta^{1 - \ell a_{1}' + \ell a_{2}'}$. A routine calculation shows that 
\begin{equation}\label{eqn:weighttobundle}
	(a_{1}', a_{2}') = \left(\frac{c_{1}}{rd + \ell c_{1} - \ell c_{2}}, \frac{c_{2}}{rd + \ell c_{1} - \ell c_{2}}\right).
\end{equation}
Thus, the last extremal ray, which is associated to $\ba' = (1, 1)$, is $\cO(r, r) \otimes \Theta$. 
\end{proof}


\subsection{Diagonal and nearly diagonal wall-crossings}\label{ssec:diagonal}

We say a wall crossing is a \emph{diagonal} one if we cross a wall $\Delta(s, e, \bn)$ while the weight $\ba$ is increasing along the line $a_{1} = a_{2}$. A wall crossing is a \emph{nearly diagonal} if we cross a wall $\Delta(s, e, \bn)$ while the weight $\ba$ is increasing along $ra_{1} = (r+1)a_{2}$. See Figure \ref{fig:wallchamber}. We explicitly compute these wall-crossings. 

\begin{proposition}\label{prop:diagonalissimple}
All walls that appear in diagonal or nearly diagonal wall crossings are simple. 
\end{proposition}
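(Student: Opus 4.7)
The plan is to classify the walls that can actually be crossed along each of the two distinguished lines and then verify both gcd conditions defining simplicity for each such wall. I would proceed in two steps: first show that no positive wall is crossed (positive walls are the ones for which simplicity might in principle fail), then show that every crossed negative wall is automatically simple.

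For the diagonal direction $a_1 = a_2$, substituting into the positive wall equation \eqref{eqn:positivewall} yields $0 = sd - re$, i.e.\ $sd = re$. Since $\gcd(r, d) = 1$ and $0 < s < r$, this has no solution, so no positive wall is crossed. For the nearly diagonal line I would parametrize it as $a_1 = (r+1)t,\; a_2 = rt$ with $t \in (0, 1/(r+1))$; the positive wall equation then becomes $st = sd - re$, so $t = (sd - re)/s$. A wall is actually crossed in this parameter range only when $sd - re$ is a positive integer, forcing $t \ge 1/s$. Combined with $s \le r - 1$, this gives $t \ge 1/(r - 1) > 1/(r + 1)$, contradicting $t < 1/(r + 1)$. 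Hence no positive wall is crossed along the nearly diagonal line either.

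It then remains to verify that every negative wall $\Delta(s, e, (s, 1))$ is simple. The first gcd, $\gcd(s, e, s, 1)$, is trivially $1$ thanks to the entry $1$. For the second, $\bm - \bn = ((r-1) - s,\; 0)$, so the condition reduces to $\gcd(r - s,\; d - e,\; (r-1) - s) = 1$; and since $(r - s) - ((r-1) - s) = 1$, already $\gcd(r - s,\; (r-1) - s) = 1$. Both gcds are therefore $1$, so the wall is simple.

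The argument is essentially elementary bookkeeping. The main point requiring any care is the nearly diagonal case, where the exclusion of positive walls rests on the inequality $1/s > 1/(r + 1)$, equivalently $s < r + 1$, which holds because $s \le r - 1$. The coprimality $\gcd(r, d) = 1$ is what eliminates positive walls on the diagonal. Beyond these two numerical inputs, the explicit form $\bn = (s, 1)$ of a negative wall makes simplicity automatic, so there is no serious obstacle.
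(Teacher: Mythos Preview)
Your argument is correct and follows essentially the same route as the paper: both proofs observe that every negative wall $\Delta(s,e,(s,1))$ is automatically simple via the gcd computation (the entry $1$ on one side, the consecutive integers $r-s$ and $r-1-s$ on the other), and then rule out positive walls on the diagonal by $(r,d)=1$ and on the nearly diagonal line by the bound $|(sd-re)/s|\ge 1/(r-1)>1/(r+1)$. The only difference is organizational---you first eliminate positive walls and then treat negative ones, whereas the paper does the reverse---but the content is the same.
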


\begin{proof}
For a negative wall $\Delta(s, e, (s, 1)) = \Delta(r-s, d-e, (r-s-1, 0))$, the greatest common divisor for both $\{s, e, s, 1\}$ and $\{r-s, d-e, r-s-1, 0\}$ are one. So every negative wall is simple. Then all multiple walls are positive walls, and hence parallel to the diagonal line $a_{1} = a_{2}$. Such a wall is given by $a_{1} - a_{2} = (sd - re)/s$. Since $(r, d) = 1$, the right hand side is nonzero and it is disjoint from the diagonal line $a_{1} = a_{2}$. Moreover, $|(sd - re)/s| \ge 1/s \ge 1/(r-1)$. It is a routine calculation to check that these walls do not intersect with $ra_{1} = (r+1)a_{2}$ on $[0, 1]^{2}$.
\end{proof}

\begin{remark}
Several walls can meet at a weight during diagonal or nearly diagonal wall crossings. In this case, we may perturb the weight slightly, then the wall-crossing can be decomposed into a composition of several simple wall-crossings. Thus, we may assume that all wall-crossings are simple.
\end{remark}

We can compute the dimension of all simple wall-crossing centers $Y_{\pm}$. For the theoretical background and details, see \cite[Section 4]{MY21}. Here we leave the computation for a negative wall $\Delta(s, e, (s, 1))$. We keep the notation in the diagram \eqref{eqn:wallcrossing}. 

For a point $((E^{+}, V_{\bullet}^{+}), (E^{-}, V_{\bullet}^{-})) \in Y_{0}$, it is sufficient to evaluate $\dim \PP \Ext^{1}((E^{\pm}, V_{\bullet}^{\pm}), (E^{\mp}, V_{\bullet}^{\mp}))$. By the Serre duality for parabolic bundles, 
\[
	\Ext^{1}((E^{-}, V_{\bullet}^{-}), (E^{+}, V_{\bullet}^{+})) \cong \spHom((E^{+}\otimes \omega^{*}(-\bx), V_{\bullet}^{+}), (E^{-}, V_{\bullet}^{-}))^{*}
\]
(\cite[Proposition 3.7]{Yok95}). There is an exact sequence of vector spaces (\cite[Section 4.2]{MY21})
\begin{equation}\label{eqn:les}
\begin{split}
	0 & \to \spHom((E^{+} \otimes \omega^{*}(-\bx), V_{\bullet}^{+}), (E^{-}, V_{\bullet}^{-})) \to \rHom(E^{+}\otimes \omega^{*}(-\bx), E^{-}) \\
	& \to \bigoplus_{i=1}^{2}\rHom(E^{+}\otimes \omega^{*}(-\bx)|_{x_{i}}, E^{-}|_{x_{i}})/\mathrm{N}_{x_{i}}((E^{+}\otimes \omega^{*}(-\bx), V_{\bullet}^{+}), (E^{-}, V_{\bullet}^{-})) \to 0,
\end{split}
\end{equation}
where $N_{x}((E^{+}, V_{\bullet}^{+}), (E^{-}, V_{\bullet}^{-}))$ is the subspace of $\rHom(E^{+}|_{x}, E^{-}|_{x})$ which is strongly parabolic at $x$. Since the parabolic weight for $V_{i}^{+}$ and $V_{i}^{-}$ are the same, 
\[
\begin{split}
	&N_{x_{i}}((E^{+}\otimes \omega^{*}(-\bx), V_{\bullet}^{+}), (E^{-}, V_{\bullet}^{-}))\\
	& = \{f \in \rHom(E^{+}\otimes \omega^{*}(-\bx)|_{x_{i}}, E^{-}|_{x_{i}})\;|\; f(E^{+}\otimes \omega^{*}(-\bx)|_{x_{i}}) \subset V_{i}^{-}, f(V_{i}^{+}) = 0\}.
\end{split}
\]
From $\dim V_{1}^{+}= s = \rk E^{+}$ and $\dim V_{2}^{-} = 0$, $\dim N_{x_{i}}((E^{+}\otimes \omega^{*}(-\bx), V_{\bullet}^{+}), (E^{-}, V_{\bullet}^{-})) = 0$ for both $i=1,2$. Now 
\begin{equation}\label{eqn:inequality1}
\begin{split}
	& \dim \Ext^{1}((E^{-}, V_{\bullet}^{-}), (E^{+}, V_{\bullet}^{+}))\\ &= \dim \rHom(E^{+}\otimes \omega^{*}(-\bx), E^{-}) - 2s(r-s) \\
	& \ge \chi(E^{+ *} \otimes E^{-} \otimes \omega(\bx)) - 2s(r-s) = sd - re + s(r-s)(g-1).
\end{split}
\end{equation}
By the same method, we obtain 
\begin{equation}\label{eqn:inequality2}
	\dim \Ext^{1}((E^{+}, V_{\bullet}^{+}), (E^{-}, V_{\bullet}^{-})) \ge 
	re - sd + s(r-s)(g-1) + r,  
\end{equation}
so $\dim \Ext^{1}((E^{-}, V_{\bullet}^{-}), (E^{+}, V_{\bullet}^{+})) + \dim \Ext^{1}((E^{+}, V_{\bullet}^{+}), (E^{-}, V_{\bullet}^{-})) \ge 2s(r-s)(g-1) + r$.
On the other hand, 
\[
\begin{split}
	&\dim \PP \Ext^{1}((E^{-}, V_{\bullet}^{-}), (E^{+}, V_{\bullet}^{+})) + \dim \PP \Ext^{1}((E^{+}, V_{\bullet}^{+}), (E^{-}, V_{\bullet}^{-}))\\
	& = \dim \rM(r, L, \ba) - \dim \rM(s, e, \bn, \ba) \times_{\Pic^{e}(X)}\rM(r-s, d-e, \bm - \bn, \ba) - 1\\
	& = 2s(r-s)(g-1) + r -2.
\end{split}
\]
Therefore, we obtain that \eqref{eqn:inequality1} and \eqref{eqn:inequality2} are indeed equalities. In summary:

\begin{proposition}\label{prop:fiberdim}
Let $\Delta(s, e, (s, 1))$ be a negative wall. For the contraction map $\pi_{\pm} : \rM(r, L, \ba^{\pm}) \to \rM(r, L, \ba)$ in \eqref{eqn:wallcrossing}, the dimension of the exceptional fiber of $\pi_{+}$ (resp. $\pi_{-}$) is $(re-sd) + s(r-s)(g-1) + r-1$ (resp. $(sd - re) + s(r-s)(g-1)-1$). 
\end{proposition}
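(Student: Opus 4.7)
The plan is to identify the exceptional fibers as projectivized $\Ext^1$-groups, pin down the individual $\Ext^1$-dimensions by squeezing a Riemann--Roch lower bound against an upper bound that comes from the global geometry of the flip, and then subtract one. Since the assumption is on a simple wall, I can import the projective-bundle description and use parabolic Serre duality freely.

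The first step is to recall from \cite[Lemma 1.4]{Yok95} that, over a point $((E^{+}, V_{\bullet}^{+}), (E^{-}, V_{\bullet}^{-})) \in Y_{0}$, the fiber of $\pi_{\pm}$ in diagram \eqref{eqn:wallcrossing} is $\PP \Ext^{1}((E^{\pm}, V_{\bullet}^{\pm}), (E^{\mp}, V_{\bullet}^{\mp}))$. Next, I would apply parabolic Serre duality (\cite[Proposition 3.7]{Yok95}) to rewrite each $\Ext^{1}$ as the dual of a strongly parabolic Hom of a twisted bundle, and then feed this into the defining sequence \eqref{eqn:les}. For the wall datum $\bn = (s, 1)$ under $\bm = (r-1, 1)$, one has $\dim V_{1}^{+} = s = \rk E^{+}$ and $\dim V_{2}^{-} = 0$, which makes the strongly parabolic local contributions $N_{x_{i}}$ vanish. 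Consequently each $\spHom$ equals $\rHom$ of the twisted bundle minus $2s(r-s)$, and a Riemann--Roch estimate on $\chi(E^{+\,*}\otimes E^{-} \otimes \omega(\bx))$ produces the two lower bounds \eqref{eqn:inequality1} and \eqref{eqn:inequality2}, whose sum is $2s(r-s)(g-1) + r$.

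The second step is the matching upper bound. Since $Y_{\pm} \to Y_{0}$ is a projective bundle with fiber dimension $\dim \PP \Ext^{1}((E^{\pm}, V_{\bullet}^{\pm}), (E^{\mp}, V_{\bullet}^{\mp}))$, and the simple flip structure yields $\dim Y_{+} + \dim Y_{-} = \dim \rM(r, L, \ba) + \dim Y_{0} - 1$, one obtains the identity already displayed in the excerpt,
\[
\dim \PP \Ext^{1}(-,+) + \dim \PP \Ext^{1}(+,-) = \dim \rM(r, L, \ba) - \dim Y_{0} - 1 = 2s(r-s)(g-1) + r - 2.
\]
Comparing with the total from Step 1 — whose sum before projectivizing is $2s(r-s)(g-1) + r$ — the discrepancy of $2$ is exactly absorbed by the two $\PP$'s. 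Hence both \eqref{eqn:inequality1} and \eqref{eqn:inequality2} must be equalities, which individually determines each fiber dimension and gives the stated formulas after subtracting one.

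The main potential pitfall is bookkeeping: verifying that $\dim Y_{0}$ and $\dim \rM(r, L, \ba)$ are computed with the correct contribution from the partial flag multiplicity $\bm = (r-1, 1)$ on each factor, and that the role of $(E^{+}, V_{\bullet}^{+})$ versus $(E^{-}, V_{\bullet}^{-})$ (i.e.\ which side destabilizes as $\ba$ crosses the wall) is matched correctly with $\pi_{+}$ versus $\pi_{-}$, so that the asymmetric $+r$ in \eqref{eqn:inequality2} lands on the intended contraction. Once these orientations are fixed, the argument is essentially linear algebra together with Riemann--Roch.
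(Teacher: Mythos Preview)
Your proposal is correct and follows essentially the same route as the paper: identify the fibers as $\PP\Ext^{1}$ via \cite{Yok95}, use parabolic Serre duality and the sequence \eqref{eqn:les} with the vanishing of the $N_{x_i}$ to get the Riemann--Roch lower bounds \eqref{eqn:inequality1} and \eqref{eqn:inequality2}, and then force equality by comparing their sum against the dimension count $\dim \rM(r,L,\ba) - \dim Y_{0} - 1 = 2s(r-s)(g-1)+r-2$. The only cosmetic difference is that you phrase the upper bound via $\dim Y_{+} + \dim Y_{-} = \dim \rM(r,L,\ba) + \dim Y_{0} - 1$, which is equivalent.
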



\section{Cohomology via wall-crossing of derived category}\label{sec:cohomologyderivedcategory}

To prove the main theorems, a critical technical step is to identify the cohomology groups of the bundles on different birational models. Halpern-Leistner and Ballard-Favero-Katzarkov provided a systematic way to study the derived category of a variation of GIT (\cite{HL15, BFK19}). In this section, we review their works, in particular the quantization theorem. Technically, the results treat the derived category of a quotient stack. However, the following (well-known) lemma and its corollary show that it can be applied to the cohomology computation on the coarse moduli space. Let $\rD^{\mathrm{per}}(\rM)$ be the category of perfect complexes over $\rM$.

\begin{lemma}\label{lem:dbofcoarsemoduli}
Let $\cM$ be a smooth Artin stack and $\pi : \cM \to \rM$ be its good moduli space. Then $L\pi^* :\rD^{\mathrm{per}}(\rM) \to \rD^b(\cM)$ is fully faithful. 
\end{lemma}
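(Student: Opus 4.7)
The plan is the standard adjunction argument: show that $R\pi_{*}L\pi^{*}F \cong F$ for every $F \in \rD^{\mathrm{per}}(\rM)$, and then conclude fully faithfulness by the $(L\pi^{*}, R\pi_{*})$-adjunction. I would first record the two structural inputs that a good moduli space morphism provides, following Alper. Since $\pi$ is cohomologically affine, the pushforward $\pi_{*}$ is exact on quasi-coherent sheaves, so $R\pi_{*}$ agrees with $\pi_{*}$ on the heart; and by the defining property of a good moduli space, $\pi_{*}\cO_{\cM} = \cO_{\rM}$. Together these give the key identity
\[
R\pi_{*}\cO_{\cM} = \cO_{\rM}.
\]

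Next I would invoke the projection formula. For a perfect complex $F$ on $\rM$, the projection formula
\[
R\pi_{*}(L\pi^{*}F \otimes^{L} \cH) \cong F \otimes^{L} R\pi_{*}\cH
\]
holds on $\cM$; specializing to $\cH = \cO_{\cM}$ yields $R\pi_{*}L\pi^{*}F \cong F \otimes^{L} R\pi_{*}\cO_{\cM} \cong F$. For this step to apply, one needs the projection formula in the setting of a morphism from a smooth Artin stack to its good moduli space. The cleanest way I would set this up is to reduce, via a smooth cover of $\rM$ and the local structure theorem for good moduli spaces, to the case of a quotient of an affine scheme by a reductive group, where the projection formula is classical.

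Now fully faithfulness is a formal consequence. For $F, G \in \rD^{\mathrm{per}}(\rM)$, the adjunction gives
\[
\rHom_{\cM}(L\pi^{*}F, L\pi^{*}G) \cong \rHom_{\rM}(F, R\pi_{*}L\pi^{*}G) \cong \rHom_{\rM}(F, G),
\]
and one verifies that the composite coincides with the natural map induced by $L\pi^{*}$. The main obstacle I anticipate is purely bookkeeping: ensuring that the projection formula and the identification $R\pi_{*}\cO_{\cM} = \cO_{\rM}$ are available with the correct derived/coherence hypotheses for a good moduli space morphism of Artin stacks, rather than any genuinely geometric difficulty. Once those are in place, the argument is essentially formal.
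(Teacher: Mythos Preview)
Your argument is correct and follows essentially the same route as the paper: both use $R\pi_{*}\cO_{\cM}\cong\cO_{\rM}$, the projection formula, and $(L\pi^{*},R\pi_{*})$-adjunction. The only difference is in how the projection formula for Artin stacks is justified: the paper simply cites Hall--Rydh \cite[Corollary 4.12]{HR17} and Olsson \cite[Proposition 9.3.6]{Ols16}, whereas you propose a reduction via the local structure theorem, which is a valid but unnecessary detour given those references.
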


\begin{proof}
We have an isomorphism $\cO_{\rM} \to R \pi_* \cO_{\cM}$ from the definition of a good moduli space. For any $F^{\bullet}, G^{\bullet} \in \rD^{\mathrm{per}}(\rM)$ and $i \in \ZZ$, we have isomorphisms 
\[
	\rHom(L\pi^* F^{\bullet}, L\pi^* G^{\bullet}[i]) \cong \rHom(F^{\bullet}, R\pi_* L\pi^*  G^{\bullet}[i]) \cong \rHom(F^{\bullet}, G^{\bullet}[i])
\]
by the adjunction formula and the projection formula (\cite[Corollary 4.12]{HR17}, \cite[Proposition 9.3.6]{Ols16}). Therefore we see that $L\pi^*$ is fully faithful.
\end{proof}

\begin{corollary}\label{cor:dbofcoarsemoduli}
We retain the same setup.
\begin{enumerate}
\item If $L$ is a vector bundle over $\rM$, then $\rH^{i}(\rM, L) \cong \rH^{i}(\cM, \pi^{*}L)$. 
\item If $\rM$ is smooth, $L\pi^* : \rD^b(\rM) \to \rD^b(\cM)$ is fully faithful. In particular, for any $F^{\bullet} \in \rD^{b}(\rM)$, $\rHom^{i}(\cO_{\rM}, F^{\bullet}) \cong \rHom^{i}(\cO_{\cM}, L\pi^{*}F^{\bullet})$. 
\end{enumerate}
\end{corollary}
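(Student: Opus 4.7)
My plan is to derive both parts of the corollary as essentially immediate consequences of Lemma \ref{lem:dbofcoarsemoduli}, since that lemma already supplies the key fully faithfulness on perfect complexes. The two issues to handle are (a) that $L$ in part (1) is a single vector bundle rather than a general perfect complex, and (b) that in part (2) we need full faithfulness on all of $\rD^{b}(\rM)$, not just on $\rD^{\mathrm{per}}(\rM)$.

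For part (1), I would simply translate sheaf cohomology into $\Hom$'s in the derived category. Since both $\cO_{\rM}$ and the vector bundle $L$ are perfect complexes on $\rM$, Lemma \ref{lem:dbofcoarsemoduli} applied to $F^{\bullet} = \cO_{\rM}$ and $G^{\bullet} = L$ yields
\[
\rHom(\cO_{\rM}, L[i]) \;\cong\; \rHom(L\pi^{*}\cO_{\rM}, L\pi^{*}L[i]) \;=\; \rHom(\cO_{\cM}, \pi^{*}L[i]),
\]
where I have used that $L\pi^{*}\cO_{\rM} = \cO_{\cM}$ and that $L\pi^{*}L = \pi^{*}L$ because $L$ is locally free (hence flat). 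The left side is $\rH^{i}(\rM, L)$ and the right side is $\rH^{i}(\cM, \pi^{*}L)$, which is the claim.

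For part (2), the point is that when $\rM$ is smooth, every object of $\rD^{b}(\rM)$ is globally resolvable by a bounded complex of locally free sheaves of finite rank, i.e. $\rD^{b}(\rM) = \rD^{\mathrm{per}}(\rM)$. Therefore Lemma \ref{lem:dbofcoarsemoduli} immediately upgrades to full faithfulness of $L\pi^{*} : \rD^{b}(\rM) \to \rD^{b}(\cM)$. The ``in particular'' statement is then the special case $F^{\bullet} \in \rD^{b}(\rM)$ together with $\cO_{\rM} \in \rD^{\mathrm{per}}(\rM)$, giving
\[
\rHom^{i}(\cO_{\rM}, F^{\bullet}) \;\cong\; \rHom^{i}(L\pi^{*}\cO_{\rM}, L\pi^{*}F^{\bullet}) \;=\; \rHom^{i}(\cO_{\cM}, L\pi^{*}F^{\bullet}).
\]

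There is no real obstacle here beyond bookkeeping; the substance of the argument was already absorbed into Lemma \ref{lem:dbofcoarsemoduli} via the adjunction and projection formula. The only points that require a sentence of justification are the identifications $\rD^{b}(\rM) = \rD^{\mathrm{per}}(\rM)$ for smooth $\rM$ and the flatness of a vector bundle that lets $L\pi^{*}$ be replaced by $\pi^{*}$ in (1).
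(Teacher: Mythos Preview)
Your proposal is correct and matches the paper's intent: the corollary is stated without proof, as an immediate consequence of Lemma~\ref{lem:dbofcoarsemoduli}, and the two points you supply (that a vector bundle and $\cO_{\rM}$ are perfect, and that $\rD^{b}(\rM)=\rD^{\mathrm{per}}(\rM)$ when $\rM$ is smooth) are exactly the observations needed to pass from the lemma to the corollary.
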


\subsection{Variation of GIT and derived category}\label{ssec:VGITderivedcategory}

Let $V$ be a smooth quasi projective variety equipped with a reductive group $G$-action and $A$ be a linearization. The GIT quotient $V \git_{A}G$ is the good moduli space of the quotient stack $[V^{ss}(A)/G]$. Halpern-Leistner showed that, for a collection of integers $w = (w_{i})$ for each Kempf-Ness stratum of the unstable locus, $\rD^{b}([V/G])$ has a semiorthogonal decomposition 
\[
	\rD^{b}([V/G]) = \langle \rD^{b}_{[V^{us}(A)/G]}([V/G])_{<w}, \mathbf{G}_{w}, \rD^{b}_{[V^{us}(A)/G]}([V/G])_{\ge w})\rangle,
\]
and moreover, the restriction functor $i^{*} : \mathbf{G}_{w} \to \rD^{b}([V^{ss}(A)/G])$ is an equivalence of categories (\cite[Theorem 2.10]{HL15}). 

From now on, we assume that there is only one unstable stratum $S$ that is a smooth subvariety. It is determined by a one-parameter subgroup $\lambda(t)$ which minimizes the normalized weight $\wt_{\lambda}A/|\lambda|$ over the $\lambda$-fixed locus $Z \subset S$. Since a choice of $w$ is arbitrary, we may set $w = 0$. Under this condition, $\mathbf{G}_{w}$ is characterized as the subcategory of complexes $F^{\bullet}$ such that the $\lambda$-weights of the hypercohomology $\cH^{*}(F^{\bullet}|_{Z})$ is supported on $[w, w+\eta)$ (\cite[Lemma 2.9]{HL15}). Here $\eta$ is the $\lambda$-weight of the top wedge product of $N_{S/V}^{*}|_{Z}$. 

The following theorem is a key ingredient for our cohomology computation. 

\begin{theorem}[Quantization Theorem \protect{\cite[Theorem 3.29]{HL15}}]\label{thm:quantization}
For $F^{\bullet} \in \rD^{b}([V/G])$, suppose that the $\lambda$-weights of $\cH^{*}(F^{\bullet}|_{Z})$ are supported on $(-\infty, \eta)$. Then 
\[
	\rH^{i}([V/G], F^{\bullet}) \cong \rH^{i}([V^{ss}(A)/G], F^{\bullet}|_{[V^{ss}(A)/G]}).
\]
\end{theorem}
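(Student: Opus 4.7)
The plan is to apply the Halpern--Leistner semiorthogonal decomposition recalled in this subsection, split $F^{\bullet}$ accordingly, and verify that each piece contributes correctly to the cohomology comparison. Setting $w = 0$, the hypothesis that the $\lambda$-weights of $\cH^{*}(F^{\bullet}|_{Z})$ lie in $(-\infty, \eta)$ places $F^{\bullet}$ in the subcategory $\langle \rD^{b}_{[V^{us}(A)/G]}([V/G])_{<0}, \mathbf{G}_{0}\rangle$, yielding a distinguished triangle
\[
	F' \to F^{\bullet} \to F''
\]
with $F' \in \rD^{b}_{[V^{us}(A)/G]}([V/G])_{<0}$ and $F'' \in \mathbf{G}_{0}$.

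For the piece $F''$, the window equivalence $i^{*} \colon \mathbf{G}_{0} \simeq \rD^{b}([V^{ss}(A)/G])$ shows that $F''$ is the essentially unique lift of $i^{*}F''$. I would then upgrade this categorical equivalence to an equality of derived global sections $R\Gamma([V/G], F'') = R\Gamma([V^{ss}(A)/G], i^{*}F'')$ by arguing that the potential correction term supported on $[S/G]$ vanishes. This correction is computed via the $\lambda$-equivariant retraction $\pi \colon [S/G] \simeq [Y_{\lambda}/P_{\lambda}] \to [Z/L_{\lambda}]$, where $Y_{\lambda}$ is the attracting locus and $L_{\lambda} = Z_{G}(\lambda)$, together with the Koszul resolution of $i_{*}\cO_{S}$ by the exterior powers $\bigwedge^{j} N_{S/V}^{*}$: each graded piece is controlled by $F''|_{Z} \otimes \bigwedge^{j} N_{S/V}^{*}|_{Z}$ with a determinantal twist of total $\lambda$-weight $\eta$, and the hypothesis $[0, \eta)$ on the weights of $F''|_{Z}$, together with the strict inequality on the right, forces the zero $\lambda$-weight part of every piece to be empty. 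For $F'$, the restriction $i^{*}F'$ vanishes by support, and the same accounting with $F'|_{Z}$ having strictly negative weights gives $R\Gamma([V/G], F') = 0$. Taking the long exact sequence associated to the triangle above then yields the claimed isomorphism.

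The step I expect to be the main obstacle is the careful $\lambda$-weight bookkeeping in the Koszul-plus-retraction computation of the correction term: one must track the determinantal twist of total weight $\eta$, the codimension shift, and the weights of each $\bigwedge^{j} N_{S/V}^{*}|_{Z}$ so that the strict inequality $< \eta$ in the hypothesis translates precisely into strict negativity of $\lambda$-weights after all tensoring and twisting. This is where an off-by-one error is most likely to intrude, and it is precisely the openness of the interval $(-\infty, \eta)$ on the right that prevents the boundary case from contributing a nonzero term; once this boundary check is secured, the two vanishing arguments above combine with the long exact sequence to give the theorem.
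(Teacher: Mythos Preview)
This theorem is not proved in the paper; it is quoted from \cite[Theorem 3.29]{HL15} and used as a black box. There is no argument in the paper to compare your proposal against.

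That said, your sketch is in the spirit of \cite{HL15}. The one structural issue worth flagging is the opening step: you assert that the weight bound on $\cH^{*}(F^{\bullet}|_{Z})$ forces $F^{\bullet} \in \langle \rD^{b}_{[V^{us}(A)/G]}([V/G])_{<0}, \mathbf{G}_{0}\rangle$, but this does not follow formally from the semiorthogonal decomposition together with the weight characterization of $\mathbf{G}_{0}$ alone (which is all the present paper records). Showing that the $\rD^{b}_{[V^{us}(A)/G]}([V/G])_{\ge 0}$-component of $F^{\bullet}$ vanishes already requires the baric truncation machinery of \cite{HL15}, and that machinery is essentially the same Koszul-and-retraction weight computation you later invoke for $F''$. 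So as written the argument front-loads a claim whose justification is the real content of the proof; you should either cite the relevant lemma from \cite{HL15} directly for that step, or reorganize around the local-cohomology triangle $R\underline{\Gamma}_{[S/G]}F^{\bullet} \to F^{\bullet} \to Rj_{*}j^{*}F^{\bullet}$ and show the first term has vanishing global sections via the weight condition.
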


We apply the above result to the variation of GIT setup. Let $A_{0}$ be a linearization such that $V^{ss}(A_{0}) \ne V^{s}(A_{0})$. For a sufficiently small $\epsilon$ and a linearized ample line bundle $A$, let $A_{\pm} := A_{0} \pm \epsilon A$. We assume that $V^{ss}(A_{\pm}) = V^{s}(A_{\pm})$. Assume further that $V^{s}(A_{\pm}) = V^{ss}(A_{0}) \setminus S_{\pm}$ and $S_{\pm}$ are smooth irreducible varieties. If $\lambda_{\pm}$ are the one-parameter subgroups describing the Kempf-Ness strata $S_{\pm}$, then $\lambda_{-} = \lambda_{+}^{-1}$. Let $Z \subset S_{+} \cap S_{-}$ be the $\lambda_{\pm}$-fixed locus, and $\eta_{\pm}$ be the $\lambda_{\pm}$-weight of the top wedge product of $N_{S_{\pm}/V}^{*}|_{Z}$. 

\begin{theorem}[\protect{\cite[Theorem 3.15]{TT21}}]\label{thm:cohomologyidentification}
If $\lambda_{-}$-weights of $\cH^{*}(F^{\bullet}|_{Z})$ are supported on $(-\eta_{+}, \eta_{-})$,
\[
	\rH^{i}([V^{s}(A_{-})/G], F^{\bullet}|_{[V^{s}(A_{-})/G]}) \cong \rH^{i}([V^{ss}(A_{0})/G], F^{\bullet}) \cong \rH^{i}([V^{s}(A_{+})/G], F^{\bullet}|_{[V^{s}(A_{+})/G]}).
\]
\end{theorem}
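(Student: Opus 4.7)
The plan is to apply the Quantization Theorem (Theorem \ref{thm:quantization}) twice, once for each of the two chambers adjacent to the wall linearization $A_{0}$. I treat $[V^{ss}(A_{0})/G]$ as the ambient smooth stack, and view $[V^{s}(A_{\pm})/G] = [V^{ss}(A_{0})/G] \setminus [S_{\pm}/G]$ as obtained by removing a single Kempf-Ness stratum. The virtue of this setup is that each half of the claimed isomorphism is then an instance of Quantization applied to the pair $(S_{\pm}, \lambda_{\pm})$.

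The first step is to translate the weight hypothesis into both the $\lambda_{+}$ and $\lambda_{-}$ languages. Because $A_{\pm}$ are symmetric perturbations of $A_{0}$, the minimizing one-parameter subgroups of the two strata satisfy $\lambda_{-} = \lambda_{+}^{-1}$. Consequently, a $\lambda_{-}$-weight $w$ on $\cH^{*}(F^{\bullet}|_{Z})$ is the same datum as a $\lambda_{+}$-weight $-w$. Thus the hypothesis that the $\lambda_{-}$-weights of $\cH^{*}(F^{\bullet}|_{Z})$ lie in $(-\eta_{+}, \eta_{-})$ is equivalent to saying that the $\lambda_{+}$-weights lie in $(-\eta_{-}, \eta_{+})$. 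In particular,
\[
(-\eta_{+}, \eta_{-}) \subset (-\infty, \eta_{-}), \qquad (-\eta_{-}, \eta_{+}) \subset (-\infty, \eta_{+}),
\]
which are exactly the two hypotheses needed to feed into Theorem \ref{thm:quantization}.

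The second step is the application itself. Taking the ambient stack to be $[V^{ss}(A_{0})/G]$, with unique unstable stratum $S_{+}$ (destabilized by $\lambda_{+}$), Theorem \ref{thm:quantization} gives the isomorphism
\[
\rH^{i}([V^{ss}(A_{0})/G], F^{\bullet}) \cong \rH^{i}([V^{s}(A_{+})/G], F^{\bullet}|_{[V^{s}(A_{+})/G]}).
\]
Running exactly the same argument with $(S_{-}, \lambda_{-})$ in place of $(S_{+}, \lambda_{+})$ produces the left isomorphism. Concatenating gives the full statement.

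The only genuine subtlety—more a bookkeeping point than an obstacle—is checking that the Quantization Theorem, as stated for a smooth ambient stack $[V/G]$, can be invoked with $[V^{ss}(A_{0})/G]$ as that ambient stack. This is legitimate because $V^{ss}(A_{0})$ is itself a smooth $G$-invariant open, and each $S_{\pm}$ is a Kempf-Ness stratum of $[V^{ss}(A_{0})/G]$ with respect to the ample $A_{\pm}$; the values $\eta_{\pm}$ are the $\lambda_{\pm}$-weights of $\det N^{*}_{S_{\pm}/V}|_{Z}$ and are intrinsic to this restricted picture. This is precisely the formalism used in \cite{HL15} and \cite{TT21}, so no further argument is required beyond pointing to it.
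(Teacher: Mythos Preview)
Your proposal is correct and follows essentially the same approach as the paper: apply the Quantization Theorem once for the stratum $(S_{-},\lambda_{-})$ to obtain the left isomorphism, and once for $(S_{+},\lambda_{+})$---using $\lambda_{-}=\lambda_{+}^{-1}$ to translate the weight hypothesis---to obtain the right one. The paper's own justification is the single sentence ``the first isomorphism follows from Theorem \ref{thm:quantization} and the second one is from the theorem and $\lambda_{-}=\lambda_{+}^{-1}$,'' so your write-up simply spells out the same argument in more detail.
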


The first isomorphism follows from Theorem \ref{thm:quantization} and the second one is from the theorem and $\lambda_{-} = \lambda_{+}^{-1}$. In particular, for any line bundle $E$ on $[V^{ss}(A_{0})/G]$, if the magnitude of the $\lambda_{-}$-weight is `not too big,' then the cohomology of $E$ on both sides of the wall can be identified.

\subsection{Weight computation}\label{ssec:weight}

All $\rM(r, L, \ba)$ are constructed by GIT and they are connected by the variation of GIT (Section \ref{ssec:GITconstruction}). For any simple wall-crossing, the technical assumptions we made in Section \ref{ssec:VGITderivedcategory} hold. In this section, we compute the $\lambda_{-}$-weight $\mathrm{wt}_{\lambda_{-}}F$ for every line bundle $F$ and each simple wall that occurs during the diagonal and nearly diagonal wall-crossings. 

Take a wall $\Delta(s, e, \bn)$ and pick a general weight $\ba = (a_{1}, a_{2}) \in \Delta(s, e, \bn)$. Let $A$ be an ample divisor associated to $\ba$. Then $\widetilde{R}^{ss}(A)\git \SL_{\chi} \cong \rM(r, L, \ba)$. For two nearby weights $\ba_{\pm}:= (a \pm \epsilon, a \pm \epsilon)$, let $A_{\pm}$ be a line bundle such that $\rM(r, L, \ba_{\pm}) \cong \widetilde{R}^{s}(A_{\pm})\git \SL_{\chi}$. 

\begin{proposition}\label{prop:thetalambdaweight}
Let $\Delta(s, e,\bn)$ be a simple wall, $\ba \in \Delta(s, e, \bn)$, and $A$ be an associated line bundle. Let $\lambda_{-}$ be the one-parameter subgroup associated to the stratum $S_{-} := \widetilde{R}^{ss}(L_{0}) \setminus \widetilde{R}^{s}(L_{-})$. Over the $\lambda_{-}$-fixed locus $Z \subset S_{-}$,
\[
	\mathrm{wt}_{\lambda_{-}}\Theta = -\chi(sd - re).
\]
\end{proposition}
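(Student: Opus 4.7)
The plan is to identify the one-parameter subgroup $\lambda_-$ in terms of the direct sum structure of the $\lambda_-$-fixed locus $Z$, and then to compute the weight of $\Theta$ by pulling it back through the $\wedge^r$-embedding of $\widetilde R$ into $Z = \PP\rHom(\wedge^r \CC^\chi, \rH^0(L(rm)))^*$.

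First I would describe a $\lambda_-$-fixed point $z \in Z$. Such a $z$ represents a polystable parabolic bundle $(E^+, V_\bullet^+) \oplus (E^-, V_\bullet^-)$ whose numerical invariants form the wall $\Delta(s, e, \bn)$: $\rk E^+ = s$, $\deg E^+ = e$, with flag dimensions determined by $\bn$, and analogously for $E^-$. Correspondingly, the surjection $\varphi : \CC^\chi \twoheadrightarrow F := E(m)$ splits as $\varphi_+ \oplus \varphi_-$ with $\varphi_\pm : \CC^{\chi_\pm} \twoheadrightarrow F_\pm := E^\pm(m)$, where by Riemann--Roch $\chi_+ = e + s(m+1-g)$ and $\chi_- = (d-e) + (r-s)(m+1-g)$. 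The cocharacter $\lambda_-$ of $\SL_\chi$ acts diagonally with respect to this decomposition by weight $\chi_-$ on $\CC^{\chi_+}$ and weight $-\chi_+$ on $\CC^{\chi_-}$; the overall sign (distinguishing $\lambda_-$ from $\lambda_-^{-1}$) is pinned down by the Hilbert--Mumford requirement that $\lambda_-$ destabilize $z$ on the $A_-$-side of the wall, which can be checked directly against Bhosle's linearization $A(\ba^-)$.

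Next I would carry out the weight computation on the $\wedge^r$-embedding. At $z$, the element $\wedge^r \varphi \in V := \rHom(\wedge^r \CC^\chi, \rH^0(L(rm)))$ has a nonzero component only on the summand $\wedge^s \CC^{\chi_+} \otimes \wedge^{r-s} \CC^{\chi_-}$ of $\wedge^r \CC^\chi$, because $\wedge^r F = \det F_+ \otimes \det F_-$. Thus the line $\CC \cdot \wedge^r \varphi \subset V$ carries $\lambda_-$-weight $s \chi_- - (r-s) \chi_+$, and direct substitution of the Riemann--Roch formulas yields
\[
	s \chi_- - (r-s) \chi_+ = s(d-e) - (r-s) e = sd - re,
\]
the $(m+1-g)$-terms canceling. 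Under the paper's convention $\PP(W) = \{\text{one-dim quotients of } W\}$, the tautological quotient bundle $\cO_Z(1)$ at the point $[\CC \cdot \wedge^r \varphi] \in \PP(V^*)$ is dual to this line, so $\wt_{\lambda_-} \cO_Z(1) = -(sd - re)$.

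Finally, one uses that the pullback of $\Theta$ from $\rM(r, L)$ through $\widetilde R \hookrightarrow Z \times \Gr(\chi-1, \chi) \times \Gr(\chi - r + 1, \chi)$ is concentrated on the $Z$-factor and equals $\cO_Z(\chi)$; this is essentially the Le Potier/Narasimhan--Ramanan principle matching the $\wedge^r$-Plücker polarization on the Quot scheme with a specific power of the theta divisor on the coarse moduli space, and it can be read off from the explicit form of Bhosle's linearization $A(\ba)$. Multiplying the preceding weight by $\chi$ then gives $\wt_{\lambda_-} \Theta = -\chi(sd - re)$, as desired. The main obstacle is precisely this identification of $\Theta|_Z$ with $\cO_Z(\chi)$, together with the sign verification for $\lambda_-$; once those are in hand, the remainder is the short linear-algebra computation above.
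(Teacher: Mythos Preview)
Your setup coincides with the paper's: a point of $Z$ corresponds to a split quotient $\CC^{\chi_+}\oplus\CC^{\chi_-}\twoheadrightarrow E^+(m)\oplus E^-(m)$, and $\lambda_-$ acts diagonally with weights $\pm\chi_\mp$ determined by the $\SL_\chi$ constraint. The divergence is in how the weight of $\Theta$ is extracted. The paper does \emph{not} pass through the $\wedge^r$-embedding or attempt to identify $\Theta$ with $\cO_Z(\chi)$; instead it invokes the explicit determinantal description
\[
\Theta|_{[E]} \;=\; \mathrm{Det}(E(m))^r \otimes (\det E(m)|_p)^\chi \;=\; (\wedge^\chi \rH^0(E(m))^*)^r \otimes (\det E(m)|_p)^\chi
\]
from \cite[Proposition~2.1]{Nar17}. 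Because $\lambda_-\subset\SL_\chi$, the first factor has weight zero, and the weight of the second is read off in one line from the splitting as $\chi(-s\chi^- + (r-s)\chi^+) = -\chi(sd-re)$. This bypasses entirely the step you flag as the main obstacle, namely $\Theta|_{\widetilde R}\cong\cO_Z(\chi)$; that identification is neither proved nor needed.

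Two sign slips in your argument happen to cancel. First, the line $\CC\cdot\wedge^r\varphi$ inside $V=\rHom(\wedge^r\CC^\chi,\rH^0(L(rm)))$ has weight equal to \emph{minus} the weight of the domain summand $\wedge^s\CC^{\chi_+}\otimes\wedge^{r-s}\CC^{\chi_-}$ (the target carries the trivial action), so with your normalization the line has weight $-(s\chi_--(r-s)\chi_+)=-(sd-re)$, not $sd-re$. Second, your sign for $\lambda_-$ (weight $+\chi_-$ on $\CC^{\chi_+}$) is opposite to the paper's normalization (weight $-\chi^-$ on $E^+(m)$). With the paper's sign and the corrected $\rHom$ weight, one indeed gets $\wt_{\lambda_-}\cO_Z(1)=-(sd-re)$, so your final formula survives, but the intermediate steps as written are inconsistent. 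The moral is that the determinantal formula for $\Theta$ makes the computation a two-line affair and eliminates both the unverified Pl\"ucker identification and the bookkeeping that caused these slips.
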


\begin{proof}
A general point in $Z$ parametrizes a pair 
\[
	([\cO^{\chi^{+}}\oplus \cO^{\chi^{-}} \stackrel{\varphi}{\to} E^{+}(m) \oplus E^{-}(m) \to 0], V_{\bullet}),
\]
where $E^{+}$ (resp. $E^{-}$) is a rank $s$ (resp. $r-s$), degree $e$ (resp. $d-e$) vector bundle, $\chi^{\pm} = \dim \rH^{0}(E^{\pm}(m))$, and $\varphi = \varphi^{+} \oplus \varphi^{-}$ where $\varphi^{\pm} : \cO^{\chi^{\pm}} \to E^{\pm}(m)$ and $\rH^{0}(\cO^{\chi^{\pm}}) \to \rH^{0}(E^{\pm}(m))$ is a scalar multiple map. Because $\lambda_{-}(t)$ is a subgroup of $\SL_{\chi}$, $\lambda_{-}(t)$-weight on $E^{+}(m)$ is $-u\chi^{-}$ and that on $E^{-}(m)$ is $u\chi^{+}$ for some scalar $u$. Normalizing $\lambda_{-}$, we may assume that $u = 1$. By Riemann-Roch, it is straightforward to check that $\chi^{+} = e + sm + s(1-g)$ and $\chi^{-} = (d-e) + (r-s)m + (r-s)(1-g)$. 

For any vector bundle $E$ over $X$, $\Theta|_{[E]} = \Theta|_{[E(m)]}$ is defined as 
\[
	\mathrm{Det}(E(m))^{r} \otimes \det E(m)|_{x}^{\chi} = (\wedge^{\chi}\rH^{0}(E(m))^{*})^{r}\otimes E(m)|_{x}^{\chi}.
\]
for some $x \in X$ (\cite[Proposition 2.1]{Nar17}). Its $\lambda_{-}$-weight is 
\[
	-r(-\chi^{-}\chi^{+}+\chi^{+}\chi^{-}) + \chi(-s\chi^{-} + (r-s)\chi^{+}) = -\chi(sd - re).
\]	
\end{proof}

\begin{lemma}\label{lem:wallimplieszeroweight}
Under the same assumption, $\mathrm{wt}_{\lambda_{-}} A = 0$.
\end{lemma}

\begin{proof}
Recall that $\rM(r, L, \ba) = \widetilde{R}^{ss}(A)\git \SL_{\chi}$. Since $A$ descends to $\rM(r, L, \ba)$, by Kempf's descent lemma (\cite[Theorem 2.3]{DN89}), for any closed $\SL_{\chi}$-orbit, the stabilizer group acts on the fiber of $A$ trivially. In particular, at a point in $Z$, the stabilizer group $\lambda_{-}$ acts trivially on the fiber, hence the $\lambda_{-}$-weight is zero. 
\end{proof}

On the other hand, for a point $z := ([\cO^{\chi^{+}}\oplus \cO^{\chi^{-}} \to E^{+}(m)\oplus E^{-}(m) \to 0], V_{\bullet}) \in Z$, $N_{S_{-}/\widetilde{R}}|_{z}$ is identified with $\Ext^{1}((E^{-}, V_{\bullet}^{-}), (E^{+}, V_{\bullet}^{+}))$ and the action of $\lambda_{-}$ on $N_{S_{\pm}/\widetilde{R}^{ss}(A)}|_{z}$ has weight $-\chi$ (\cite[Section 7]{Tha96}). Thus, for a negative wall-crossing along $\Delta(s, e, (s, 1))$, we obtain
\begin{equation}\label{eqn:etam}
	\eta_{-} = \chi \dim \Ext^{1}((E^{-}, V_{\bullet}^{-}), (E^{+}, V_{\bullet}^{+})) = \chi(sd - re +s(r-s)(g-1)), 
\end{equation}
\begin{equation}\label{eqn:etap}
	\eta_{+} = \chi \dim \Ext^{1}((E^{+}, V_{\bullet}^{+}), (E^{-}, V_{\bullet}^{-})) = \chi(re - sd +s(r-s)(g-1) + r).
\end{equation}
by Proposition \ref{prop:fiberdim}.


\section{Embedding of derived category}\label{sec:embedding}

In this section, we prove Theorem \ref{thm:mainthmembedding}.

\subsection{Bondal-Orlov criterion}

Let $\cE$ be the normalized Poincar\'e bundle over $X \times \rM(r, L)$. Let $p : X \times \rM(r, L) \to X$, $q : X \times \rM(r, L) \to \rM(r, L)$ be two projections. Consider the Fourier-Mukai transform 
\begin{eqnarray*}
	\Phi_{\cE} : \rD^{b}(X) &\to& \rD^{b}(\rM(r, L))\\
	F^{\bullet} & \mapsto & Rq_{*}(\cE \otimes^L Lp^{*} F^{\bullet}).
\end{eqnarray*}

The Bondal-Orlov criterion (\cite[Theorem 1.1]{BO95}) provides the necessary and sufficient condition for the fully-faithfulness of a Fourier-Mukai transform between two smooth algebraic varieties. The next theorem is a version applied to $\Phi_{\cE}$.

\begin{theorem}[Bondal-Orlov criterion]\label{thm:vanishing}
For each $x \in X$, let $\cE_{x}$ be the restriction of the normalized Poincar\'e bundle on $\rM(r, L)$. The functor $\Phi_{\cE} : \rD^{b}(X) \to \rD^{b}(\rM(r, L))$ is fully faithful if and only if the following conditions hold:
\begin{enumerate}
\item $\rH^{0}(\rM(r, L), \cE_{x} \otimes \cE_{x}^{*}) \cong \CC$. 
\item $\rH^{i}(\rM(r, L), \cE_{x} \otimes \cE_{x}^{*}) = 0$ for $i \ge 2$. 
\item $\rH^{i}(\rM(r, L), \cE_{x_1} \otimes \cE_{x_2}^{*}) = 0$ for all $x_1 \ne x_2$ and all $i \in \ZZ$. 
\end{enumerate}
\end{theorem}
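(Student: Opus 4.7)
The plan is to derive the statement as a direct translation of the Bondal--Orlov fully faithfulness criterion (\cite[Theorem 1.1]{BO95}): a Fourier--Mukai functor $\Phi : \rD^{b}(X) \to \rD^{b}(Y)$ between smooth projective varieties is fully faithful if and only if, for every pair of closed points $x_{1}, x_{2} \in X$, one has $\rHom(\Phi(\cO_{x}), \Phi(\cO_{x})) \cong \CC$ and $\rHom(\Phi(\cO_{x_{1}}), \Phi(\cO_{x_{2}})[i]) = 0$ whenever $x_{1}\ne x_{2}$, or when $x_{1}=x_{2}$ and $i\notin[0,\dim X]$. Since $\dim X = 1$ and $\Phi_{\cE}(\cO_{x}) = \cE_{x}$, the entire task reduces to rewriting these Hom spaces as sheaf cohomology of the bundles appearing in (1)--(3).

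The key identification I would use is that, because $\cE_{x}$ is a locally free sheaf on the smooth projective variety $\rM(r, L)$,
\[
\Ext^{i}(\cE_{x_{1}}, \cE_{x_{2}}) \;=\; \rH^{i}(\rM(r, L), \cE_{x_{1}}^{*}\otimes \cE_{x_{2}}),
\]
which in particular vanishes automatically for $i < 0$. For the endomorphism case $x_{1}=x_{2}=x$, tensor commutativity gives $\cE_{x}^{*}\otimes \cE_{x}\cong \cE_{x}\otimes \cE_{x}^{*}$, so the Bondal--Orlov conditions become exactly condition (1) in degree $0$ and condition (2) in degrees $i\ge 2$. For the off-diagonal case, condition (3) is stated for every \emph{ordered} pair $x_{1}\ne x_{2}$ and every $i\in\ZZ$, so after renaming the two points we obtain the required vanishing of $\rH^{i}(\rM(r,L), \cE_{x_{1}}^{*}\otimes \cE_{x_{2}})$ for all $i$. (Alternatively, Serre duality on $\rM(r,L)$ converts $\rH^{i}(\rM(r,L),\cE_{x_{1}}^{*}\otimes \cE_{x_{2}})$ into $\rH^{D-i}(\rM(r,L),\cE_{x_{1}}\otimes \cE_{x_{2}}^{*}\otimes \omega_{\rM(r,L)})^{*}$, where $D=\dim \rM(r,L)$, and twisting by the line bundle $\omega_{\rM(r,L)}$ does not affect the all-degrees vanishing.)

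The reverse implication is the same translation run backwards: if $\Phi_{\cE}$ is fully faithful, then $\Ext^{i}(\cE_{x_{1}}, \cE_{x_{2}})\cong \rHom_{\rD^{b}(X)}(\cO_{x_{1}}, \cO_{x_{2}}[i])$, and the right-hand side is $\CC$ in degrees $0,1$ when $x_{1}=x_{2}$ and zero otherwise; rewriting via the same identification with sheaf cohomology yields conditions (1)--(3).

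Because no cohomology computation is carried out at this stage, there is no substantive obstacle in the proof of Theorem \ref{thm:vanishing} itself; it is a bookkeeping reduction. The genuine difficulty---and the reason for the remainder of the paper---is the actual \emph{verification} of conditions (1), (2), and (3) on the Fano variety $\rM(r,L)$, which is where the variation-of-GIT machinery reviewed in Sections \ref{sec:parabolicbundle}--\ref{sec:cohomologyderivedcategory} and the quantization theorem (Theorem \ref{thm:quantization}) will do the heavy lifting.
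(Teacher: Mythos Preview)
Your argument for the biconditional is correct: the statement as literally written is exactly the Bondal--Orlov criterion specialized to $\Phi_{\cE}$, and your translation of $\Ext^{i}(\cE_{x_{1}},\cE_{x_{2}})$ into sheaf cohomology, together with the renaming-of-points observation for condition (3), handles it cleanly.

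However, this is not what the paper actually proves under the heading ``Proof of Theorem \ref{thm:vanishing}.'' The paper treats the equivalence itself as already dispatched by the sentence immediately preceding the theorem (``By Bondal--Orlov criterion \dots''), and the body of the proof is devoted entirely to \emph{verifying} that conditions (1)--(3) hold. Items (1) and (2) are cited from \cite{BM19} (extending \cite{NR75}); the substantive content is the proof of (3), which proceeds by identifying $\rH^{i}(\rM(r,L),\cE_{x_{1}}\otimes\cE_{x_{2}}^{*})$ with $\rH^{i}(\rM(r,L,\be),\cO(1,1))$, running the diagonal wall-crossing of Section \ref{sec:diagonal}, using the weight computations of Section \ref{ssec:weight} to check that $\wt_{\lambda_{-}}\cO(1,1)=\chi$ lies in the window $(-\eta_{+},\eta_{-})$ at every wall, invoking Theorem \ref{thm:cohomologyidentification} to transport the cohomology to a model $\rM(r,L,\ba)$ on which $\cO(r+1,r+1)\otimes\Theta^{2}$ is nef and big, and finishing with Kawamata--Viehweg vanishing and an effective-cone argument for $\rH^{0}$.

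You anticipate this in your closing paragraph, but you should understand that the paper places that entire computation \emph{inside} the proof of Theorem \ref{thm:vanishing}, not elsewhere. In effect the paper reads the theorem as ``conditions (1)--(3) hold, hence (by Bondal--Orlov) $\Phi_{\cE}$ is fully faithful,'' while you have proved only the logical equivalence. Both readings are defensible given the phrasing, but if your goal is to match the paper's proof, the real work---the cohomology vanishing via wall-crossing---is still ahead of you.
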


\begin{proof}[Proof of Theorem \ref{thm:vanishing}]
Items (1) and (2) are proved by \cite[Section 3]{BM19} by extending the work of Narasimhan and Ramanan in \cite{NR75}. We show Item (3). Since 
\[
	\rH^{i}(\rM(r, L, \be), \cO(1, 1)) \cong \rH^{i}(\rM(r, L), \cE_{x_{1}} \otimes \cE_{x_{2}}^{*})
\]
for a small $\be = (\epsilon, \epsilon)$, it is sufficient to show that $\rH^{i}(\rM(r, L, \be), \cO(1, 1)) = 0$. 

By Proposition \ref{prop:effectivecone} and the fact that there is no divisorial contraction on the wall-crossing (Proposition \ref{prop:fiberdim}), there is a parabolic weight $\ba$ such that $\cO(r+1, r+1) \otimes \Theta^{2}$ is nef and big on $\rM(r, L, \ba)$. Note that $\cO(r+1, r+1) \otimes \Theta^{2}$ lies on a subspace generated by two extremal rays $\Theta$ and $\cO(r, r) \otimes \Theta$ of $\Eff(\rM(r, L, \be))$. To reach this line bundle, we may run a diagonal wall-crossing. By Proposition \ref{prop:diagonalissimple}, we encounter only negative walls, which are all simple, to reach $\rM(r, L, \ba)$ from $\rM(r, L, \be)$. 

For each negative wall $\Delta(s, e, (s, 1))$, a parabolic weight $\ba' = (a_{1}', a_{2}')$ lies on it if and only if it satisfies $sa_{1}'+(r-s)a_{2}' = sd - re$. Furthermore, if $\ba'$ is on the diagonal, $a_{1}' = a_{2}' = (sd-re)/r$. Thus, by \eqref{eqn:weighttobundle} in Section \ref{ssec:effectivecone}, the associated line bundle is a scalar multiple of 
\[
	\cO(sd-re, sd-re) \otimes \Theta.
\]
The $\lambda_{-}$-weight for this line bundle has to be zero by Lemma \ref{lem:wallimplieszeroweight}. By Proposition \ref{prop:thetalambdaweight}, 
\[
	\mathrm{wt}_{\lambda_{-}}\cO(1, 1) = \chi.
\]

On the other hand, since $g \ge 2$ and $0 < sd - re < r$, Equations \eqref{eqn:etam} and \eqref{eqn:etap} tell us $\eta_{\pm} > \chi$. Therefore, for any simple wall intersecting the diagonal, the $\lambda_{-}$-weight of $\cO(1,1)$ lies on $(-\eta_{-}, \eta_{+})$. Theorem \ref{thm:cohomologyidentification} implies that 
\[
	\rH^{i}(\rM(r, L, \ba'), \cO(1, 1)) \cong \rH^{i}(\rM(r, L, \be), \cO(1, 1))
\]
for any $i \in \ZZ$ and any general diagonal weight $\ba'$, including $\ba$. For $i > 0$, 
\[
	\rH^{i}(\rM(r, L, \ba), \cO(1, 1)) = \rH^{i}(\rM(r, L, \ba), \omega \otimes \cO(r+1, r+1) \otimes \Theta^{2}) = 0
\]
by Kawamata-Viehweg vanishing. And $\rH^{0}(\rM(r, L, \ba), \cO(1, 1)) = 0$ since $\cO(1, 1) \notin \Eff(\rM(r, L, \ba))$. 
\end{proof}


\section{Vanishing of cohomology}\label{sec:cohomologyvanishing}

We prove the following vanishing result, which is used in both the computation of a semiorthogonal decomposition of $\rD^{b}(\rM(r, L))$ and the construction of ACM bundles. 

\begin{theorem}\label{thm:vanishingoftwistedbundle}
For any $x \in X$ and $j \ge -1$, $\rH^{i}(\rM(r, L), \cE_{x} \otimes \Theta^{j}) = 0$ for all $i > 0$. 
\end{theorem}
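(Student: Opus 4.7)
The plan is to identify the cohomology on $\rM(r, L)$ with cohomology on a parabolic moduli space where Kawamata--Viehweg vanishing applies. Using the projection $p \colon \rM(r, L, \be) \to \rM(r, L)$, which is a fiber product of two projective bundles, we have $p_*\cO(1,0) = \cE_{x_1}$ and $R^{>0}p_*\cO(1,0) = 0$. Taking $x = x_1$ and applying Leray gives
\[
\rH^i(\rM(r, L), \cE_x \otimes \Theta^j) \cong \rH^i(\rM(r, L, \be), \cO(1, 0) \otimes \Theta^j),
\]
so it suffices to establish the vanishing on the right for $i > 0$ and $j \geq -1$.

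Writing $\cO(1, 0) \otimes \Theta^j = \omega \otimes (\cO(r+1, r) \otimes \Theta^{j+2})$ via \eqref{eqn:canonical}, I would transport the computation to a birational model $\rM(r, L, \ba)$ on which $\cO(r+1, r) \otimes \Theta^{j+2}$ is nef and big, and then apply Kawamata--Viehweg. By \eqref{eqn:weighttobundle}, this line bundle corresponds to the parabolic weight $((r+1)/u, r/u)$ with $u = r(j+2) + \ell$, which lies in the open interior of $(0,1)^2$ precisely when $u > r+1$, i.e., when $(j, \ell) \neq (-1, 1)$. In those cases I take $\ba$ in the chamber containing this weight; then $\cO(r+1, r) \otimes \Theta^{j+2}$ is ample on $\rM(r, L, \ba)$, and Kawamata--Viehweg yields $\rH^i(\rM(r, L, \ba), \cO(1, 0) \otimes \Theta^j) = 0$ for $i > 0$.

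To propagate the vanishing back to $\rM(r, L, \be)$, I would traverse the nearly-diagonal path $t \mapsto t((r+1)/u, r/u)$ in weight space. By Proposition \ref{prop:diagonalissimple} every wall crossed is a simple negative wall $\Delta(s, e, (s, 1))$. Using Proposition \ref{prop:thetalambdaweight}, Lemma \ref{lem:wallimplieszeroweight}, and linearity of $\mathrm{wt}_{\lambda_-}$, one computes
\[
\mathrm{wt}_{\lambda_-}(\cO(1, 0) \otimes \Theta^j) \;=\; \tfrac{\chi}{r}\bigl(s - (sd - re)(\ell + rj)\bigr).
\]
Comparing with $\eta_\pm$ from \eqref{eqn:etam} and \eqref{eqn:etap}, and using $g \geq 2$, $1 \leq s \leq r-1$, $1 \leq sd - re \leq r-1$, $s(r-s) \geq r-1$, together with the path constraint $(sd - re)\,u \leq s + r^2$ (which restricts which walls actually meet the chosen path), the inequalities $\mathrm{wt}_{\lambda_-} \in (-\eta_-, \eta_+)$ reduce to elementary bounds. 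Theorem \ref{thm:cohomologyidentification} then identifies cohomology across each wall, completing the argument in the generic cases.

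The main obstacle is the residual case $(j, \ell) = (-1, 1)$, equivalently $d = 1$: the target weight $(1, r/(r+1))$ sits on $\partial[0,1]^2$, so $\cO(r+1, r) \otimes \Theta$ lies on the boundary of $\Eff$—nef but not big on any model—making direct Kawamata--Viehweg unavailable. This line bundle does, however, lie in the relative interior of the face of $\Eff$ spanned by $\cO(r, 0)$ and $\cO(r, r) \otimes \Theta$, and is therefore semi-ample; it defines a Hecke-type contraction $\phi \colon \rM(r, L, \ba) \to Y$ at $x_1$. Since $Y$ arises as a GIT quotient and hence has rational singularities, combining a Koll\'ar-type vanishing for $R\phi_* \omega$ with Kawamata--Viehweg on $Y$ should yield the desired vanishing. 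Alternatively, one can induct on $j$ via the short exact sequence $0 \to \cE_x \otimes \Theta^{j} \to \cE_x \otimes \Theta^{j+1} \to \cE_x \otimes \Theta^{j+1}|_Z \to 0$ associated to a defining section of the theta divisor $Z \subset \rM(r, L)$, reducing the boundary case to a cohomology vanishing on $Z$ that can be attacked by an analogous wall-crossing argument on the parabolic moduli over $Z$.
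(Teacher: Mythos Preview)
Your outline is essentially the paper's proof: the same reduction to $\rM(r,L,\be)$ via Leray, the same canonical-bundle trick, the same nearly-diagonal wall-crossing with the same weight formula and path constraint, and the same Kawamata--Viehweg endpoint; for the residual case $(j,\ell)=(-1,1)$ the paper takes exactly your first suggestion (Hecke contraction plus Koll\'ar vanishing), citing \cite{LM21} for the details rather than your alternative induction on $j$. The only place where more needs to be said is the ``elementary bounds'': the paper verifies $\mathrm{wt}_{\lambda_-}\in(-\eta_-,\eta_+)$ by a short case split ($s=1$, $s=r-1$, and $2\le s\le r-2$), which is not entirely automatic but is straightforward once set up.
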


\begin{proof} We divide the proof into several steps.

\textbf{Step 1.}
Observe that 
\[
	\rH^{i}(\rM(r, L), \cE_{x} \otimes \Theta^{j}) \cong \rH^{i}(\rM(r, L, \be), \cO(1, 0)  \otimes \Theta^{j})
	\cong \rH^{i}(\rM(r, L, \be), \cO(r+1, r) \otimes \Theta^{j+2} \otimes \omega).
\]
Since 
\[
	\cO(r+1, r) \otimes \Theta^{j+2} = (\cO(r, r) \otimes \Theta) \otimes \Theta^{\frac{\ell-1}{r} + (j+1)} \otimes (\cO(r, 0) \otimes \Theta^{1-\ell})^{\frac{1}{r}}, 
\]
for $j \ge -1$, $\cO(r+1, r) \otimes \Theta^{j+2}$ is on the effective cone of $\rM(r, L, \be)$ where $\be = ((r+1)\epsilon, r\epsilon)$ for a small $0 < \epsilon \ll 1$. Moreover, unless $\ell = 1$ and $j = -1$, it lies on the interior of the effective cone. (We will treat $\ell = 1$, $j = -1$ case in \textbf{Step 4.}) Thus, if we take $\ba$ as (possibly a slight perturbation of) the one associated to $\cO(r+1, r) \otimes \Theta^{j+2}$, that is, $(\frac{r+1}{r(j+2)+\ell}, \frac{r}{r(j+2)+\ell})$ by \eqref{eqn:weighttobundle}, $\cO(r+1, r) \otimes \Theta^{j+2}$ is nef and big on $\rM(r, L, \ba)$. By Kawamata-Viehweg vanishing, $\rH^{i}(\rM(r, L, \ba), \cO(r+1, r) \otimes \Theta^{j+2} \otimes \omega) = 0$ for $i > 0$. Thus, it is enough to show that $\rH^{i}(\rM(r, L, \ba), \cO(1, 0) \otimes \Theta^{j}) \cong \rH^{i}(\rM(r, L, \be), \cO(1, 0) \otimes \Theta^{j})$.

\textbf{Step 2.} We can move from $\be$ to $\ba$ by a nearly diagonal wall-crossing (Section \ref{ssec:diagonal}). All walls that we encounter are simple wall $\Delta(s, e, (s, 1))$ (Proposition \ref{prop:diagonalissimple}). The wall $\Delta(s, e, (s, 1))$ is given by $sa_{1} + (r-s)a_{2} = sd - re$. So if the wall actually occurs while we move from $\be$ to $\ba$, 
\begin{equation}\label{eqn:boundforj}
	sd - re < s\frac{r+1}{r(j+2)+\ell} + (r-s)\frac{r}{r(j+2)+\ell} = \frac{r^{2}+s}{r(j+2)+\ell}. 
\end{equation}
The first wall occurs when $sd - re = 1$. In this case, $s = \ell$. Thus, if $\frac{r^{2}+\ell}{r(j+2)+\ell} < 1$, or equivalently, if $j > r-2$, we do not cross any wall. Then $\rM(r, L, \ba) \cong \rM(r, L, \be)$ and we are done. Thus, it is sufficient to show $\rH^{i}(\rM(r, L, \ba), \cO(1, 0) \otimes \Theta^{j}) \cong \rH^{i}(\rM(r, L, \be), \cO(1, 0) \otimes \Theta^{j})$ for $-1 \le j \le r-2$. 

\textbf{Step 3.} For each wall $\Delta(s, e, (s, 1))$, let $\lambda_{-}$ be the associated one-parameter subgroup. Combining Proposition \ref{prop:thetalambdaweight} and Lemma \ref{lem:wallimplieszeroweight}, we have 
\[
	\wt_{\lambda_{-}}\left(\cO(1, 0) \otimes \Theta^{j}\right) = \chi\left(\frac{s}{r} - \left(\frac{\ell}{r} + j\right)(sd - re)\right).
\]
Since $sd - re < r$, for any wall, it is straightforward to check that $\wt_{\lambda_{-}}\left(\cO(1, 0) \otimes \Theta^{j}\right) \le \wt_{\lambda_{-}}\left(\cO(1, 0) \otimes \Theta^{-1}\right)  < \chi(s/r + (sd - re)) < \eta_{+}$ for any $j \ge -1$ by comparing with \eqref{eqn:etap}.

Now we need to show that 
\begin{equation}\label{eqn:etamineq}
	-\eta_{-} < \wt_{\lambda_{-}}\left(\cO(1, 0) \otimes \Theta^{j}\right)
\end{equation}
for every wall $\Delta(s, e, (s, 1))$ with 
\[
	sd - re < \frac{r^{2} + s}{r(j+2)+\ell}.
\]
Equation \eqref{eqn:etamineq} is equivalent to 
\[
	(\frac{\ell}{r} + j - 1)(sd - re) < s(r-s) + \frac{s}{r}
\]
and this is trivial for $j \le 0$. Let $1 \le j \le r-2$. Then 
\[
	(\frac{\ell}{r} + j - 1)(sd - re) < j(sd - re) < \frac{j(r^{2}+s)}{r(j+2) +\ell} < \frac{(j+2)(r^{2}+s)}{r(j+2)} = r + \frac{s}{r} \le s(r-s) + \frac{s}{r}
\]
provided $2 \le s \le r-2$. If $s = 1$, 
\[
	\frac{j(r^{2}+1)}{r(j+2)+\ell} < \frac{(r-2)(r^{2}+1)}{r(r-2+2)} < r -1 + \frac{1}{r}.
\]
Finally, if $s = r-1$, 
\[
	\frac{j(r^{2}+r-1)}{r(j+2)+\ell} < \frac{(r-2)(r^{2}+r-1)}{r(r-2+2)} < r - 1 + \frac{r-1}{r}.
\]
In any case, we have the inequality \eqref{eqn:etamineq}. Therefore, by Theorem \ref{thm:cohomologyidentification}, 
\[
	\rH^{i}(\rM(r, L, \ba), \cO(1, 0) \otimes \Theta^{j}) 
	\cong \rH^{i}(\rM(r, L, \be), \cO(1, 0) \otimes \Theta^{j}). 
\]

\textbf{Step 4.} The only remaining case is that $\ell = 1$ (hence $d = 1$) and $j = -1$. We need to prove $\rH^{i}(\rM(r, L), \cE_{x} \otimes \Theta^{-1}) = 0$ for $i > 0$. Since $d = 1$, there is a contraction map $\pi_{1} : \PP(\cE_{x}) = \rM(r, L, r-1, \epsilon) \to \rM(r, L(-x))$ (Remark \ref{rmk:nef}). Then by \cite[Lemma 13]{BM19}, 
\[
\begin{split}
	\rH^{i}(\rM(r, L), \cE_{x} \otimes \Theta^{-1}) &\cong \rH^{i}(\PP(\cE_{x}), \cO(1) \otimes \Theta^{-1}) = \rH^{i}(\PP(\cE_{x}), \omega_{\PP(\cE_{x})} \otimes \cO(r+1))\\
	&= \rH^{i}(\PP(\cE_{x}), \omega_{\PP(\cE_{x})} \otimes \pi_{1}^{*}\Theta_{\rM(r, L(-x))}^{r+1}).
\end{split} 
\]

By Koll\'ar's vanishing (\cite[Theorem 2.1]{Kol86}), $R^{i}\pi_{1 *}\omega_{\PP(\cE_{x})}$ is torsion free for all $i$ and 
\[
	\rH^{k}(\rM(r, L(-x)), R^{i}\pi_{1 *}\omega_{\PP(\cE_{x})} \otimes \Theta_{\rM(r, L(-x))}^{r+1}) = 0
\]
for all $k > 0$. Since the Leray spectral sequence degenerates, $\rH^{0}(\rM(r, L(-x)), R^{i}\pi_{1 *}\omega_{\PP(\cE_{x})} \otimes \Theta_{\rM(r, L(-x))}^{r+1}) \cong \rH^{i}(\PP(\cE_{x}), \omega_{\PP(\cE_{x})} \otimes \pi_{1}^{*}\Theta_{\rM(r, L(-x))}^{r+1})$. On the other hand, over the stable locus $\rM(r, L(-x))^{s}$, $\pi_{1}$ is a $\PP^{r-1}$-fibration. Checking a general fiber, we can show that $R^{i}\pi_{1 *}\omega_{\PP(\cE_{x})} = 0$ for $i \ne r - 1$. Thus, we obtain the desired vanishing for $1 \le i \le r-2$. 

For $i = r-1$, since $R^{r-1} \pi_{1 *}\omega_{\PP(\cE_{x})}$ is a torsion free sheaf, we have an injective morphism $R^{r-1} \pi_{1 *}\omega_{\PP(\cE_{x})} \hookrightarrow (R^{r-1} \pi_{1 *}\omega_{\PP(\cE_{x})})^{\vee\vee}$. These two are isomorphic to $\omega_{\rM(r, L(-x))}$ over an open subset of codimension $\ge 2$ (\cite[Exercise III.8.4]{Har77}) and the latter is reflexive. Since $\rM(r, L(-x))$ is locally factorial (\cite[Theorem A]{DN89}), $(R^{r-1} \pi_{1 *}\omega_{\PP(\cE_{x})})^{\vee\vee} \cong \omega_{\rM(r, L(-x))} \cong \Theta_{\rM(r, L(-x))}^{-2r}$ (\cite[Theorem F]{DN89}). We have 
\[
\begin{split}
	\rH^{0}(\rM(r, L(-x)), R^{r-1}\pi_{1 *}\omega_{\PP(\cE_{x})} \otimes \Theta_{\rM(r, L(-x))}^{r+1}) &\hookrightarrow \rH^{0}(\rM(r, L(-x)), \omega_{\rM(r, L(-x))} \otimes \Theta_{\rM(r, L(-x))}^{r+1})\\
	&= \rH^{0}(\rM(r, L(-x)), \Theta_{\rM(r, L(-x))}^{-r+1}) = 0.
\end{split}
\]
\end{proof}

\begin{remark}\label{rmk:g=2}
When $g = r = 2$, $\rM(r, L)$ is an intersection of two quadrics in $\PP^5$ and $\cE_{x}$ is a spinor bundle \cite{CKL19, FK18}. From this description, it was shown that $\cE_{x}$ is ACM for all $x \in X$.
\end{remark}


\section{Semiorthogonal decomposition}\label{sec:SOD}

Since $\rM(r, L)$ is an index two Fano variety of Picard number one (\cite{Ram73}), $\cO, \Theta$ form an exceptional collection. In this section, we prove Theorem \ref{thm:SOD} by showing that the exceptional collection and the image of $\rD^{b}(X)$ form a part of a semiorthogonal decomposition of $\rD^{b}(\rM(r, L))$. It was proved for $r = 2$ in \cite{Nar17, Nar18}, and for $d = 1$ and $g \ge 3r + 4$ in \cite{BM19}. Since a stronger version of Theorem \ref{thm:SOD} is proved for $r = 2$ (\cite[Theorem 1.1]{TT21}), we assume that $r \ge 3$. 

\begin{proof}[\protect{Proof of Theorem \ref{thm:SOD}}]
By Theorem \ref{thm:mainthmembedding}, we have four full subcategories
\[
	\cO, \Phi_{\cE}(\rD^{b}(X)), \Theta, \Phi_{\cE}(\rD^{b}(X))\otimes \Theta.
\]
We will show that they are semiorthogonal in that order. We need to prove the orthogonality condition. Since $\{ \CC(x) ~ | ~ x \in X \}$ form a spanning class of $\rD^{b}(X)$, $\{ \cE_{x} ~ | ~ x \in X \}$ (resp. $\{\cE_{x}\otimes \Theta ~ | ~ x \in X \}$) form a spanning class of $\Phi_{\cE}(\rD^{b}(X))$ (resp. $\Phi_{\cE}(\rD^{b}(X))\otimes \Theta$). Therefore, it is sufficient to prove the cohomology vanishing in Theorem \ref{thm:orthogonality} below.
\end{proof}

\begin{theorem}\label{thm:orthogonality}
Assume the $g(X) \ge 6$. For any $i \in \ZZ$ and not necessarily distinct two points $x_{1}, x_{2} \in X$, the following cohomologies are trivial.
\begin{enumerate}
\item $\rH^i(\rM(r,L), \cE^*_{x_{1}})$;
\item $\rH^i(\rM(r,L), \cE_{x_{1}} \otimes \Theta^{-1})$;
\item $\rH^i(\rM(r,L), \cE_{x_{1}} \otimes \cE^*_{x_{2}} \otimes \Theta^{-1})$. 
\end{enumerate}
\end{theorem}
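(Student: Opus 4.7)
The plan is to convert each of the three cohomology computations into that of an explicit line bundle on the small-weight parabolic moduli $\rM(r,L,\be)$, transport it through simple wall crossings via Theorem~\ref{thm:cohomologyidentification}, and then apply Kawamata--Viehweg vanishing (or observe non-effectivity) on a carefully chosen birational model $\rM(r,L,\ba)$. After relabeling the two marked points appropriately, the projection formula for $p : \rM(r,L,\be)\to\rM(r,L)$ yields
\[
	\rH^{i}(\cE^{*}_{x_{1}})\cong\rH^{i}(\cO(0,1)),\quad
	\rH^{i}(\cE_{x_{1}}\otimes\Theta^{-1})\cong\rH^{i}(\cO(1,0)\otimes\Theta^{-1}),\quad
	\rH^{i}(\cE_{x_{1}}\otimes\cE^{*}_{x_{2}}\otimes\Theta^{-1})\cong\rH^{i}(\cO(1,1)\otimes\Theta^{-1})
\]
on $\rM(r,L,\be)$; the third isomorphism is for $x_{1}\ne x_{2}$, while for $x_{1}=x_{2}$ one splits $\cE_{x}\otimes\cE^{*}_{x}\cong\cO\oplus\mathrm{ad}\,\cE_{x}$ and uses $\rH^{i}(\Theta^{-1})=0$ (Kodaira plus Serre duality on the Fano variety $\rM(r,L)$ of index $2$). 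In every case the vanishing at $i=0$ is immediate: each of $\cO(0,1)$, $\cO(1,0)\otimes\Theta^{-1}$, and $\cO(1,1)\otimes\Theta^{-1}$ produces a negative denominator in \eqref{eqn:weighttobundle} and so lies outside the effective cone (cf.\ Proposition~\ref{prop:effectivecone}).

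Case (2) is exactly the $j=-1$ specialization of Theorem~\ref{thm:vanishingoftwistedbundle}. For case (1) I would copy that proof almost verbatim, replacing $\cO(1,0)\otimes\Theta^{j}$ by $\cO(0,1)$ and the nearly-diagonal direction $ra_{1}=(r+1)a_{2}$ by its mirror $ra_{2}=(r+1)a_{1}$. Writing $\cO(0,1)=\omega\otimes\bigl(\cO(r,r+1)\otimes\Theta^{2}\bigr)$, by \eqref{eqn:weighttobundle} the divisor $\cO(r,r+1)\otimes\Theta^{2}$ is associated to the interior weight $\bigl(\tfrac{r}{2r-\ell},\tfrac{r+1}{2r-\ell}\bigr)$ (or a slight perturbation, should $\ell=r-1$); on the corresponding $\rM(r,L,\ba)$ it is nef and big, so Kawamata--Viehweg gives the vanishing for $i>0$. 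A mirror of Proposition~\ref{prop:diagonalissimple} shows that the path from $\be$ to $\ba$ crosses only simple negative walls, and Proposition~\ref{prop:thetalambdaweight} together with Lemma~\ref{lem:wallimplieszeroweight} gives $\wt_{\lambda_{-}}\cO(0,1)=\chi\bigl(r-s+\ell(sd-re)\bigr)/r$; a routine estimate keeps this number in the admissible range $(-\eta_{+},\eta_{-})$ at each such wall under $g\ge 6$, so Theorem~\ref{thm:cohomologyidentification} finishes the case.

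Case (3) is the main obstacle. Unlike (1) and (2), the natural factorization $\cO(1,1)\otimes\Theta^{-1}=\omega\otimes\bigl(\cO(r+1,r+1)\otimes\Theta\bigr)$ fails, because \eqref{eqn:weighttobundle} assigns the weight $\bigl(\tfrac{r+1}{r},\tfrac{r+1}{r}\bigr)\notin[0,1]^{2}$ to $\cO(r+1,r+1)\otimes\Theta$---this bundle is not in the effective cone on any $\rM(r,L,\ba)$, so Kawamata--Viehweg cannot be invoked directly. The wall-crossing step does work: $\wt_{\lambda_{-}}\bigl(\cO(1,1)\otimes\Theta^{-1}\bigr)$ equals $\chi(1+sd-re)$ at a negative wall and $\chi(sd-re)$ at a positive wall, each of which lies in $(-\eta_{+},\eta_{-})$ when $g\ge 6$, so cohomology is preserved across every simple wall. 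The missing input is a terminal model $\rM(r,L,\ba^{\star})$ where one can prove directly that $\rH^{i}(\cO(1,1)\otimes\Theta^{-1})=0$ for \emph{all} $i$; my proposed strategy is to take a diagonal wall-crossing (Proposition~\ref{prop:diagonalissimple}) from $\be$ toward the corner $(1,1)$ of $[0,1]^{2}$, ending near the boundary facet of $\Eff(\rM(r,L,\ba))$ on which the rational contraction is the generalized Hecke correspondence of Section~\ref{ssec:wallcrossing}, and then compute the relevant cohomology by push--pull through the Hecke correspondence onto a moduli space of bundles of coprime smaller degree on $X$, reducing to an application of Theorem~\ref{thm:vanishingoftwistedbundle} (or its Serre-dual variant).

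The main obstacle is thus case (3): locating a birational endpoint on which $\rH^{i}(\cO(1,1)\otimes\Theta^{-1})=0$ for every $i$, and simultaneously verifying admissibility of the $\lambda_{-}$-weights at each simple wall (positive and negative) crossed along the way. The genus bound $g\ge 6$ is expected to be tight precisely for these weight estimates in case (3).
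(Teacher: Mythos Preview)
Your treatment of Item~(2) matches the paper, and for Item~(1) your mirror-wall-crossing argument could be made to work, but the paper short-circuits it entirely: since $\cE_{x_{1}}^{*}\otimes\Theta$ is a normalized Poincar\'e bundle on $\rM(r,L^{*}(r))\cong\rM(r,L)$, Item~(1) is literally Item~(2) for that moduli space. No second wall-crossing computation is needed.

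The genuine gap is Item~(3). You have correctly diagnosed why the parabolic/Kawamata--Viehweg machine cannot close the argument: $\cO(r+1,r+1)\otimes\Theta$ sits outside $\Eff(\rM(r,L,\ba))$ for every $\ba$, so there is no terminal birational model on which vanishing is available. Your proposed fix---push along the generalized Hecke correspondence near the $(1,1)$ corner and reduce to Theorem~\ref{thm:vanishingoftwistedbundle} on a smaller moduli space---is only a sketch, and in fact the paper explicitly remarks that this route fails for two reasons: the $x_{1}=x_{2}$ case has no parabolic interpretation at all (your splitting $\cE_{x}\otimes\cE_{x}^{*}\cong\cO\oplus\mathrm{ad}\,\cE_{x}$ still leaves $\rH^{i}(\mathrm{ad}\,\cE_{x}\otimes\Theta^{-1})$ unaccounted for), and the non-effectivity obstruction persists across the entire weight square.

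The paper abandons wall-crossing for Item~(3) and instead works directly on $\rM(r,L)$ using Sommese's vanishing theorem for $k$-ample vector bundles. The key geometric input is that $\cE_{x_{1}}\otimes\cE_{x_{2}}^{*}\otimes\Theta$ is $(g-1)\ell(r-\ell)$-ample, proved by analyzing the first wall of the single-point parabolic moduli $\rM(r,L,r-1,\epsilon)\cong\PP(\cE_{x})$ (Lemmas~\ref{lem:kamplenessElargeell}, \ref{lem:kamplenessEellone}, Proposition~\ref{prop:kamplenessEtensorE}). Sommese then gives $\rH^{i}(\cE_{x_{1}}\otimes\cE_{x_{2}}^{*}\otimes\Theta^{-1})=0$ for $i\ge r^{2}+(g-1)\ell(r-\ell)$, and Serre duality (which swaps $x_{1}\leftrightarrow x_{2}$) covers $i\le (r^{2}-1-\ell(r-\ell))(g-1)-r^{2}$. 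The two ranges overlap exactly when
\[
\frac{2r^{2}-1}{r^{2}-1-2\ell(r-\ell)}\le g-1,
\]
and an elementary estimate shows this holds for all $r\ge 3$ once $g\ge 6$. This is where the genus bound actually enters; it is not, as you conjectured, a constraint coming from $\lambda_{-}$-weight windows.
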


We use Sommese's vanishing theorem for $k$-ample vector bundles. On a smooth variety $V$, a line bundle $A$ on $V$ is {\em $k$-ample }if it is semiample and the dimension of the fiber of the morphism $|mA| : V \to \PP^N$ is less than or equal to $k$ for $m \gg 0$. A vector bundle $F$ is $k$-ample if $\cO_{\PP(F)}(1)$ is $k$-ample.

\begin{theorem}[\protect{\cite[Proposition 1.13]{Som78}}, Sommese vanishing theorem]\label{thm:Sommese}
Let $F$ be a rank $r$ $k$-ample vector bundle on $V$. Then we have $\rH^i(V,\omega_V \otimes F)=0$ for $i \ge r+k$.
\end{theorem}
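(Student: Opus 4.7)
The plan is to reduce the vector bundle statement to the corresponding statement for a $k$-ample line bundle via the projective bundle $\pi : \PP(F) \to V$ of relative dimension $r - 1$, combining Sommese's line bundle vanishing with the Le Potier--Akizuki--Nakano technique. First I would establish the line bundle version: if $L$ is a $k$-ample line bundle on a smooth projective variety $X$ of dimension $n$, then $\rH^i(X, \omega_X \otimes L) = 0$ for $i > k$, and more generally (the Nakano-type strengthening) $\rH^q(X, \Omega^p_X \otimes L) = 0$ whenever $p + q > n + k$. For $m \gg 0$, the morphism $\phi = \phi_{|mL|} : X \to Y \subset \PP^N$ has fibers of dimension at most $k$, and $L^{\otimes m} \cong \phi^* H$ for an ample line bundle $H$ on $Y$. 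By Koll\'ar's vanishing theorem, $R^j\phi_*\omega_X = 0$ for $j > k$, while Kodaira / Akizuki--Nakano vanishing on $Y$ (or a resolution) controls the cohomology of the base; feeding these into the Leray spectral sequence yields the stated bound.

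Next I would apply the Nakano-type line bundle version to the $k$-ample line bundle $\cO_{\PP(F)}(1)$ on $\PP(F)$, which has dimension $n + r - 1$: for every $a \ge 1$,
\[
\rH^q(\PP(F), \Omega^p_{\PP(F)} \otimes \cO_{\PP(F)}(a)) = 0 \quad \text{whenever } p + q > n + r - 1 + k.
\]
To descend the vanishing to $V$, use the relative cotangent sequence $0 \to \pi^*\Omega_V \to \Omega_{\PP(F)} \to \Omega_{\PP(F)/V} \to 0$ together with its wedge powers, which produce a filtration of $\Omega^s_{\PP(F)}$ with graded pieces $\pi^*\Omega^p_V \otimes \Omega^{s-p}_{\PP(F)/V}$. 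The Leray spectral sequence for $\pi$, combined with the standard Bott-type computation of $R^j\pi_*(\Omega^t_{\PP(F)/V} \otimes \cO_{\PP(F)}(a))$ (whose answers involve $\Sym^\bullet F$, its dual, and powers of $\det F$), then transports the vanishing on $\PP(F)$ to the Le Potier-style bound $\rH^q(V, \Omega^p_V \otimes F) = 0$ for $p + q \ge n + r + k$. Specializing $p = n$ gives the theorem.

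The main obstacle is the bookkeeping in the descent step: every intermediate cohomology term appearing in the filtration and the Leray spectral sequence must be verified to lie above the Sommese--Nakano threshold on $\PP(F)$, and the relative Bott pushforwards must conspire to isolate $F$ itself (rather than a symmetric or divided power twist) after the shift by $r-1$ coming from the relative dimension. This shift, together with the contributions of the boundary terms of the filtration, is precisely what produces the extra $r$ in the final bound $r + k$.
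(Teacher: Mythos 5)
This theorem is not proved in the paper at all: it is imported verbatim from Sommese \cite{Som78}, so there is no in-paper argument to compare yours against. That said, your outline is essentially Sommese's original proof: pass to the $k$-ample line bundle $\cO_{\PP(F)}(1)$ on $\PP(F)$ and descend through the filtration of $\Omega^{\bullet}_{\PP(F)}$ induced by the relative cotangent sequence. The descent step is correct and in fact simpler than you make it sound: after twisting by $\cO_{\PP(F)}(1)$, every graded piece $\pi^*\Omega^p_V \otimes \Omega^{t}_{\PP(F)/V}$ with $t \ge 1$ has vanishing $R^\bullet\pi_*$ in all degrees by Bott's formula on the fibers, so no symmetric or divided powers of $F$ ever enter, and one gets $R^\bullet\pi_*\bigl(\Omega^{n}_{\PP(F)} \otimes \cO_{\PP(F)}(1)\bigr) \cong \omega_V \otimes F$ concentrated in degree $0$ (here $n=\dim V$); the bound $i \ge r+k$ then falls out of the Nakano-type vanishing on $\PP(F)$ applied with $p = n$.

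The genuine gap is in your justification of that Nakano-type input, namely $\rH^q(Y, \Omega^p_Y \otimes L) = 0$ for $p + q \ge \dim Y + k + 1$ when $L$ is $k$-ample. The statement is a genuine theorem (it is the line-bundle case treated by Sommese in the same paper, and can be obtained from Girbau's vanishing for semipositive line bundles whose curvature is positive on a subspace of codimension at most $k$), but it does not follow from the argument you sketch. Koll\'ar's theorem controls $R^j\phi_*\omega_Y$, not $R^j\phi_*\Omega^p_Y$ for $p < \dim Y$ --- and it is exactly the intermediate case $p = \dim V < \dim \PP(F)$ that your descent requires, so the $\omega$-only version cannot suffice. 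Moreover, even for $p = \dim Y$, the Leray argument you describe yields vanishing for $\omega_Y \otimes \phi^*H = \omega_Y \otimes L^{\otimes m}$, not for $\omega_Y \otimes L$ itself; getting from $L^{\otimes m}$ down to $L$ needs an additional device (a cyclic covering trick, the nef form of Kawamata--Viehweg, or the metric/Bochner--Kodaira--Nakano argument). Either supply that line-bundle vanishing honestly or, as the paper does, cite it as a black box; with it in hand, the rest of your reduction goes through.
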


\begin{lemma}\label{lem:kamplenessElargeell}
Suppose that $\ell \ge 2$ and the first wall crossing is a simple one. Over $\rM(r, L)$, $\cE_{x}$ is $(g-1)\ell(r-\ell)$-ample.
\end{lemma}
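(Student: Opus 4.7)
The plan is to realize $\PP(\cE_{x})$ as the small-weight chamber of the one-parabolic-point moduli space and produce, via the variation of GIT developed in Sections \ref{ssec:VGIT}--\ref{ssec:VGITderivedcategory}, a semiample morphism whose fibers are controlled by wall-crossing centers at the critical weight $a^{\ast}=1/\ell$.

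For the identification, for $\epsilon>0$ sufficiently small we have $\PP(\cE_{x})\cong \rM(r,L,r-1,\epsilon)$, and the tautological bundle $\cO_{\PP(\cE_{x})}(1)$ corresponds to a generator $\cO(1)$ of $\Pic(\rM(r,L,r-1,\epsilon))\cong \ZZ\langle\cO(1),\Theta\rangle$. The weight-to-bundle correspondence \eqref{eqn:weighttobundle}, adapted to one parabolic point, reads $a\mapsto \cO(ra)\otimes \Theta^{1-\ell a}$, so $\cO(1)$ is proportional to the line bundle attached to $a^{\ast}=1/\ell$, which lies in $(0,1)$ precisely because $\ell\ge 2$. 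Next I would run the wall-crossing from $a=\epsilon$ toward $a^{\ast}$. Under the simple-wall hypothesis, the $\lambda_{-}$-weight of $\cO(1)$ on the fixed locus of each simple wall (computed by combining Proposition \ref{prop:thetalambdaweight} and Lemma \ref{lem:wallimplieszeroweight}) lies strictly inside the Halpern-Leistner window given by the one-parabolic-point analogs of \eqref{eqn:etam}--\eqref{eqn:etap}. Theorem \ref{thm:cohomologyidentification} then identifies $\rH^{0}(\cO(m))$ across each wall, so $\cO_{\PP(\cE_{x})}(1)^{\otimes m}$ is base-point-free for $m\gg 0$ and its linear system defines a morphism
\[
 f\colon \PP(\cE_{x})\longrightarrow \rM(r,L,r-1,1/\ell).
\]

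The fibers of $f$ concentrate on wall-crossing centers above $a^{\ast}=1/\ell$. The coprimality $\ell d\equiv 1\pmod{r}$ makes the wall $\Delta(\ell,1,\ell k)$ with $k=(\ell d-1)/r$ pass through $a^{\ast}$, and a one-parabolic-point adaptation of the $\Ext^{1}$-estimate underlying Proposition \ref{prop:fiberdim} gives $\dim\PP\Ext^{1}((E^{\mp},V^{\mp}),(E^{\pm},V^{\pm}))=s(r-s)(g-1)+O(1)$, maximized at $s=\ell$ with value $\ell(r-\ell)(g-1)$. The simple-wall hypothesis rules out competing walls giving larger fiber contributions, so $\cO_{\PP(\cE_{x})}(1)$ is $(g-1)\ell(r-\ell)$-ample, i.e., $\cE_{x}$ is $(g-1)\ell(r-\ell)$-ample on $\rM(r,L)$.

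The main technical obstacle is verifying semiampleness on $\PP(\cE_{x})$ itself (not only on a flipped model) and uniformly controlling the $O(1)$ correction in the fiber estimate. Both reduce to the strict inequality in the quantization window, and the simple-wall hypothesis is precisely what guarantees this at the first wall encountered from $\epsilon$.
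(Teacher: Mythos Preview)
Your outline lands on the right ingredients---the identification $\PP(\cE_{x})\cong\rM(r,L,r-1,\epsilon)$, the critical weight $a^{\ast}=1/\ell$ corresponding to $\cO(1)$, and the fiber $\PP\Ext^{1}$---but you have missed the key simplification and, as a result, introduced a step that does not actually work.

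The point is that $a^{\ast}=1/\ell$ is the \emph{first} wall encountered as $a$ increases from $\epsilon$ (this is \cite[Lemma~4.4]{LM21}). Hence there are \emph{no} walls strictly between $\epsilon$ and $1/\ell$, and the contraction $\pi_{-}\colon\rM(r,L,r-1,\epsilon)\to\rM(r,L,r-1,1/\ell)$ is already a morphism defined by $|m\,\cO(1)|$ for $m\gg 0$ (\cite[Lemma~4.6]{LM21}). No quantization window argument is needed to produce the semiample map; it is just the standard Mori-theoretic contraction at the boundary of the nef cone. Your proposed route---identify $\rH^{0}(\cO(m))$ across intermediate flips via Theorem~\ref{thm:cohomologyidentification} and conclude base-point-freeness on $\PP(\cE_{x})$---would not be valid in general: matching global sections across a flip does not transfer base-point-freeness back to the pre-flip model. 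Here it is harmless only because the set of intermediate walls is empty.

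Two smaller points. First, the hypothesis ``the first wall crossing is a simple one'' is not about excluding ``competing walls''; it guarantees that the wall $\Delta(\ell,e,\ell)$ at $a^{\ast}$ has the blow-up/blow-down structure so that the exceptional fiber of $\pi_{-}$ really is $\PP\Ext^{1}((E^{-},V^{-}),(E^{+},V^{+}))$. Second, the fiber dimension is not ``$s(r-s)(g-1)+O(1)$ maximized at $s=\ell$'': there is a single wall with $s=\ell$, and the one-parabolic-point analogue of \eqref{eqn:les} gives exactly $\dim\PP\Ext^{1}((E^{-},V^{-}),(E^{+},V^{+}))=(g-1)\ell(r-\ell)$, with no correction term to control. (Your wall label $\Delta(\ell,1,\ell k)$ is also garbled; it should be $\Delta(\ell,e,\ell)$ with $e$ determined by $\ell d-re=1$.)
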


\begin{proof}
Recall that $\PP(\cE_{x}) \cong \rM(r, L, r-1, \epsilon)$. The first wall-crossing arises when the parabolic weight is $1/\ell$ and the wall is of the form $\Delta(\ell, e, \ell)$ (Lemma \ref{lem:1stwallcrossing}). The associated strictly nef line bundle is $\cO_{\PP (\cE_{x})}(1)$ (Lemma \ref{lem:intersection2}). Thus, it is sufficient to compute the dimension of the exceptional fiber of $\pi_{-} : \rM(r, L, r-1, \epsilon) \to \rM(r, L, r-1, 1/\ell)$. 

For a point $p := ((E^{+}, V^{+}) \oplus (E^{-}, V^{-})) \in \rM(r, L, r-1, 1/\ell)$, $\pi_{-}^{-1}(p) = \PP \Ext^{1}((E^{-}, V^{-}), (E^{+}, V^{+}))$. We can compute its dimension, by modifying the exact sequence \eqref{eqn:les}. After a standard computation, we obtain $\dim \PP \Ext^{1}((E^{-},V^{-}), (E^{+}, V^{+})) = (g-1)\ell(r-\ell)$.
\end{proof}

\begin{lemma}\label{lem:kamplenessEtensorE}
For any two points $x_{1}, x_{2} \in X$, $\cE_{x_{1}} \otimes \cE_{x_{2}}^{*}\otimes \Theta$ is $(g-1)\ell(r-\ell)$-ample. 
\end{lemma}

\begin{proof}
First, suppose that $2 \le \ell \le r-2$. Note that $\cE_{x_{2}}^{*}\otimes \Theta$ is a normalized Poincar\'e bundle over $\rM(r, L^{*}(r))$, where $\deg L^{*}(r) = r - d$. For the wall crossing of $\PP(\cE_{x_{1}}) \cong \rM(r, L, r-1, \epsilon)$, the first wall is $\Delta(\ell, e, \ell)$ and it is a multiple wall if and only if $2\ell < r$ (Lemma \ref{lem:1stwallcrossing}). On the other hand, $\PP (\cE_{x_{2}}^{*}\otimes \Theta) \cong \rM(r, L^{*}(r), r-1, \epsilon)$ and its first wall is $\Delta(r-\ell, e', r-\ell)$ and it is a multiple wall if and only if $2(r-\ell) < r$. But since $2\ell + 2(r-\ell) = 2r$ and $\ell \nmid r$, one of these two walls is simple. Then we may apply Lemma \ref{lem:kamplenessElargeell} to compute the $k$-ampleness of one of them. By \cite[Corollary 3.5]{LN20}, we can conclude that $\cE_{x_{1}} \otimes \cE_{x_{2}}^{*}\otimes \Theta$ is (at least) $(g-1)\ell(r-\ell)$-ample.

Now suppose $\ell = 1$ ($\ell = r-1$ case is the same). By \cite[Proposition 21]{BM19}, $\cE_{x_{1}}$ is $\frac{r(r-1)}{2}g$-ample. On the other hand, for $\PP(\cE_{x_{2}}^{*}\otimes \Theta) \cong \rM(r, L^{*}(r), r-1, \epsilon)$, the first wall is $\Delta(r-1, e, r-1)$. Since $2(r-1) > r$ (because $r > 2$), this is a simple wall, so $\cE_{x_{2}} \otimes \Theta$ is $(g-1)\ell (r-\ell) = (g-1)(r-1)$-ample by Lemma \ref{lem:kamplenessElargeell}. Since $(g-1)(r-1) < \frac{r(r-1)}{2}g$, $\cE_{x_{1}} \otimes \cE_{x_{2}}^{*}\otimes \Theta$ is $(g-1)\ell(r-\ell)$-ample, too.
\end{proof}

\begin{proof}[Proof of Theorem \ref{thm:orthogonality}]
We first show Item (2). For $i \ne 0$, it follows from Theorem \ref{thm:vanishingoftwistedbundle}. From 
\[
	\rH^{0}(\rM(r, L), \cE_{x_{1}} \otimes \Theta^{-1}) \cong \rH^{0}(\rM(r, L, \be), \cO(1, 0) \otimes \Theta^{-1})
\]
and the fact that $\cO(1, 0) \otimes \Theta^{-1} = \Theta^{\frac{\ell-1}{r} - 1} \otimes \left(\cO(r, 0) \otimes \Theta^{1-\ell}\right)^{\frac{1}{r}}$ is not on $\Eff(\rM(r, L, \be))$ (because $\ell < r$), it is trivial. Thus, we obtain Item (2). Since $\cE_{x_{1}}^{*} = \cE_{x_{1}}^{*}\otimes \Theta \otimes \Theta^{-1}$ and $\cE_{x_{1}}^{*}\otimes \Theta$ is the normalized Poincar\'e bundle on $\rM(r, L^{*}(r)) \cong \rM(r, L)$, Item (1) follows from Item (2). 

We move to Item (3). By Theorem \ref{thm:Sommese} and Lemma \ref{lem:kamplenessEtensorE}, 
\[
	\rH^{i}(\rM(r, L), \cE_{x_{1}} \otimes \cE_{x_{2}}^{*}\otimes \Theta^{-1}) \cong \rH^{i}(\rM(r, L), \cE_{x_{1}} \otimes \cE_{x_{2}}^{*}\otimes \Theta \otimes \omega) = 0
\]
if $i \ge r^{2} + (g-1)\ell(r-\ell)$. Serre duality tells us that 
\[
	\rH^{i}(\rM(r, L), \cE_{x_{1}} \otimes \cE_{x_{2}}^{*}\otimes \Theta^{-1}) \cong \rH^{(r^{2}-1)(g-1)-i}(\rM(r, L), \cE_{x_{1}}^{*}\otimes \cE_{x_{2}} \otimes \Theta^{-1})^{*} = 0
\]
provided $i \le (r^{2}-1-\ell(r-\ell))(g-1) - r^{2}$. Thus, if $r^{2} + (g-1)\ell(r-\ell) \le (r^{2}-1-\ell(r-\ell))(g-1)-r^{2}+1$, we obtain the desired vanishing. This is equivalent to 
\begin{equation}\label{eqn:genusbound}
	\frac{2r^{2}-1}{r^{2}-1-2\ell(r-\ell)} \le g-1.
\end{equation}
Since $0 < \ell < r$, if $r \ge 5$, we have 
\[
	\frac{2r^{2}-1}{r^{2}-1-2\ell(r-\ell)} < \frac{2r^{2}-1}{r^{2} - 1 - r^{2}/2} = \frac{4r^{2}-1}{r^{2}-2} \le \frac{4\cdot 5^{2}-1}{5^{2}-2} = \frac{99}{23}.
\]
When $r = 3$ and $4$, a direct computation gives $17/4$ and $31/9$, respectively. So if $g \ge 6$, the inequality \eqref{eqn:genusbound} holds for all $r \ge 3$.
\end{proof}

In the proof above, the genus bound is necessary only for Item (3), which is used to prove the orthogonality of $\Phi_{\cE}(\rD^{b}(X)) \otimes \Theta$. Thus, we obtain the following weaker version for all $g \ge 2$. 

\begin{theorem}\label{thm:threefactors}
There is a semiorthogonal decomposition $\rD^{b}(\rM(r, L)) = \langle \cA', {}^{\perp}\cA' \rangle$ where $\cA' = \langle \cO, \Phi_{\cE}(\rD^{b}(X)), \Theta\rangle$. 
\end{theorem}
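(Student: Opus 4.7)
The plan is to follow the strategy of Theorem \ref{thm:SOD}, dropping the fourth factor $\Phi_{\cE}(\rD^{b}(X)) \otimes \Theta$ so as to avoid the cohomology vanishing that required $g \ge 6$. Concretely, I would verify the pairwise semiorthogonality of the three blocks $\cO$, $\Phi_{\cE}(\rD^{b}(X))$, $\Theta$ in that order; once $\cA'$ is shown to be admissible, the decomposition $\rD^{b}(\rM(r, L)) = \langle \cA', {}^{\perp}\cA'\rangle$ is automatic.

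Admissibility of the three pieces is standard. Since $\rM(r, L)$ is Fano with $\omega_{\rM(r, L)} = \Theta^{-2}$, Kodaira vanishing gives $\rH^{i}(\rM(r,L),\Theta^{-1}) = 0$ for $i > 0$; moreover $\rH^{0}(\rM(r,L),\Theta^{-1}) = 0$ because $\Theta$ is the ample generator of $\Pic(\rM(r, L)) \cong \ZZ$. Hence $\cO$ and $\Theta$ are exceptional objects. By Theorem A, $\Phi_{\cE}$ is fully faithful; as a Fourier-Mukai transform between smooth projective varieties it admits adjoints, so its image is admissible.

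Next I would reduce the three required semiorthogonality relations to cohomology vanishings. Because $\{k(x)\}_{x \in X}$ spans $\rD^{b}(X)$, the set $\{\cE_{x}\}_{x \in X}$ spans $\Phi_{\cE}(\rD^{b}(X))$, so the required conditions are:
\begin{enumerate}
\item $\rH^{\bullet}(\rM(r, L), \Theta^{-1}) = 0$ (already noted above);
\item $\rH^{\bullet}(\rM(r, L), \cE_{x}^{*}) = 0$ for every $x \in X$, i.e.\ Item (1) of Theorem \ref{thm:orthogonality};
\item $\rH^{\bullet}(\rM(r, L), \cE_{x} \otimes \Theta^{-1}) = 0$ for every $x \in X$, i.e.\ Item (2) of Theorem \ref{thm:orthogonality}.
\end{enumerate}
A glance at the proof of Theorem \ref{thm:orthogonality} confirms that Items (1) and (2) are established without invoking the bound $g \ge 6$: Item (2) combines Theorem \ref{thm:vanishingoftwistedbundle} (positive degrees) with the non-effectivity of $\cO(1, 0) \otimes \Theta^{-1}$ on $\rM(r, L, \be)$ (the degree zero case), and Item (1) is deduced from Item (2) via the isomorphism $\rM(r, L^{*}(r)) \cong \rM(r, L)$ that exchanges $\cE_{x}^{*}$ with a normalized Poincar\'e bundle on the dual moduli space.

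The main obstacle in the four-factor Theorem \ref{thm:SOD} is the third cohomology vanishing $\rH^{\bullet}(\rM(r,L),\cE_{x_{1}} \otimes \cE_{x_{2}}^{*} \otimes \Theta^{-1}) = 0$, which requires Sommese's theorem on $k$-ample bundles together with a lower bound on the genus. For Theorem \ref{thm:threefactors} this obstacle is absent, so I expect the proof to be essentially an assembly of previously established ingredients: Theorem A and the two genus-independent items of Theorem \ref{thm:orthogonality}, together with the elementary exceptionality of $\cO$ and $\Theta$.
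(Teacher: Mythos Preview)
Your proposal is correct and follows essentially the same approach as the paper: the paper's entire proof is the observation that in Theorem \ref{thm:orthogonality} only Item (3) requires the bound $g \ge 6$, so dropping the fourth block $\Phi_{\cE}(\rD^{b}(X)) \otimes \Theta$ yields the three-term decomposition for all $g \ge 2$. Your additional remarks on admissibility and the exceptionality of $\cO, \Theta$ are routine and consistent with what the paper uses implicitly.
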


\begin{remark}\label{rmk:smallgenus}
Whenever $r$ and $\ell$ satisfy \eqref{eqn:genusbound}, we have the semiorthogonal decomposition of Theorem \ref{thm:SOD}. Therefore the genus bound can be improved, if we restrict $\deg L$. For instance, if $d = 1$ (so $\ell = 1$) and $r \ge 6$, the vanishing result holds for $g \ge 4$. Our method does not seem to work for $g = 2, 3$. 
\end{remark}

\begin{remark}\label{rmk:genusboud}
For $d = 1$, the semiorthogonal decomposition in Theorem \ref{thm:SOD} was obtained in \cite[Theorem B]{BM19}. Their genus bound is weaker than ours -- for instance, for $r \ge 4$, they proved it for $g \ge 3r + 4$. 
\end{remark}


\section{ACM bundles on $\rM(r, L)$}\label{sec:ACM}

Besides the structure of $\rD^{b}(\rM(r, L))$, another immediate application of the technique we developed in this paper is a construction of a one-dimensional family of ACM bundles. 

\begin{definition}\label{def:ACM}
Let $V$ be an $n$-dimensional projective variety with an ample line bundle $A$. A vector bundle $F$ on $V$ is an \emph{ACM bundle} with respect to $A$ if $\rH^i(V, F \otimes A^{j})=0$ for every $1 \leq i \leq n-1$ and $j \in \ZZ$. An ACM bundle $F$ is \emph{Ulrich} if $\rH^0(V, F \otimes A^{-1})=0$ and $\rH^0(V, F)=\rk F \cdot \deg V = \rk F \cdot (A)^{n}$.
\end{definition}

For a smooth Fano variety of Picard rank one, it is straightforward to verify that every line bundle is ACM. It is also clear that if $F$ is ACM with respect to $A$, then $F \otimes A^{k}$ is ACM with respect to $A$ for all $k \in \ZZ$. But finding a non-trivial example of an ACM bundle is not an easy task for higher dimensional varieties. In this section, we show that $\cE_{x}$ is ACM.

\begin{remark}\label{rmk:veryample}
Many authors assume that $A$ to be very ample when they consider ACM bundles. Because the Picard number of $\rM(r, L)$ is one, Theorem \ref{thm:ACM} implies that $\cE_x$ is ACM for every very ample line bundle. On $\rM(r, L)$, $\Theta^{k}$ is known to be very ample when $k \ge r^{2}+r$ (\cite[Theorem A]{EP04}), but an optimal $k$ for the very ampleness is unknown.
\end{remark}

\begin{proof}[Proof of Theorem \ref{thm:ACM}]
By Serre duality, 
\[
	\rH^{i}(\rM(r, L), \cE_{x} \otimes \Theta^{j}) \cong \rH^{\dim \rM(r, L)-i}(\rM(r, L), \cE_{x}^{*} \otimes \Theta \otimes \Theta^{-j-3})^{*}.
\]
The vanishing for $\cE_{x} \otimes \Theta^{j}$ for $j \le -2$ follows from the vanishing for $\cE_{x}^{*} \otimes \Theta \otimes \Theta^{j}$ for $j \ge -1$. Since $\cE_{x}^{*} \otimes \Theta$ is the normalized Poincar\'e bundle over $\rM(r, L^{*}(r)) \cong \rM(r, L)$, it is sufficient to prove the vanishing for $j \ge -1$, which is Theorem \ref{thm:vanishingoftwistedbundle}.

For two different points $x_{1}, x_{2} \in X$, $\cE_{x_{1}} \ne \cE_{x_{2}}$ (\cite[Theorem]{LN05}). Thus, we obtain a one-dimensional family of ACM bundles. 
\end{proof}

\begin{remark}\label{rmk:Ulrich}
The bundle $\cE_{x}$ is not Ulrich in general. If $g = r = 2$, $h^0(\rM(r, L),\cE_x)= 4 < 8 = 2 \deg(\rM(r, L))$. It is an interesting problem to construct Ulrich bundles on $\rM(r,L)$. See \cite{CKL19} for an alternative construction of Ulrich bundles for $g=r=2$ case.
\end{remark}


\bibliographystyle{alpha}

\end{document}